 \newtheorem{theorem}{Theorem}[section]
 \newtheorem{corollary}[theorem]{Corollary}
 \newtheorem{lemma}[theorem]{Lemma}
 \newtheorem{proposition}[theorem]{Proposition}
\theoremstyle{definition}
 \newtheorem{definition}[theorem]{Definition}
 \newtheorem{remark}[theorem]{Remark}
\newcommand{\black}{\color{black}}
\def\R{\mathbb{R}}
\def\EE{\mathbb{E}}\def\E{\mathbb{E}}
\renewcommand{\qed}{\hfill$\square$}
\title[Riemann approximation of fractional integrals]{\black \small Asymptotic error distribution for the Riemann approximation of integrals driven by fractional Brownian motion}
\author{Valentin Garino}
\address{Valentin Garino, Universit\'e du Luxembourg, 
Unit\'e de Recherche en Math\'ematiques,
Maison du Nombre,
6 avenue de la Fonte,
L-4364 Esch-sur-Alzette,
Grand Duch\'e du Luxembourg}
\email{valentin.garino@uni.lu} 
\thanks{}
\author{Ivan Nourdin}
\address{Ivan Nourdin, Universit\'e du Luxembourg, 
Unit\'e de Recherche en Math\'ematiques,
Maison du Nombre,
6 avenue de la Fonte,
L-4364 Esch-sur-Alzette,
Grand Duch\'e du Luxembourg}
\email{ivan.nourdin@uni.lu} 
\thanks{}
\author{Pierre Vallois}
\address{Pierre Vallois, Universit\'e de Lorraine
Institut Elie Cartan de Lorraine, 
UMR 7502
F-54506 Vandoeuvre-l\`es-Nancy, 
France}
\email{pierre.vallois@univ-lorraine.fr} 
\thanks{}
\date{\today}
\begin{document}
\maketitle

\medskip

\begin{abstract}

We consider Riemann sum approximations of stochastic integrals with respect to the fractional Browian motion of index $H\geq \frac12$. 
We show the convergence of these schemes at first and second order. 
The processes obtained in the limit in the second case are stochastic integrals with respect to the Rosenblatt process if $H >\frac34$ and the standard Brownian motion otherwise. 
These results are obtained under the assumption that the integrand is a ``controlled'' process. 
We provide many examples of such processes, in particular fractional semimartingales and multiple Wiener-It\^o integrals.\\

\end{abstract}

\section{Introduction}

Fractional Brownian motion was introduced by Kolmogorov \cite{kolmogorov} in the 40's. 
Mandelbrot and Van Ness \cite{mandelbrot} popularized it and gave some quantitative properties.
Since then, its range of applications has been steadily growing: for example, nowadays it can serve to recreate certain natural landscapes (such as submarine floors, see \cite{PP98}) or to model rainfalls (see \cite{VBN96}). It also often serves as a model in hydrology (e.g. \cite{molz}), telecommunications (e.g. \cite{leland,mikosch}), finance (e.g. \cite{comte}) or physics (e.g. \cite{vojta}), to name but a few.
Since the explicit calculation of stochastic integrals driven by fractional Brownian motion is impossible except in very particular cases, 
 it is natural to try to approximate these integrals by Riemann sums and to study their convergence.

\medskip

In \cite{rootzen1980limit}, Rootz\'en considered the It\^o integral $\int_0^t u_sdB_s$ of an adapted integrand $u$ with respect to a standard Brownian motion $B$, and investigated the asymptotic behavior of the
approximation error $\int_0^t u_s dB_s - \int_0^t u^n_s dB$ when $u^n$ are approximating integrands (for instance, we can choose $u^n$ so that $\int_0^t u^n_s dB_s$ corresponds to the Riemann sum associated with  $\int_0^t u_s dB_s$). Using It\^o stochastic calculus, Rootz\'en \cite{rootzen1980limit} exhibits after proper normalisation a {\it stable} limit of the form $\int_0^ta_sdW_s$, with $W$ a Brownian motion {\it independent} of $B$.
As an illustration, he applied his abstract result to prove a functional central limit-type theorem in the space $\mathcal{D}_{\mathbb{R}}([0,T])$ of c\`adl\`ag functions equipped with the Skorohod topology, and with $u_s=f(B_s)$ (provided $f$ is smooth and bounded enough):
 \begin{eqnarray}
\notag
&&\sqrt{n}\left(\int_0^t f(B_s)dB_s-\!\!\!\sum_{k=0}^{\lfloor nt \rfloor-1}f(B_{\frac{k}{n}})(B_{\frac{k+1}{n}}-B_{\frac{k}{n}})\right)_{t\in[0,T]} \\
&\overset{\rm stably}{\underset{n\rightarrow\infty}{\longrightarrow}}& \left(\sqrt{\frac{1}{2}}\int_0^t f'(B_s)dW_s\right)_{t\in[0,T]}.\label{illustrationintro}
\end{eqnarray}

Rootz\'en's work  \cite{rootzen1980limit} paved the way for a new area of research on the subject and related topics. For example, we can mention multidimensional extensions (see \cite{lindberg2013error}), generalizations to the case of random discretisation times (see \cite{fukasawa2011discretization}), applications in finance (see \cite{gobet2001discrete}) and approximation schemes of stochastic differential equations (SDEs) driven by semimartingales (see \cite{jacod1998asymptotic}). The recent paper \cite{alos2018asymptotic}  provides an asymptotic expansion for the weak discretization error of It\^o's integrals.

\smallskip

Approximation schemes for SDEs driven by a fractional Brownian motion has been addressed in  \cite{hu2016rate,neuenkirch2007exact}. 
But Riemann sums approximations of stochastic integrals with respect to fractional Browian motion, as done by Rootz\'en \cite{rootzen1980limit} in the case of  the standard Brownian motion, had not yet been studied; 
the aim of this article is to fill this gap.

\medskip

In the present paper, we deal with a fractional Brownian motion $B$  of Hurst index $H\in\big[\frac12,1\big)$. 
All the processes considered in this paper will always be implicitly assumed to be measurable with respect to $B$.
Also, note that the range of $H$ includes $\frac12$ (corresponding to Brownian motion), which will allow us to compare our results with those of  \cite{rootzen1980limit}.
Our goal is to analyze the fluctuations around the approximation by Riemann sums of stochastic integrals with respect to a fractional Brownian motion. 
We will set up an approach based on two main steps.
\begin{itemize}
\item \textit{\underline{Step 1}: weighted limit theorem}. Let $(u^n)$ be a sequence of processes of the form $u^n=\sum_{k=1}^{\lfloor n\cdot\rfloor}X^n_k$ for which a functional convergence 
$
u^n\rightarrow w
$
holds.
We extend  this convergence to
$$\sum_{k=1}^{\lfloor n\cdot\rfloor}h_\frac{k}{n}X^n_k\longrightarrow\int_0^\cdot h_sdw_s$$ 
for a given class of appropriate random processes $h$, and where the nature of the integral with respect to $w$ (It\^o, Young, etc.) is chosen according to the features of $w$. 
When the sequence $(X^n_k)$ is built from the increments of a fractional Brownian motion, this type of questions has received some important contributions in recent years, see e.g. \cite{liu-tindel} and the references therein. We also mention \cite{hu2016rate}, which was actually our main inspiration for this step.

\item \textit{\underline{Step 2}: Taylor expansion}. To perform Step 1, we assume that our integrand $u$ is `controlled' by the increments of the integrator $B$, in the sense that there is a process $h$ and a remainder $r$ such that $u_t=u_s+h_s(B_t-B_s)+r_{s,t}$ for any $t\geq s$. 
These types of Taylor-like expansions are strongly related with the notion of controlled paths studied in the rough path theory, see \cite{gubinelli2004controlling}.
We will characterize precisely the set of such processes below.
\end{itemize}

\medskip

The statement of the two main Theorems \ref{main1} and \ref{main2} require the introduction of notations:

{\bf (i)} a $d$-dimensional fractional Brownian motion $B$ (for some $d\in\mathbb{N}^*$) of Hurst index $H\in\big[\frac12,1\big)$ (as already mentioned, 
all the processes considered in this paper are implicitly assumed to be measurable with respect to $B$);

{\bf (ii)} an $m$-dimensional process $u$, with the property that the stochastic integrals $\int_0^t u^i_sdB^j_s$, $1\leq i\leq m$, $1\leq j\leq d$, are well-defined. 
At this stage, we note that the integrals $\int u^idB^j$ must be understood in the Young sense  when $H>\frac12$ and in the It\^o sense when $H=\frac12$. Precise statements will be given later on.

{\bf (iii)} our quantity of interest: for $t\in [0,T],\,1\leq i\leq m, \,1\leq j\leq d$,
\begin{equation}\label{mnij}
M^{n,i,j}_t=n^{2H-1}\left(\int_0^tu^i_sdB^j_s-\sum_{k=0}^{nt_n}u^i_\frac{k}{n}\left(B^j_{\frac{k+1}{n}\wedge t}-B^j_\frac{k}{n}\right)\right).
\end{equation}
In (\ref{mnij}) and in all what follows,  we write $t_n=\frac{\lfloor nt\rfloor}{n}$ when $t\in\R_+$ and $n\in\mathbb{N}\setminus\{0\}$.

{\bf (iv)} the correlation function: for all $t\geq s$ and all  $y\geq x$,
\begin{eqnarray*}
r_H(s,t,x,y)&=&\mathbb{E}\left[(B^1_t-B^1_s)(B^1_y-B^1_x)\right]\\
&=&\frac{1}{2}\left(|t-x|^{2H}+|s-y|^{2H}-|s-x|^{2H}-|t-y|^{2H}\right);
\end{eqnarray*}

{\bf (v)} the rate function at zero
\begin{equation*}\kappa_H(v):=\left\{
      \begin{array}{cll}
        \sqrt{v} &\textit{ if } H\in[\frac12,\frac34)\\
        \sqrt{v\ln{\frac{1}{v}}}& \textit{ if } H=\frac{3}{4}\\
        v^{2-2H} &\textit{ if } H\in(\frac34,1)\\
      \end{array}
    \right.,\quad v\in(0,1];\label{kappa}
    \end{equation*}

{\bf (vi)} the rate function at infinity  
\begin{equation*}\nu_H(n):=\left\{
      \begin{array}{cll}
        \sqrt{n} &\textit{ if } H\in[\frac12,\frac34)\\
        \sqrt{n/\ln{n}} & \textit{ if } H=\frac{3}{4}\\
        n^{2-2H}  &\textit{ if } H\in(\frac34,1)\\
      \end{array}
    \right.,\quad n\geq 1.
    \label{nu}
    \end{equation*}

In addition, we  assume that the process $u$ considered in point {\bf (ii)} satisfies a {\it structural condition}, 
that we describe now.
Set 
\begin{eqnarray*}
f_1(s,t,x,y)&=&|t-s|^{2H-1}|x-y|^{2H-1}r_H(s,t,x,y);\\
 f_2(s,t,x,y)&=&f_1(s,t,x,y)\kappa_H(|t-s|)\kappa_H(|x-y|).
\end{eqnarray*}
We introduce the two following spaces $\mathbb{C}_1$ and $\mathbb{C}_2$ of \textit{pseudo-controlled paths}.
\begin{definition}\label{defpseudo}
{\it
Fix $a\in\{1,2\}$.
We say that the pair $(u,P)$ belongs to $\mathbb{C}_a$  if:
\begin{itemize}
\item $P=(P^{i,j}_t)_{t\in[0,T],1\leq i\leq m,1\leq j\leq d}$ is an $(m\times d)$-dimensional  process ;
\item $\int_s^t u^i_rdB^j_r$ is well-defined for any $1\leq i\leq m$ and $1\leq j\leq d$; 
\item \begin{equation}\label{pseudo}\mathbb{E}\left[L_{s,t}^{i,j}\,L_{x,y}^{i,j}\right]=o(f_a(s,t,x,y))\end{equation} for all $1\leq i\leq m$ and $1\leq j\leq d$, uniformly on $(s,t,x,y)\in[0,T]^4$ such that $s\leq t$ and $x\leq y$ as $|t-s|+|x-y|\rightarrow 0$,
where
\begin{equation}
L_{s,t}^{i,j}=\int_s^t \left\{u^i_r-u^i_s-\sum_{k=1}^dP^{i,k}_s(B^k_r-B^k_s)\right\}dB^j_r.
\label{Lij}
\end{equation}
\end{itemize}

}
\end{definition}
We note the obvious inclusion $\mathbb{C}_2\subset\mathbb{C}_1$.
We give two examples to understand Definition \ref{defpseudo}.
For the first one, we consider the case where each component $u^i$ of $u$ is a ``fractional semimartingale'', namely
\begin{equation*}\label{class1}
u^i_t=u^i_0 + \sum_{j=1}^d\int_0^t a^{i,j}_s dB^j_s + \int_0^t b^i_s ds, \quad t\in [0,T].
\end{equation*}
Then, under certain assumptions on $a$ and $b$ (see Section \ref{sec:Controlled paths} for precise statements), the pair $(u,a)$ belongs to $\mathbb{C}_2$ with $a=P$.

For the second one, we assume that $m=d=1$ (for simplicity) and that $u$ has the form of a multiple Wiener-It\^o integral of order $q\geq 1$; then, with $P_s=D_su_s$ (where $D$ indicates the Malliavin derivative) 
and under some conditions, the pair $(u,P)$ belongs to $\mathbb{C}_2$, see Section \ref{MWI} for precise statements.

\medskip

We can now state our two main results.
The framework of Theorem \ref{main1} is general (assuming that the pair $(u, P)$ belongs to $\mathbb{C}_1$ and satisfies other technical conditions)
and concerns the convergence of $M^{n,i,j}$ as $n\to\infty$ in {\it probability}, towards an identified limit.
The situation where $H>\frac12$ differs significantly from $H = \frac12 $, because in this latter case $M^{n,i,j}$  converges {\it in law} (but not in probability, because of the creation of an independent alea, see e.g. (\ref{illustrationintro})).

\begin{theorem}\label{main1} (First order convergence)
Fix $H\in(\frac{1}{2},1)$ and let $(u,P)\in\mathbb{C}_1$ be such that  $P$ is a.s.\! continuous and satisfies $\mathbb{E}\left[\|P\|^{2+\gamma}_{\infty}\right]<+\infty$ for some $\gamma>0$. (Here and throughout the paper, we write $\|\cdot \|_{\infty}$ to indicate the uniform norm over $[0,T]$.)
Then, uniformly on $[0,T]$ in probability,
\begin{equation}
\left\{M_\cdot^{n,i,j}\right\}_{1\leq i\leq m,1\leq j\leq d}\underset{n\to\infty}{\longrightarrow}\left\{\frac{1}{2}\int_0^\cdot P^{i,j}_sds\right\}_{1\leq i\leq m,1\leq j\leq d}.
\label{ordre1}
\end{equation}
Moreover, this convergence also holds in $L^2(\Omega)$ for any fixed $t\in [0,T]$.
\end{theorem}

Theorem \ref{main1} give sufficient conditions for (\ref{ordre1}) to take place.  
These conditions are however not necessary: we 
develop in Section \ref{sec3.4} an example where the assumptions of Theorem \ref{main1} are not satisfied whereas the convergence 
(\ref{ordre1}) holds.

Let us now study the fluctuations of $M_\cdot^{n,i,j}$ around its limit.

\begin{theorem}[Second order convergence]\label{main2}
Fix $H\in[\frac12,1)$, and let $Z=(Z^{k,j})_{1\leq k,j\leq d}$ (resp. $W=(W^{k,j})_{1\leq k,j\leq d}$) denote the matrix-valued Rosenblatt process measurable with respect to $B$ (resp. the matrix-valued Brownian motion independent from $B$) constructed in Section \ref{rosenblatt-and-brownian}.
\smallbreak (A)  {\rm [non-Brownian case $H>\frac12$]} Assume $(u,P)\in \mathbb{C}_2$, $u$ is $\alpha$-H\"older continuous for some $\alpha>1-H$ and $P$ is $\beta$-H\"older continuous over $[0,T]$ for some $\beta>\frac{1}{2}$.
\begin{itemize}
\item  If $\frac{1}{2}<H\leq\frac{3}{4}$ then, stably in $\mathcal{C}_{\mathbb{R}^{m\times d}}([0,T])$,
\begin{eqnarray*}
&&\left\{\nu_H(n)\left(M_{\cdot}^{n,i,j}-\frac{1}{2}\int_0^\cdot P^{i,j}_sds\right)\right\}_{1\leq i\leq m,1\leq j\leq d}\\
&&\hskip3cm\underset{n\to\infty}{\longrightarrow} \left\{\sum_{k=1}^{d}\int_0^{\cdot}P_s^{i,k}dW_s^{k,j}\right\}_{1\leq i\leq m,1\leq j\leq d},
\end{eqnarray*}
where the integrals in the right-hand side are understood as Wiener integrals.

\item   If $\frac{3}{4}<H<1$, assume in addition that
$\sum_{j=1}^d\sum_{i=1}^m\E\|P^{i,j}\|_{\beta}^{2+\gamma}<\infty$  for some $\gamma>0$
where, here and throughout the paper, $\|\cdot\|_\beta$ indicates the usual $\beta$-H\"older seminorm (see also (\ref{thetaholder})).
 Then, uniformly on $[0,T]$ in probability,
\begin{eqnarray*}
&&\left\{\nu_H(n)\left(M_{\cdot}^{n,i,j}-\frac{1}{2}\int_0^\cdot P^{i,j}_sds\right)\right\}_{1\leq i\leq m,1\leq j\leq d}\\
&&\hskip3cm \underset{n\to\infty}{\longrightarrow}  \left\{\sum_{k=1}^{d}\int_0^{\cdot}P_s^{i,k}dZ_s^{k,j}\right\}_{1\leq i\leq m,1\leq j\leq d},
\end{eqnarray*}
where the integrals in the right-hand side are understood as Young integrals. Moreover, this convergence also holds in $L^2(\Omega)$ for any fixed $t\in [0,T]$.
\end{itemize}
\smallbreak (B) {\rm [Brownian case $H=\frac12$]} Assume  that $(u,P)\in\mathbb{C}_2$, that $u$ and $P$ are progressively measurable, and that $P$ is a.s.\! piecewise continuous with $\mathbb{E}\left[\|P\|^{2+\gamma}_{\infty}\right]<+\infty$ for some $\gamma>0$. Then, stably in $\mathcal{C}_{\mathbb{R}^{m\times d}}([0,T])$,
\begin{equation*}\left\{\nu_H(n)M_{\cdot}^{n,i,j}\right\}_{1\leq i\leq m,1\leq j\leq d}\underset{n\to\infty}{\longrightarrow} \left\{\sum_{k=1}^{d}\int_0^{\cdot}P_s^{i,k}dW_{s}^{k,j}\right\}_{1\leq i\leq m,1\leq j\leq d},
\label{Brownian}
\end{equation*}
where the integrals in the right-hand side are understood as Wiener integrals.
\end{theorem}

In Theorem \ref{main2},
we could have considered non-uniform or even random subdivisions (like done in \cite{fukasawa2011discretization} in the semimartingale context) but this would have led to significant technical complications due to the non-stationarity of the resulting sequence of increments. 
Similarly, we could also have replace the fractional Brownian motion by a
general Gaussian processes with a covariance function assumed to behave locally as that of the fractional Brownian motion.\\

The rest of the paper is organized as follows. 
Section 2 contains some reminders and useful results about Malliavin calculus and fractional integration. 
In Section 3, we discuss in details some examples. 
Finally, the proofs of the main results are given in Section 4.

\bigskip

\section{Preliminaries}

\subsection{Notation}
In the sequel, $\mathbb{N}$ (resp $\mathbb{N}^*$) will denote the space of nonegative (resp strictly positive) integers, $\mathcal{C}^k([0,T])$ (resp $\mathcal{C}_b^k([0,T])$) the space of $k$-times continuously differentiable functions (resp $k$-times continuously differentiable with bounded derivatives) over $[0,T]$, and $\mathcal{C}^{\theta}([0,T])$ the space of $\theta$-H\"older continuous functions (with $\theta\in (0,1)$) endowed with the $\theta$-H\"older seminorm,  i.e
\begin{equation} \|f\|_{\theta}=\sup_{0\leq s<t\leq T}\frac{|f(t)-f(s)|}{|t-s|^\theta}.
\label{thetaholder}
\end{equation}
We also consider the space
$\mathcal{C}_{\mathbb{R}^p}([0,T])$ of functions $[0,T]\rightarrow\mathbb{R}^p$ endowed with the norm $\|\cdot\|_{\infty}$ of uniform convergence over $[0,T]$, the space $\mathcal{D}_{\mathbb{R}^p}([0,T])$ 
of c\`adl\`ag functions endowed with the Skorokod topology $J_1$ and, for $p>0$, the space $L^p(\Omega)$ of random variables endowed with the $L^p(\Omega)$-norm $\|\cdot\|_p$.

\subsection{Reminders of Malliavin calculus}
This section is a condensed summary of some notions presented in  \cite{nourdin2012normal,nualart2006malliavin,pipiras2001classes}.
It is the occasion to fix the notation  used in the paper.
For more details or missing proofs, we refer the reader to the aforementioned references.

Starting from now, we fix once for all an horizon time $T>0$ and a complete filtered probability space $\left(\Omega, \left(\mathfrak F_t\right)_{t\in[0,T]}, \mathfrak F=\mathfrak F_T, \mathbb{P}\right)$. We consider a $d$-dimensional fractional Brownian motion $(B_t)_{t\in [0, T]}=(B^1_t,\ldots, B^d_t)_{t\in [0, T]}$ defined on $\Omega$.  We assume that the filtration $\left(\mathfrak F_t\right)_{t\in[0,T]}$ is generated by $B$.

Let $\mathcal B$ be the Gaussian space spanned by the (one-dimensional) fractional Brownian motion $B^{1}$. Let $\mathcal E$ be the linear space of step functions over $[0,T]$ and let $\mathcal H$ be the Hilbert space obtained as the completion of $\mathcal E$ with respect to the inner product induced from $B^{1}$:
\begin{equation}\langle\mathbb{I}_{[0,t]},\mathbb{I}_{[0,s]}\rangle_\mathcal{H}=\mathbb{E}[B^{1}_tB^{1}_s], \quad 0\leq s,t\leq T.\notag\end{equation}
The linear map defined on $\mathcal E$ by $\Phi: \mathbb{I}_{[0,t]}\rightarrow B^1_t$ is  an isometry from $(\mathcal E, \langle.,.\rangle_{\mathcal H})$ to $(\mathcal B, \mathbb{E}[.,.])$, and can thus be extended to an isometry from the whole space $\mathcal H$.

For $H=\frac{1}{2}$, we have $\mathcal H=L^2([0,T])$. When $H>\frac{1}{2}$, it is well known that $\mathcal H$ contains distributions, and therefore is not a subspace of some convenient functional space, see \cite{pipiras2001classes}. This is why we introduce the subspace $|\mathcal{H}|$ of $\mathcal H$, which is defined as the set of measurable functions $f:[0,T]\to\R$ such that $\int_{[0,T]^2}|f(x)||f(y)||x-y|^{2H-2}dxdy<+\infty$. From \cite{pipiras2001classes}, we have that $(|\mathcal H|,\|\cdot\|_{|\mathcal H|})$ is a Banach space with respect to the norm $\|\cdot\|_{|\mathcal{H}|}$, defined as
$$
\|f\|^2_{|\mathcal{H}|}=c_H\int_{[0,T]^2}|f(x)||f(y)||x-y|^{2H-2}dxdy,$$
with $c_H=H(2H-1)$.
We observe that $\|f\|_{|\mathcal{H}|}\leq \|f\|_{\mathcal{H}}$ for all $f\in |\mathcal{H}|$.

Still for $H>\frac12$, we define $|\mathcal H|^{\otimes p}$, $p\in\mathbb{N^*}$, to be the Banach space of measurable functions $f:[0,T]^p\to\R$ such that
\begin{equation}
\int_{[0,T]^{2p}}|f(x_1,\ldots,x_p)||f(y_1,\ldots,y_p)|\prod\limits_{i=1}^p|x_i-y_i|^{2H-2}dx_idy_i<+\infty.
\notag\end{equation}

Let $n\in\mathbb{N^*}$ and let $\mathcal S_n$ be the space of infinitely differentiable functions
$f:\mathbb{R}^{nd}\rightarrow \mathbb{R}$ such that $f$ and all its derivatives have at most polynomial growth. 
 We consider the \textit{Schwartz space} $\mathcal C$ composed of all cylindrical random variables, that is, of all random variables $F$ of the form
$F=f(B_{t_1},\ldots,B_{t_n})$, with $n\in\mathbb{N^*}$, $f\in\mathcal S_n$, and $t_1,\ldots,t_n\in[0,T]$.

The $p$th-order Malliavin derivative of $F\in \mathcal{C}$  is the element 
$$D^pF=\{D^{p,j_1,\ldots,j_p}_{l_1,\ldots,l_p}F:\,l_1,\ldots,l_p\in[0,T]\}_{1\leq j_1,\ldots,j_p\leq d}$$ 
belonging to $\cap_{r\geq 1}L^r(\Omega,\mathcal{H}^{\otimes p^2\times d})$ defined as
\begin{equation}D^{p,j_1,\ldots,j_p}_{l_1,\ldots,l_p}F=\sum_{k_1,\ldots,k_p=1}^n\frac{\partial^pf}{\partial x^{k_1,j_1}\ldots\partial x^{k_p,j_p}}(B_{t_1},\ldots,B_{t_n})\prod_{i=1}^p\mathbb{I}_{[0,t_{k_i}]}(l_i).
\notag\end{equation}

Since these operators are closable in $L^r(\Omega,\mathcal{H}^{\otimes p^2\times d})$ for all $r\geq 1$, we can consider the \textit{Sobolev space} $\mathbb{D}^{p,r}$ as the closure of $\mathcal C$ with respect to the norm 
\begin{equation}
\|F\|^r_{\mathbb{D}^{p,r}}=\mathbb{E}[|F|^r]+\sum\limits_{m=1}^p\sum_{j_1,\ldots,j_m=1}^d\mathbb{E}\left[\|D^{m,j_1,\ldots,j_m}F\|_{\mathcal H^{\otimes m}}^r\right].\notag\end{equation}

In the same way, it is possible to define the Malliavin derivative for step processes $u$ of the form
$u=\sum\limits_{i=0}^{n-1}F_i\mathbb{I}_{[t_i,t_{i+1}]}$
(where $n\in\mathbb{N^*}$, $t_0=0,t_1,\ldots,t_n\in [0,T]$ and $F_1,\ldots,F_n\in \mathcal C$),
and to consider the associated spaces $\mathbb{D}^{p,r}(\mathcal H)$. 
In order to only deal with functions (and not distributions), we consider the subspace $\mathbb{D}^{p,r}(|\mathcal H|)$ of $\mathbb{D}^{p,r}(\mathcal H)$, which
is by definition the set of 
$u\in \mathbb{D}^{p,r}(\mathcal H)$ that are such that $u\in |\mathcal H|$ a.s.\!, $D^1u\in (|\mathcal H|^{\otimes 2})^d$ a.s.\!,  $\ldots$, $D^pu\in (|\mathcal H|^{\otimes p+1})^{pd}$ a.s.\!. 
This subspace is endowed with the norm 
\begin{equation}
\|u\|^r_{\mathbb{D}^{p,r}(|\mathcal H|)}
=\mathbb{E}\|u\|^r_{|\mathcal H|}+
\sum\limits_{m=1}^p\sum_{j_1,\ldots,j_m=1}^d\mathbb{E}\left[\|D^{m,j_1,\ldots,j_m}u\|_{|\mathcal H|^{\otimes m}}^r\right].\notag
\end{equation}

Let $u\in L^2(\Omega,\mathcal H)$ be such that $|\mathbb{E}[\langle D^{1,j}F,u\rangle_{\mathcal H}]|\leq K_{u}\sqrt{\mathbb{E}[F^2]}$ for all $F\in \mathcal C$ and $j\in\{1,\ldots,d\}$, for some constant $K_{u}$ 
depending only on $u$.  
We then say that $u$ belongs to the domain ${\rm Dom}(\delta^{1,j})$, and we define the Skorohod integral $\delta^{1,j}$ as the adjoint of $D^{1,j}$, that is, $\delta^{1,j}(u)$ is the uniquely determined random variable in $L^2(\Omega)$  verifying the duality relationship: 
\begin{equation}
\mathbb{E}[\langle D^{1,j}F,u \rangle_{\mathcal H}]=\mathbb{E}[F\delta^{1,j}(u)]  \mbox{ for all } F\in\mathbb{D}^{1,2}.
\end{equation}

In the same way, if $u$ is an element of $L^2(\Omega, \mathcal H^{\otimes p})$ ($p\geq 2$) we define the operator $\delta^p=
(\delta^{p,j_1,\ldots,j_p})_{1\leq j_1,\ldots,j_p\leq d}$ as the adjoint of $D^p=(D^{p,j_1,\ldots,j_p})_{1\leq j_1,\ldots,j_p\leq d}$ through the identity:
\begin{equation}
\mathbb{E}[\langle D^{p,j_1,\ldots,j_p}F,u\rangle_{\mathcal H^{\otimes p}}]=\mathbb{E}[F\delta^{p,j_1,\ldots,j_p}(u)]  \mbox{ for all } F\in\mathbb{D}^{p,2}.
\notag\end{equation}
We can show that $\mathbb{D}^{p,2}(\mathcal H)\subset {\rm Dom}(\delta^p)$.  

The following two  results will be also useful. The first one is a straightforward consequence of the Hardy-Littlewood-Sobolev inequality (see \cite[Theorem 6]{beckner}), whereas the second one corresponds to 
\cite[Proposition 1.3.1]{nualart2006malliavin}.
\begin{proposition}\label{boundsSkorokhod}
\begin{enumerate}
\item Fix an integer $k\geq 1$. There exists $M>0$ such that, for all $u\in L^2(\Omega,L^2([0,T]^k))$,
\begin{equation}
\mathbb{E}\left[\|u\|_{|\mathcal H|^{\otimes k}}^2\right]\leq M\mathbb{E}\left[\|u\|^2_{L^2([0,T]^k)}\right].
\label{embed}
\end{equation}
\item For all $u,v\in \mathbb{D}^{1,2}(\mathcal H)$ and $j\in\{1,\ldots,d\}$, we have
\begin{equation}
\mathbb{E}[\delta^{1,j}(u)\delta^{1,j}(v)]= \mathbb{E}\left[\langle u,v\rangle_{\mathcal H}\right]+\mathbb{E}\left[\langle D^{1,j}_\cdot u_{\cdot\cdot}, D^{1,j}_{\cdot} v_{\cdot\cdot}\rangle_{\mathcal H\otimes\mathcal H}\right].
\label{Meyer}
\end{equation}
\end{enumerate}
\end{proposition}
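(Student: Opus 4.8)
The plan is to prove the two items separately; each is a short consequence of a classical inequality, in the spirit of the references cited in the statement.

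\textbf{Item (1).} First I would remove the randomness by a pointwise Cauchy--Schwarz inequality: since $\mathbb{E}[|u(x)||u(y)|]\le g(x)g(y)$ with $g(x):=\mathbb{E}[u(x)^2]^{1/2}$, and $g\in L^2([0,T]^k,\lambda)$ with $\|g\|^2_{L^2([0,T]^k,\lambda)}=\mathbb{E}[\|u\|^2_{L^2([0,T]^k,\lambda)}]$, it suffices to establish the deterministic bound $\|g\|^2_{|\mathcal H|^{\otimes k}}\le M\,\|g\|^2_{L^2([0,T]^k,\lambda)}$. If $H=\frac12$ then, by convention, $|\mathcal H|^{\otimes k}$ coincides with $L^2([0,T]^k,\lambda)$ and the bound holds with $M=1$; so assume $H>\frac12$ and set $\lambda:=2-2H\in(0,1)$. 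Writing out $\|g\|^2_{|\mathcal H|^{\otimes k}}=c_H^k\int_{[0,T]^{2k}}g(x)g(y)\prod_{i=1}^k|x_i-y_i|^{-\lambda}\,dx\,dy$, I would apply the one-dimensional Hardy--Littlewood--Sobolev inequality of \cite[Theorem 6]{beckner} successively in each pair of variables $(x_i,y_i)$, using the conjugate exponent $p=1/H\in(1,2)$ chosen so that $\tfrac2p+\lambda=2$. After the $i$-th step the remaining integrand involves $g$ only through its $L^p$-norm in its first $i$ coordinates, and since iterating $L^p$-norms coordinate by coordinate with a fixed exponent simply reproduces the full norm $\|g\|_{L^p([0,T]^k)}$ (by Fubini), one obtains $\|g\|^2_{|\mathcal H|^{\otimes k}}\le (c_HC_H)^k\|g\|^2_{L^p([0,T]^k)}$. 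Finally, since $[0,T]^k$ has finite Lebesgue measure and $p\le2$, Hölder's inequality yields $\|g\|_{L^p([0,T]^k)}\le T^{k(H-1/2)}\|g\|_{L^2([0,T]^k)}$, which gives the claim with $M=(c_HC_H)^kT^{k(2H-1)}$.

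\textbf{Item (2).} This is the classical $L^2$-estimate for the divergence operator. I would first prove it for $u$ in the dense subset $\mathcal C(\mathcal H)$ of $\mathbb{D}^{1,2}(\mathcal H)$ and then pass to the limit using the closability of $\delta^{1,(j)}$ recalled above. For such $u$ one has the commutation relation $D^{1,(j)}_t\,\delta^{1,(j)}(u)=u_t+\delta^{1,(j)}\big(D^{1,(j)}_t u\big)$; plugging it into $\mathbb{E}[|\delta^{1,(j)}(u)|^2]=\mathbb{E}[\langle u,D^{1,(j)}\delta^{1,(j)}(u)\rangle_{\mathcal H}]$ gives
\[
\mathbb{E}[|\delta^{1,(j)}(u)|^2]=\mathbb{E}[\|u\|^2_{\mathcal H}]+\mathbb{E}\big[\langle u_\cdot,\delta^{1,(j)}(D^{1,(j)}_\cdot u)\rangle_{\mathcal H}\big],
\]
and a further application of the duality identity rewrites the last term as $\mathbb{E}[\langle D^{1,(j)}_\cdot u_\bullet,\,D^{1,(j)}_\bullet u_\cdot\rangle_{\mathcal H\otimes\mathcal H}]$, i.e. the pairing obtained from $D^{1,(j)}_\cdot u_\cdot\in\mathcal H\otimes\mathcal H$ by transposing its two $\mathcal H$-arguments. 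Since transposition is an isometry of $\mathcal H\otimes\mathcal H$, Cauchy--Schwarz bounds this term by $\mathbb{E}[\|D^{1,(j)}_\cdot u_\cdot\|^2_{\mathcal H\otimes\mathcal H}]$, which is the desired inequality; it coincides with \cite[inequality (1.47)]{nualart2006malliavin}.

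\textbf{Main obstacle.} Neither item hides a genuine difficulty: the proposition is essentially a repackaging of two known inequalities. The only points requiring care are the bookkeeping in the iterated application of Hardy--Littlewood--Sobolev for $k\ge2$ in Item (1) — one must check that the separated $x$- and $y$-blocks recombine correctly into a single $L^p([0,T]^k)$-norm — and, in Item (2), the justification of the commutation relation and of the passage to the limit from cylindrical to general $u\in\mathbb{D}^{1,2}(\mathcal H)$, both of which are routine given the closability statements already recorded.
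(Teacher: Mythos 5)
Your proof is correct and matches the paper's intent: the paper gives no argument of its own and simply attributes item (1) to the Hardy--Littlewood--Sobolev inequality of \cite[Theorem 6]{beckner} and item (2) to \cite[inequality (1.47)]{nualart2006malliavin}, which are exactly the two inequalities you reproduce (the reduction to the deterministic kernel bound via Cauchy--Schwarz, the iterated one-dimensional HLS with exponent $p=1/H$ followed by H\"older on the finite measure space, and the standard commutation/duality computation for the divergence). The only cosmetic remark is that you overload the symbol $\lambda$, which the paper reserves for Lebesgue measure.
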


\subsection{Multiple Wiener-It\^o integrals}
Throughout all this section, we assume for simplicity that the underlying fractional Brownian motion is one-dimensional, i.e. that $d=1$.
We write $D^k$ (resp. $\delta^k$) instead of $D^{k,1,\ldots,1}$ (resp. $\delta^{k,1,\ldots,1}$).

When the process $u$ is {\it deterministic} in $\mathcal H^{\otimes{k}}$, its Skorohod integral $\delta^k(u)$ is called the \textit{$k$th-order Wiener-It\^o integral} of $u$. 
If $\widetilde u$ denotes the symmetrization of $u$ (see the footnote \footnote{If $\{e_j\}_{j\geq 1}$ denotes an orthonormal basis of $\mathcal H$ and if $u$ is given by $u=\sum_{j_1,\ldots,j_k\geq 1}a_{j_1,\ldots,j_k}e_{j_1}\otimes\ldots\otimes e_{j_k}$, then $\widetilde u=\frac{1}{k!} \sum_\sigma \sum_{j_1,\ldots,j_k\geq 1}a_{j_1,\ldots,j_k}e_{j_{\sigma(1)}}\otimes\ldots\otimes e_{j_{\sigma(k)}}$, where the first sum runs over all permutation $\sigma$ of $\{1,\ldots,k\}$.}), we have $\delta^k(u)=\delta^k(\widetilde u)$; we can therefore assume without loss of generality that $u$ is symmetric.
In what follows, we denote by $\mathcal H^{\odot{k}}$ the set of symmetric elements in $\mathcal H^{\otimes{k}}$.

The following statement summarizes what is needed about multiple Wiener-It\^o integrals in this paper.
We refer e.g.\!\! to \cite{nourdin2012normal} for the proofs.

\begin{proposition}\label{multipleWiener}
\begin{enumerate}
\item (Isometry) For all integers $k,l\geq 1$, all $f\in\mathcal{H}^{\odot k}$ and all $g\in\mathcal{H}^{\odot l} $,
\begin{equation*}
\mathbb{E}[\delta^k(f)\delta^l(g)]=k!\langle f,g\rangle_{\mathcal{H}^{\otimes k}}\mathbb{I}_{\{k=l\}}.\end{equation*}
\item (Hypercontractivity) For all $r\geq 2$ and all integer $k\geq 1$, there exists $C_{k,r}>0$ such that, for all $f\in\mathcal{H}^{\odot k}$,
\begin{equation*}
\mathbb{E}\left[|\delta^k(f)|^r\right]\leq C_{k,r}\mathbb{E}[|\delta^k(f)|^2]^\frac{r}{2}.
\end{equation*}
\item (Malliavin derivative) If $u_s=\delta^k(f(.,s))$ with $f\in \mathcal H^{\otimes (k+1)}$ symmetric in the $k$ first variables, then $u\in\mathbb{D}^{1,2}(\mathcal H)$, with
\begin{equation*}
D_su_t=k\delta^{k-1}(f(.,t,s)).
\end{equation*}
\item (Product formula) Fix $f\in\mathcal H^{\odot k}$ and $g\in\mathcal H^{\odot l}$ and, as usual, let $\otimes_r$ (resp. $\widetilde{\otimes_r}$) denote the contraction operator (resp. the symmetrization of the contraction operator) of order $r$, see \cite[Appendix B]{nourdin2012normal} for a precise definition.
 Then,
\begin{equation*}
\delta^k(f)\delta^l(g)=\sum_{r=0}^{k\wedge l}r!\binom{k}{r}\binom{l}{r}f\delta^{k+l-2r}(f\widetilde{\otimes_r}g).
\label{product}
\end{equation*}
\end{enumerate}
\end{proposition}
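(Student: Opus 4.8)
The plan is to treat the four items as a package of standard facts about isonormal Gaussian processes and multiple Wiener--It\^o integrals, and to deduce them for our particular $\mathcal H$ (the Hilbert space attached to a one-dimensional fractional Brownian motion) from the general abstract theory rather than re-derive everything from scratch. The key observation is that the pair $(\mathcal H, B)$, where $B$ is viewed through the isometry $\Phi\colon \mathbb I_{[0,t]}\mapsto B_t$, is an isonormal Gaussian process in the sense of Nualart's book; therefore every structural result about Wiener chaos over an abstract separable Hilbert space applies verbatim. Concretely, one writes $\delta^k(f) = I_k(f)$ for the $k$-th multiple integral and checks that the four claims are exactly the isometry formula, the hypercontractivity estimate on a fixed chaos, the chaos differentiation rule, and the multiplication formula, all of which are proven in \cite{nourdin2012normal}.

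First I would record the isometry (item 1). For $f\in\mathcal H^{\odot k}$ and $g\in\mathcal H^{\odot l}$ the orthogonality of distinct chaoses gives $\mathbb E[I_k(f)I_l(g)]=0$ when $k\neq l$, while for $k=l$ one has $\mathbb E[I_k(f)I_k(g)] = k!\,\langle f,g\rangle_{\mathcal H^{\otimes k}}$; this is immediate from the definition of $\delta^k$ as the adjoint of $D^k$ combined with the action of $D^k$ on $I_k(f)$, or simply quoted from the reference. Item 2 (hypercontractivity) follows because the Ornstein--Uhlenbeck semigroup is hypercontractive: for a random variable living in a finite sum of chaoses up to order $k$, all $L^r$ norms ($r\geq 2$) are equivalent to the $L^2$ norm with a constant $C_{k,r}$ depending only on $k$ and $r$; applying this to $F=I_k(f)$, which lies in the $k$-th chaos, yields the stated bound. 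For item 3 I would use that if $u_s=\delta^k(f(\cdot,s))$ with $f$ symmetric in its first $k$ arguments and $f\in\mathcal H^{\otimes(k+1)}$, then differentiating the chaos expansion term by term gives $D_s u_t = k\,\delta^{k-1}(f(\cdot,s,t))$; the membership $u\in\mathbb D^{1,2}(\mathcal H)$ is then checked by bounding $\mathbb E\|u\|_{\mathcal H}^2$ and $\mathbb E\|Du\|_{\mathcal H\otimes\mathcal H}^2$ via the isometry of item 1. Item 4 is the classical product formula for multiple integrals, expressing $I_k(f)I_l(g)$ as a sum of multiple integrals of the symmetrized contractions $\widetilde{f\otimes_r g}$; again this is proved in \cite{nourdin2012normal} by induction on $k+l$ using the relation between $\delta$, $D$ and the divergence.

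I do not anticipate a genuine obstacle here, since all four statements are textbook results; the only point requiring a little care is that, because $\mathcal H$ is in general not a function space when $H>\tfrac12$, one must make sure the contractions $f\otimes_r g$ in item 4 and the ``partial'' integrands $f(\cdot,s)$, $f(\cdot,s,t)$ in item 3 are interpreted as elements of the appropriate tensor powers of $\mathcal H$ (equivalently, one works with the completed tensor product and invokes density of $\mathcal E^{\otimes k}$). With that understood, the proof reduces to citing \cite{nourdin2012normal} and, where desired, sketching the one-line arguments above.
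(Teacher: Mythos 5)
Your proposal is correct and matches the paper's treatment: the paper states this proposition as a summary of standard facts and simply refers to \cite{nourdin2012normal} for the proofs, which is exactly the route you take (with some additional, accurate sketching of the standard arguments). No further comparison is needed.
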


\subsection{Fractional Integration}
This section gives a brief summary of the useful properties related to the Young integral when the Hurst index $H$ is strictly bigger than $\frac12$,
see \cite{young1936inequality,zahle1998integration} for more details.

The following result extends the Riemann integral to a larger class of integrands and integrators.
For $p>0$, we use the classical notations $\mathcal C^{p-var}([0,T])$ to denote the space of functions $f:[0,T]\to\R$ with finite $p$-variations. It is well known that $\theta$-H\"older continuous functions have $\frac{1}{\theta}$-finite variations.

\begin{proposition}\label{integralYoung}
Suppose $p,q>0$ are such that $\frac{1}{p}+\frac{1}{q}>1$.
If $f\in\mathcal C^{p-var}([0,T])$ and $g\in \mathcal C^{q-var}([0,T])$ (with $g$ continuous), then the limit of Riemann sums 
$$\sum\limits_{k=0}^{n-1}f\left(\frac{kT}{n}\right)\left(g\left(\left(\frac{(k+1)T}{n}\vee a\right)\wedge b\right)-g\left(\left(\frac{kT}{n}\vee a\right)\wedge b\right)\right)$$ exists for all $0\leq a<b\leq T$, and is called the Young integral $\int_a^bfdg$ of $f$ against $g$. It is compatible in the sense that, if $0\leq a<c<d<b\leq T$, then $\int_c^dfdg=\int_a^bf\mathbb{I}_{[c,d]}dg$. Moreover, it satisfies the chain rule and the change of variable formula.\\
Moreover, if $f$ (resp $g$) are $\frac{1}{p}$-H\"older continuous (resp $\frac{1}{q}$-H\"older continuous), we have the Young-Loeve estimates: 
\begin{eqnarray*}
\left|\int_a^bfdg-f(a)(g(b)-g(a))\right|\leq c_{\mu,\beta}\|f\|_{\frac{1}{p}}\|g\|_{\frac{1}{q}}|b-a|^{\frac{1}{p}+\frac{1}{q}},\\
\left|\int_a^bfdg\right|\leq c_{\mu,\beta}\left(\|f\|_{\infty}\|g\|_{\frac{1}{q}}|b-a|^\frac{1}{q}+\|f\|_{\frac{1}{p}}\|g\|_{\frac{1}{q}}|b-a|^{\frac{1}{p}+\frac{1}{q}}\right),
\end{eqnarray*}
where $c_{\mu,\beta}$ is a constant depending only on $p$ and $q$.
\end{proposition}
When  $f:[0,T]^2\to\R$ is such that $f(t,t)=0$, we write $f\in\mathcal{C}^{\kappa}([0,T]^2)$ if
\begin{equation}\label{cc}
\|f\|_{\kappa}:=\sup_{0\leq s\neq t\leq T}\frac{|f(s,t)|}{|t-s|^{\kappa}}<\infty.
\end{equation}

Recall that, for each $i$, the fractional Brownian motion $B^i$ has a.s.\!\! $\kappa$-H\"older continuous paths for every $\kappa<H$. Therefore, if the process $u$ has a.s.\! finite $\alpha$-variations for some $\alpha>1-H$, it is an immediate consequence of Proposition \ref{integralYoung}  that the Young integral $\int_0^\cdot udB^{i}$ is well-defined pathwise on $[0,T]$; this makes the Young integral a suitable integral when $H>\frac12$. In contrast, it is not a suitable integral when $H=\frac{1}{2}$ because, for instance, we cannot deal with integrals as simple as $\int B^jdB^i$. 

Another way to define the Young integral is to make use of the forward integration {\it \`a la Russo-Vallois}  
 \cite{russo1993forward}. Their {\it forward integral} is defined, for fixed $j$, as
\begin{equation}\int_0^\cdot u_sdB^j_s=\lim_{\epsilon\rightarrow 0}\frac{1}{\epsilon}\int_0^\cdot u_s\left(B^j_{s+\epsilon\wedge \cdot}-B^j_s\right)ds,\label{RussoVallois}\end{equation}
provided the limit exists uniformly in probability over the interval $[0,T]$. 
When $H>\frac{1}{2}$ and $u\in C^\theta([0,T])$ with $\theta>1-H$, then the limit \eqref{RussoVallois} exists and coincides with the Young integral. When $H=\frac{1}{2}$ and $u$ is progressively measurable, then the limit \eqref{RussoVallois} exists and coincides with the It\^o integral.

In \cite{nualart2006malliavin}, the following relationship between the forward and Skorohod integrals is shown.

\begin{proposition}\label{traceClass}
Assume that $H>\frac{1}{2}$, and let $u\in\mathbb{D}^{1,2}(|\mathcal H|)$ be a scalar process. In addition, suppose that $u$ verifies the following condition:
\begin{equation}\forall j\in \{1,\ldots,d\},\int_0^T\int_0^T|D^{1,j}_su_r| |r-s|^{2H-2}dsdr<\infty \text{ a.s.}.\label{asCond}\end{equation}
Then, the limit \eqref{RussoVallois} exists and verifies the relation
\begin{equation}
\int_0^Tu_sdB^{j}_s=\delta^{1,j}(u)+H(2H-1)\int_0^T\int_0^TD^{1,j}_su_r|r-s|^{2H-2}dsdr,
\label{fracMal}
\end{equation}
where the integral in the left-hand side is in the Russo-Vallois sense.
\end{proposition}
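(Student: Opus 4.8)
The plan is to reproduce, in the present setting, the classical identification of a Russo--Vallois forward integral with a divergence integral plus a trace term. Fix $j\in\{1,\dots,d\}$, write $I_\epsilon:=\frac1\epsilon\int_0^T u_s\big(B^j_{(s+\epsilon)\wedge T}-B^j_s\big)\,ds$, and start by representing each increment as a first-chaos Skorohod integral, $B^j_{(s+\epsilon)\wedge T}-B^j_s=\delta^{1,(j)}\big(\mathbb{I}_{[s,\,(s+\epsilon)\wedge T]}\big)$. Applying the integration-by-parts rule $F\,\delta^{1,(j)}(h)=\delta^{1,(j)}(Fh)+\langle D^{1,(j)}F,h\rangle_{\mathcal H}$ with $F=u_s$ and $h=\mathbb{I}_{[s,\,(s+\epsilon)\wedge T]}$, then integrating in $s\in[0,T]$, dividing by $\epsilon$, and commuting $\delta^{1,(j)}$ with the $ds$-integral by a stochastic Fubini theorem, one obtains the decomposition $I_\epsilon=\delta^{1,(j)}(u^\epsilon)+R_\epsilon$, where $u^\epsilon_r:=\frac1\epsilon\int_{(r-\epsilon)\vee0}^{r}u_s\,ds$ is the left time-average of $u$ and $R_\epsilon:=\frac1\epsilon\int_0^T\langle D^{1,(j)}u_s,\mathbb{I}_{[s,\,(s+\epsilon)\wedge T]}\rangle_{\mathcal H}\,ds$.

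For the trace term, I would insert the kernel representation $\langle f,g\rangle_{\mathcal H}=H(2H-1)\int_0^T\!\int_0^T f(r)g(r')|r-r'|^{2H-2}\,dr\,dr'$ (valid for $H>\tfrac12$), which gives $R_\epsilon=H(2H-1)\int_0^T\!\int_0^T D^{1,(j)}_r u_s\,\big(\tfrac1\epsilon\int_s^{(s+\epsilon)\wedge T}|r-r'|^{2H-2}\,dr'\big)\,dr\,ds$. Working $\omega$ by $\omega$, the inner average converges to $|r-s|^{2H-2}$ for a.e.\ $(r,s)$ by the Lebesgue differentiation theorem, and an elementary estimate shows it is bounded by $C_H\,|r-s|^{2H-2}$ uniformly in $\epsilon$; since by hypothesis \eqref{asCond} the function $(r,s)\mapsto|D^{1,(j)}_r u_s|\,|r-s|^{2H-2}$ is a.s.\ integrable on $[0,T]^2$, dominated convergence yields $R_\epsilon\to H(2H-1)\int_0^T\!\int_0^T D^{1,(j)}_s u_l\,|l-s|^{2H-2}\,ds\,dl$ a.s.\ (after relabelling), i.e.\ the trace term of \eqref{fracMal}. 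It remains to show $\delta^{1,(j)}(u^\epsilon)\to\delta^{1,(j)}(u)$: here one checks $u^\epsilon\to u$ in $\mathbb{D}^{1,2}(\mathcal H)$ (Lebesgue differentiation in time plus a uniform bound on the averaging operator, obtained after the usual density reduction to simple processes, for which Proposition \ref{boundsSkorokhod}(1) comparing $|\mathcal H|$- and $L^2(\lambda)$-norms is convenient), and then invokes Proposition \ref{boundsSkorokhod}(2) together with the closedness of $\delta^{1,(j)}$ to conclude convergence in $L^2(\Omega)$. Adding the two limits proves \eqref{fracMal}; running the same computation with $T$ replaced by $t\in[0,T]$ and keeping the estimates uniform yields the uniform-in-probability existence of the limit \eqref{RussoVallois}.

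The main obstacle is the bookkeeping that makes the above manipulations legitimate under the weak standing assumptions. Two points require care: (i) the stochastic Fubini step, i.e.\ the exchange of $\delta^{1,(j)}$ and $\int_0^T(\cdot)\,ds$, which must be justified from the closedness of $\delta^{1,(j)}$ together with the a priori bound of Proposition \ref{boundsSkorokhod}(2) applied to the $\mathcal H$-valued Riemann sums approximating the $ds$-integral; and (ii) the convergence $u^\epsilon\to u$ in $\mathbb{D}^{1,2}(\mathcal H)$, which is delicate precisely because $u$ is only assumed to lie in $\mathbb{D}^{1,2}(|\mathcal H|)$ --- not in $L^2(\Omega;L^2(\lambda))$ --- so that the averaging operator cannot be controlled by the trivial $L^2(\lambda)$-contraction estimate alone, and one must combine density of simple processes with a uniform $|\mathcal H|$-bound (and, if needed, a localization of $\Omega$ along the a.s.\ finite quantity in \eqref{asCond}) in order to pass to the limit. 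All remaining ingredients are the Lebesgue differentiation theorem and the continuity estimates recalled in Proposition \ref{boundsSkorokhod}.
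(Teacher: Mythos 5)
This proposition is not proved in the paper: it is quoted from \cite{nualart2006malliavin}, and your argument is essentially the classical proof given there (integration by parts $u_s\,\delta^{1,(j)}(\mathbb{I}_{[s,(s+\epsilon)\wedge T]})=\delta^{1,(j)}(u_s\mathbb{I}_{[s,(s+\epsilon)\wedge T]})+\langle D^{1,(j)}u_s,\mathbb{I}_{[s,(s+\epsilon)\wedge T]}\rangle_{\mathcal H}$, stochastic Fubini, dominated convergence for the trace term under \eqref{asCond}, and closedness of $\delta^{1,(j)}$ for the divergence term). Your decomposition, the uniform bound $\frac1\epsilon\int_s^{(s+\epsilon)\wedge T}|r-r'|^{2H-2}\,dr'\leq C_H|r-s|^{2H-2}$, and your identification of the two genuinely delicate points (the Fubini exchange and the convergence $u^\epsilon\to u$ in $\mathbb{D}^{1,2}(|\mathcal H|)$, where one uses that time-averaging is, up to boundary terms, a contraction for the $|\mathcal H|$-norm) are all correct and consistent with the cited proof.
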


\subsection{Matrix-valued Brownian motion and matrix-valued Rosenblatt process}\label{rosenblatt-and-brownian}
We introduce some probabilistic objects, taken from \cite[Sections 2.4 and 2.5]{hu2016rate} when $H>\frac12$, which we complete when
$H=\frac{1}{2}$. 
For more information about the Rosenblatt process, one can e.g. refer to \cite{tudor2008analysis}.

$(a)$ Assume first that $H\in\left[\frac{1}{2},\frac{3}{4}\right]$. For $H\not\in \{\frac12,\frac34\}$, define 
\begin{eqnarray}
q_H&=&\sum_{p\in\mathbb{Z}}\int_0^1\int_0^t\int_{p}^{p+1}\int_p^v|s-u|^{2H-2}|v-t|^{2H-2}dsdvdudt\notag\\
r_H&=&\sum_{p\in\mathbb{Z}}\int_0^1\int_0^t\int_{p}^{p+1}\int_p^v|s-t|^{2H-2}|v-u|^{2H-2}dsdvdtdu\notag,
\end{eqnarray}
and let $q_{\frac{1}{2}}=\frac{1}{2}$, $r_\frac{1}{2}=0$ and $q_\frac{3}{4}=r_\frac{3}{4}=\frac{1}{2}$.
We have $q_H\geq r_H$ by \cite[Lemma 2.1]{hu2016rate}.
Let $\{W^{0,i,j}\}_{1\leq i\leq j\leq d}$ and
$\{W^{1,i,j}\}_{1\leq i, j\leq d}$  be two independent families of independent standard Brownian motions, both independent of our underlying process $B$. We set $W^{0,i,j}=W^{0,j,i}$ for $j<i$.
The {\it matrix-valued Brownian motion} $(W^{i,j})_{1\leq i,j\leq d}$ is then defined as follows: 
\begin{eqnarray}\label{wh}
W^{i,j}&=&
\left\{
\begin{array}{lll}
c_H\sqrt{q_H+r_H}\,W^{1,i,j}&\quad\mbox{if $i=j$}\\
c_H\sqrt{q_H-r_H}W^{1,i,j}+c_H\sqrt{r_H}W^{0,i,j}&\quad\mbox{if $i\neq j$}
\end{array}
\right.,
\end{eqnarray} 
with the convention that $c_{\frac{1}{2}}=1$.

\medskip
$(b)$ Assume now that $H\in(\frac{3}{4},1)$. For any fixed $t\in[0,T]$, the sequence of $(d\times d)$-matrix-valued processes \begin{equation}\left(n\sum_{k=0}^{\lfloor nt \rfloor-1}\delta^{1,i}\left(\left(B^{j}_.-B^{j}_\frac{k }{n}\right)\mathbb{I}_{\left[\frac{k}{n},\frac{k+1}{n}\right]}(.)\right)\right)_{1\leq i,j\leq d}\notag\end{equation}
converges for all fixed $t\in [0,T]$ to some $Z_t$. The continuous version of the process $(Z_t)_{t\in[0,T]}$ is called the {\it matrix-valued Rosenblatt proces of order H}. Each component of this matrix-valued process is $\alpha$-H\"older continuous for every $\alpha<2H-1$. Moreover, the diagonals elements are independent Rosenblatt processes with selfsimilarity index $2H-1$.

\section{Examples}

We start by defining the notion of controlled process. This notion plays a key role because such a process verifies the conditions of Definition \ref{defpseudo}. 
We then give two classes of examples: fractional semimartingales (i.e. processes with decomposition (\ref{integralProcess})) and multiple Wiener-It\^o integrals.

\subsection{Controlled process}\label{sec:Controlled paths}
Throughout all this section, we assume $H>\frac{1}{2}$.\\
Recall that $\mathcal{C}^{\kappa}([0,T])$ denotes the set of $\kappa$-H\"older continuous functions $f:[0,T]\to\R$,
whereas $\mathcal{C}^{\kappa}([0,T]^2)$ denotes the set of $\kappa$-H\"older continuous functions $f:[0,T]^2\to\R$ such that $f(t,t)=0$ for all $t$, see (\ref{cc}).

\begin{definition}[Controlled process]\label{def2}
\textit{Consider $\kappa\in(\frac12,1)$.
The set $\mathcal{D}^{2\kappa}([0,T])$ is defined as the set of pairs 
$(u,P)$ with $u$ (resp. $P$) an $m$-dimensional process 
(resp. $(m\times d)$-dimensional process) belonging a.s.\! to
$ \mathcal{C}^{\kappa}([0,T])$ and such that the $m$-dimensional remainder process $R$ defined by
\begin{equation}
R^i_{s,t} = u^i_t-u^i_s-\sum_{j=1}^dP^{i,j}_s(B^j_t-B^j_s), \quad 0\leq s\leq t\leq T, 
\label{controlled}\end{equation}
belongs a.s.\!\! to $\mathcal{C}^{2\kappa}([0,T]^2)$.}
\end{definition}

For all $(u,P)\in \mathcal{D}^{2\kappa}([0,T])$, for all $s,t\in [0,T]$ and all $j\in\{1,\ldots,d\}$, Theorem 4.10 in \cite{friz2014course} implies:
\begin{equation}|L^{i,j}_{s,t}|\leq C\left(\|B\|_{\kappa}\|R\|_{2\kappa}+\|P\|_{\kappa}\|\mathbb{B}\|_{2\kappa}\right)|t-s|^{3\kappa}\label{enhancedYoung}\end{equation}
where $\mathbb{B}$ is defined as
$\mathbb{B}_{s,t}^{k,j}=\int_s^t(B^k_l-B^k_s)dB_l^j$, $L$ is given by $L^{i,j}_{s,t}=\int_s^tR^i_{s,r}dB_r^j$ (or equivalently by (\ref{Lij})), and
$C$ is a constant depending only on $\kappa$ and $T$.
The following proposition gives an explicit link between the notion of 
controlled path {\it \`a la Gubinelli} \cite{gubinelli2004controlling} (Definition \ref{def2}) and our notion of pseudo-controlled path (Definition \ref{defpseudo}).

\begin{proposition}\label{controlledPaths}\textit{ 
Assume that $\kappa>\frac{2(H\wedge\frac{3}{4})}{3}+\frac{1}{6}$,
 $(u,P)\in\mathcal{D}^{2\kappa}([0,T])$ and, for some $\theta>0$  and all $j\in\{1,\cdots,d\}$,
\begin{equation}\sum_{i=1}^m\mathbb{E}\left[\|R^i\|^{2+\theta}_{2\kappa}+\sum_{j=1}^d\|P^{i,j}\|^{2+\theta}_{\kappa}\right]<\infty,
\label{moment}\end{equation}
with $R$ defined by (\ref{controlled}).
Then $(u,P)\in\mathbb{C}_2$.
}\end{proposition}
\begin{proof} The proof is a straightforward combination of the identity \eqref{enhancedYoung}, the H\"older inequality and the forthcoming Lemmas \ref{Fernique} and \ref{simpleInequality}. 
\end{proof}

As a consequence of Theorem \ref{main2} and Proposition \ref{controlledPaths}, we deduce the following statement.
\begin{proposition}\label{integralProcesses}
Fix $H>\frac{1}{2}$, and let 
\begin{equation}
u^i_t=u^i_0+\sum_{j=1}^d\int_0^ta^{i,j}_sdB^j_s+\int_0^tb^i_sds,\quad t\in[0,T],\,i\in\{1,\ldots,m\},
\label{integralProcess}
\end{equation}
where the $a^{i,j}$ are a.s. \!\!$\kappa$-H\"older continuous for some $\kappa>\frac{2(H\wedge\frac{3}{4})}{3}+\frac{1}{6}$ and the $b^j$ are $\beta$-H\"older continuous for some $\beta>H-\frac{1}{2}$.
Assume moreover that  there exists $\theta>0$ such that $$\sum_{j=1}^d \mathbb{E}\left[
|b^j_0|^{2+\theta}+\|b^j\|_\beta^{2+\theta}+\sum_{i=1}^m\|a^{i,j}\|^{2+\theta}_{\kappa}
\right]<\infty.$$ 
Then, with $M^{n,i,j}$ defined by (\ref{mnij})
and $W$ and $Z$ the matrix-valued processes of Section \ref{rosenblatt-and-brownian},
\begin{itemize}
\item if $H\leq \frac{3}{4}$, then, stably in $\mathcal{C}_{\mathbb{R}^{m\times d}}([0,T])$,
\begin{equation*}\left\{\nu_H(n)\left(M_\cdot^{n,i,j}-\frac{1}{2}\int_0^\cdot a^{i,j}_sds\right)\right\}_{i,j}\overset{}{\underset{n\rightarrow\infty}\longrightarrow}\left\{\sum_{k=1}^{d}\int_0^\cdot a_s^{i,k}dW^{k,j}_s\right\}_{i,j}. 
\end{equation*}
\item if $H>\frac{3}{4}$, then, uniformly on $[0,T]$ in probability,
\begin{equation*}
\left\{\nu_H(n)\left(M_\cdot^{n,i,j}-\frac{1}{2}\int_0^\cdot a^{i,j}_sds\right)\right\}_{i,j}\overset{}{\underset{n\rightarrow\infty}\longrightarrow}\left\{\sum_{k=1}^{d}\int_0^\cdot a_s^{i,k}dZ^{k,j}_s+\frac{1}{2}\int_0^\cdot b^i_sdB^j_s\right\}_{i,j}.
\end{equation*}
\end{itemize}
\end{proposition}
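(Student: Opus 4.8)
The plan is to decompose $u^i=v^i+w^i$, where $v^i_t=u^i_0+\sum_{j=1}^d\int_0^ta^{(i,j)}_sdB^j_s$ is a genuine controlled path and $w^i_t=\int_0^tb^i_sds$ has bounded variation, to treat $v$ by Proposition~\ref{controlledPaths} together with Theorem~\ref{main1}, and to handle $w$ by hand. Since the Young integral and the Riemann sum are linear in the integrand, $M^{n,(i,j)}=M^{v,n,(i,j)}+M^{w,n,(i,j)}$, where $M^{v,n}$ (resp.\ $M^{w,n}$) denotes the quantity of interest built from $v$ (resp.\ from $w$); both are well defined because $v^i$ is $\kappa$-H\"older with $\kappa>1-H$ and $w^i$ is Lipschitz.

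For the first summand one checks that $(v,a)\in\mathcal{D}_{2\kappa}$: its components are $\kappa$-H\"older, and the remainder $R^i_{s,t}=\sum_j\int_s^t(a^{(i,j)}_l-a^{(i,j)}_s)dB^j_l$ is $2\kappa$-H\"older by the first Young--Loeve estimate of Proposition~\ref{integralYoung}. The moment bound \eqref{moment} follows from the hypothesis $\sum_{i,j}\mathbb{E}[(\|b^j\|_\beta+\|a^{(i,j)}\|_\kappa^2)^{2+\delta}]<\infty$, Fernique's estimate (Lemma~\ref{Fernique}) for the H\"older norms of $B^j$, and H\"older's inequality (e.g.\ with $\mu=\theta=\gamma=4$), so that $(v,a)\in\mathbb{C}_2$ by Proposition~\ref{controlledPaths}. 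Since moreover $v$ is $\kappa$-H\"older with $\kappa>1-H$, $a$ is $\kappa$-H\"older with $\kappa>\tfrac12$ (both because $\kappa>\tfrac{2(H\wedge3/4)}{3}+\tfrac16$ and $H>\tfrac12$), and for $H>\tfrac34$ the integrability $\sum_{i,j}\mathbb{E}\|a^{(i,j)}\|_\kappa^{2+\delta}<\infty$ also holds, Theorem~\ref{main1}(A) applies to $(v,a)$ with $P=a$ and gives, with the matrix-valued processes of Section~\ref{rosenblatt-and-brownian},
\[
\nu_H(n)\Big\{M^{v,n,(i,j)}_\cdot-\tfrac12\int_0^\cdot a^{(i,j)}_sds\Big\}_{i,j}\longrightarrow\Big\{\sum_{k=1}^d\int_0^\cdot a^{(i,k)}_s\,dW^{(k,j)}_{s,H}\Big\}_{i,j}
\]
stably in $\mathcal{C}([0,T])$ if $\tfrac12<H\le\tfrac34$, and the same limit with $Z_H$ in place of $W_H$, uniformly on $[0,T]$ in probability, if $\tfrac34<H<1$.

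When $H\le\tfrac34$ one shows that $(w,0)\in\mathbb{C}_2$. With $P\equiv0$, a Young integration by parts gives $L^{w,(i,j)}_{s,t}=\int_s^t(w^i_l-w^i_s)dB^j_l=\int_s^tb^i_r(B^j_t-B^j_r)dr$, and a direct second-moment estimate---bounding $b^i$ by $\|b^i\|_\infty$ and retaining the $r_H$-type decay of $\mathbb{E}[(B^j_t-B^j_r)(B^j_y-B^j_{r'})]$---shows $\mathbb{E}[L^{w,(i,j)}_{s,t}L^{w,(i,j)}_{x,y}]=o(f_2(s,t,x,y))$ uniformly on $[0,T]^4$, the decisive exponent inequality being $2H+2>6H-1$, i.e.\ $H<\tfrac34$ (the logarithmic factor at $H=\tfrac34$ being absorbed by $\kappa_H$). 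As $w$ is Lipschitz and $P=0$ trivially meets the H\"older requirement, Theorem~\ref{main1}(A) then gives $\nu_H(n)M^{w,n,(i,j)}\to\tfrac12\int_0^\cdot 0\,ds=0$ stably in $\mathcal{C}([0,T])$. Adding the two contributions, and using that a sequence tending to $0$ in probability does not change a stable limit, yields the statement for $H\le\tfrac34$ (the centering $\tfrac12\int_0^\cdot a^{(i,j)}_sds$ being exactly the first-order limit of $M^{n,(i,j)}$).

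When $H>\tfrac34$ one has $\nu_H(n)=n^{2-2H}$, hence $\nu_H(n)M^{w,n,(i,j)}_t=n\big(\int_0^tw^i_sdB^j_s-\sum_kw^i_{k/n}(B^j_{\frac{k+1}{n}\wedge t}-B^j_{\frac kn})\big)$, and writing $w^i_s-w^i_{k/n}=\int_{k/n}^sb^i_rdr$ and integrating by parts,
\[
\int_0^tw^i_sdB^j_s-\sum_kw^i_{k/n}\big(B^j_{\frac{k+1}{n}\wedge t}-B^j_{\frac kn}\big)=\int_0^tb^i_r\big(B^j_{\bar r_n\wedge t}-B^j_r\big)\,dr,\qquad\bar r_n:=\tfrac{\lfloor nr\rfloor+1}{n}.
\]
Replacing $b^i_r$ by its value at the left endpoint of the subinterval containing $r$ costs, by the $\beta$-H\"older regularity of $b^i$ and a Young--Loeve bound, an error of order $\|b^i\|_\beta\|B^j\|_\vartheta\,n^{1-\beta-\vartheta}\to0$, once $\vartheta<H$ is chosen with $\beta+\vartheta>1$ (possible since $\beta>H-\tfrac12>1-H$ for $H>\tfrac34$). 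For the main term, use $n\int_{k/n}^{(k+1)/n}(B^j_{(k+1)/n}-B^j_r)dr=\tfrac12(B^j_{(k+1)/n}-B^j_{k/n})+\eta_{k,n}$, where $\eta_{k,n}=\int_{k/n}^{(k+1)/n}\big(n(r-\tfrac kn)-\tfrac12\big)dB^j_r$ is a centered Gaussian variable whose rescaled kernel $\sigma\mapsto\sigma-\tfrac12$ has zero average; consequently $\mathbb{E}[\eta_{k,n}^2]$ is of order $n^{-2H}$ and, the first two terms of the large-$|k-l|$ expansion of the covariance cancelling, $\mathbb{E}[\eta_{k,n}\eta_{l,n}]$ is of order $n^{-2H}|k-l|^{2H-4}$. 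Therefore $\sum_kb^i_{k/n}\tfrac12(B^j_{\frac{k+1}{n}\wedge t}-B^j_{\frac kn})\to\tfrac12\int_0^tb^i_sdB^j_s$ (a Riemann sum for the Young integral, well defined as $\beta>1-H$), while $\sum_kb^i_{k/n}\eta_{k,n}$ has $L^2$-norm of order $\|b^i\|_\infty\,n^{(1-2H)/2}\to0$, using $\sum_{m\ge1}m^{2H-4}<\infty$ (valid as $H<1$); together with the controlled part this gives the statement for $H>\tfrac34$. The step I expect to be the \emph{main obstacle} is precisely this last $L^2$-estimate of $\sum_kb^i_{k/n}\eta_{k,n}$ (and, similarly, the off-diagonal verification of $(w,0)\in\mathbb{C}_2$ when $H\le\tfrac34$): since $b^i$ is only $\sigma\{B\}$-measurable and merely $\beta$-H\"older it cannot be decoupled from the Gaussian family $(\eta_{k,n})$, so one must couple the covariance bounds above with the Malliavin/chaos arguments used to prove Theorems~\ref{main1} and~\ref{main2}; everything else is routine bookkeeping (Young--Loeve and Fernique estimates, H\"older's inequality for the integrability conditions, and the linearity of $M^{n,(i,j)}$).
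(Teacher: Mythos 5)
Your decomposition $u=v+w$ and your treatment of the controlled part $v$ (via Proposition \ref{controlledPaths} and Theorem \ref{main1} with $P=a$) coincide with the paper's argument and are fine. The problem is the drift part $w^i_t=\int_0^t b^i_s\,ds$, where the step you yourself flag as the ``main obstacle'' is a genuine gap rather than routine bookkeeping; moreover, your $H\le\frac34$ route would fail even beyond that. To show $(w,0)\in\mathbb{C}_2$ you must verify $\mathbb{E}[L^{w}_{s,t}L^{w}_{x,y}]=o(f_2(s,t,x,y))$ \emph{uniformly} over $[0,T]^4$, including configurations where $[s,t]$ and $[x,y]$ are short but well separated. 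There $r_H(s,t,x,y)$ is comparable to $|t-s|\,|x-y|$ up to a gap-dependent constant, so $f_2$ is of order $|t-s|^{2H+\frac12}|x-y|^{2H+\frac12}$, while the only bound available without decoupling $b$ from $B$ (Cauchy--Schwarz plus the pathwise Young--Loeve estimate $|L^w_{s,t}|\le C\|b\|_\infty\|B\|_\vartheta|t-s|^{1+\vartheta}$) gives $|t-s|^{1+\vartheta}|x-y|^{1+\vartheta}$ with $\vartheta<H$; since $1+H<2H+\frac12$ for every $H>\frac12$, this is \emph{not} $o(f_2)$. Your exponent count $2H+2>6H-1$ only covers the diagonal $x=s$, $y=t$. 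To retain the covariance decay of $\mathbb{E}[(B_t-B_r)(B_y-B_{r'})]$ inside the double integral you would have to pull the expectation past the random, $\sigma\{B\}$-measurable weight $b$, and nothing in the hypotheses ($b$ merely H\"older with moments, no Malliavin regularity or independence) licenses that. The same decoupling problem blocks your $L^2$ estimate of $\sum_k b^i_{k/n}\eta_{k,n}$ when $H>\frac34$.

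The paper never claims $(w,0)\in\mathbb{C}_2$. It writes the drift error as $R^n_t+A^n_t$, where $A^n_t=n^{2H-1}\nu_H(n)\sum_k b^i_{k/n}\int_{k/n}^{\frac{k+1}{n}\wedge t}(s-s_n)\,dB^j_s$ (exactly your $\sum_k b_{k/n}\bigl(\tfrac12\Delta_kB+\eta_{k,n}\bigr)$ after rescaling) and $R^n_t$ is the error from freezing $b$ on each subinterval, killed pathwise by Young--Loeve and $\beta>H-\frac12$. The weighted sum $A^n$ is then handled by Lemma \ref{driftConvergence}, whose proof follows the approximation scheme of Lemmas \ref{quadraticVariation} and \ref{skorokhodVariation}: establish the limit for the unweighted sums (the case $b=\mathbb{I}_{[0,t]}$, taken from \cite{hu2016rate}), extend to step processes with $\mathfrak F$-measurable coefficients, and pass to general piecewise continuous $b$ with $\mathbb{E}\|b\|_\infty^{2+\delta}<\infty$ by uniform approximation and uniform integrability. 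This is the missing idea in your write-up: random weights are handled by approximating against the already established unweighted limit theorem, not by computing mixed moments of $b$ with the Gaussian increments. Replacing your two ad hoc treatments of $w$ by this single lemma makes the rest of your argument go through.
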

\begin{proof} 
Set $v^i_t=u^i_t-\int_0^tb^i_sds=u^i_0+ \sum_{j=1}^d\int_0^ta^{i,j}_sdB^j_s$. For any $i,j$, we have
\begin{equation*}
\nu_H(n)\left(M_\cdot^{n,i,j}-\frac{1}{2}\int_0^\cdot a^{i,j}_sds\right)=A^{n,i,j}_\cdot+C^{n,i,j}_\cdot
\end{equation*}
with,  for $t\in [0,T]$,
\begin{eqnarray*}
A^{n,i,j}_t&=&\nu_H(n)\left\{n^{2H-1}\left(\int_0^t v^i_sdB^j_s-\sum_{k=0}^{nt_n}v^i_\frac{k}{n}\left(B^j_{\frac{k+1}{n}\wedge t}-B^j_\frac{k}{n}\right)\right)\right.\\
&&\hskip8cm \left. -\frac{1}{2}\int_0^t a^{i,j}_sds\right\}\\
C^{n,i,j}_t&=&\nu_H(n)n^{2H-1}\left(\int_0^t\int_0^s b^i_rdrdB^j_s-\sum_{k=0}^{nt_n}\int_{0}^{\frac{k}{n}}b^i_rdr\left(B^j_{\frac{k+1}{n}\wedge t}-B^j_\frac{k}{n}\right)\right).
\end{eqnarray*}
We will show that $(v,a)\in\mathbb{C}_2$ and we will deduce from Theorem \ref{main2}  the convergence of $(A^{n,i,j})_{i,j}$.
Then we will prove that
$(C^{n,i,j})_{i,j}$
converges either to $0$ in $\mathcal{C}^{m\times d}([0,T])$ (when $H\leq\frac{3}{4}$) or uniformly in probability to $\frac{1}{2}\int_0^\cdot b_sdB_s$ (when $H>\frac34$). The continuous mapping theorem will then allow to conclude.

We start by showing that $(v,a)\in \mathbb{C}_2$.
For $0\leq s\leq t\leq T$, set
\begin{equation*}
R^i_{s,t}=v^i_t-v^i_s-\sum_{j=1}^d a^{i,j}_s(B^j_t-B^j_s)=\sum_{j=1}^d\int_s^t(a^{i,j}_r-a^{i,j}_s)dB^j_r.
\end{equation*}
Using the Young-Loeve inequality (Proposition \ref{integralYoung}), we have
\begin{eqnarray*}
|R^i_{s,t}|&\leq& |t-s|^{2\kappa}\times c_{\kappa}\sum_{j=1}^d \|a^{i,j}\|_{\kappa}\|B^j\|_{\kappa}\\
\|v^i\|_{\kappa}&\leq& \sum_{j=1}^d\left(\|a^{i,j}\|_{\infty}\|B^j\|_{\kappa}+c_{\kappa,\kappa}\|a^{i,j}\|_{\kappa}\|B^j\|_{\kappa}T^{\kappa}\right)\\
&\leq& \sum_{j=1}^d\left((1+c_{\kappa,\kappa})T^{\kappa}\|a^{i,j}\|_{\kappa}+|a^{i,j}_0|\right)\|B^j\|_{\kappa}
\end{eqnarray*}
where the last inequality comes from the fact that $\|a^{i,j}\|_{\infty}\leq |a^{i,j}_0|+T^{\kappa}\|a^{i,j}\|_{\kappa}$.
Thus, $v$ verifies the condition of Proposition \ref{controlledPaths}, with $P^{i,j}=a^{i,j}$. We deduce that $(v,a)\in\mathbb{C}_2$, and we can apply  Theorem \ref{main2}
to $(v,a)$, after observing that $v$ is $\alpha$-H\"older continuous for all $\alpha=\kappa>\frac12>1-H$. This shows the convergence of $(A^{n,i,j})_{i,j}$.

We now study the convergence of $C^{n,i,j}$. Set $s_n=\lfloor ns\rfloor/n$. We have
\begin{eqnarray*}
C_t^{n,i,j}
&=&\nu_H(n)n^{2H-1}\left(\int_0^t\int_0^sb^i_rdrdB^j_s-\int_0^{t}\int_0^{s_n}b^i_rdrdB^j_s\right)\\
&=&\nu_H(n)n^{2H-1}\left(\int_0^t\int_{s_n}^s\left(b^i_r-b^i_{s_n}\right)drdB^j_s\right.\\
&&\hskip3.5cm\left.+\sum_{k=0}^{nt_n}b^i_\frac{k}{n}\int_{\frac{k}{n}}^{\frac{k+1}{n}\wedge t}(s-s_n)dB^j_s\right)\\
&=:&R_t^{n,i,j}+D_t^{n,i,j}.
\end{eqnarray*} 
Lemma \ref{driftConvergence} provides the desired convergence for $D^{n,i,j}$. It remains to show that $R^{n,i,j}$ is negligible.
We have
$$
R_t^{n,i,j} = \nu_H(n)n^{2H-1} \sum_{k=0}^{nt_n}\int_{\frac{k}{n}}^{\frac{k+1}{n}\wedge t}\left(
\int_{\frac{k}n}^s (b^i_r-b^i_{\frac{k}n})dr
\right)dB^j_s.
$$
Fix $\varepsilon>0$ small enough. We can write, using the Young-Loeve inequalities (Proposition \ref{integralYoung}) and denoting by $c$ a constant independent of $n$ (whose value can change from line to another)
\begin{eqnarray*}
&&\int_{\frac{k}{n}}^{\frac{k+1}{n}\wedge t}\left(
\int_{\frac{k}n}^s (b^i_r-b^i_{\frac{k}n})dr
\right)dB^j_s\\
&\leq&c\,n^{-H+\varepsilon} \,
\left\|
\int_{\frac{k}{n}}^{\cdot}  (b^i_r-b^i_{\frac{k}n})dr
\right\|_{\infty,\big[\frac{k}{n},\frac{k+1}{n}\wedge t\big]}
\|B^j\|_{H-\varepsilon}
\\
&+&
c\, n^{-1-H+\varepsilon} \,
\left\|
\int_{\frac{k}{n}}^{\cdot}  (b^i_r-b^i_{\frac{k}n})dr
\right\|_{1,\big[\frac{k}{n},\frac{k+1}{n}\wedge t\big]}
\|B^j\|_{H-\varepsilon}\\
&\leq& c\,n^{-1-H-\beta+\varepsilon}\, \|b^i\|_{\beta}\,\|B^j\|_{H-\varepsilon}.
\end{eqnarray*}
We deduce that
$$
\big|R_t^{n,i,j}\big| \leq c\, \nu_H(n) n^{H-1-\beta+\varepsilon}\, \|b^i\|_{\beta}\,\|B^j\|_{H-\varepsilon},
$$
and then $\EE\left[\sup_{t\in [0,T]}(R_t^{n,i,j})^2\right]\rightarrow 0$ (chosing $\varepsilon$ small enough), proving the convergence of this remainder to zero uniformly in probability.
This concludes the proof of Proposition \ref{integralProcesses}.
\end{proof}

We now state a corollary of Proposition \ref{integralProcesses}, which extends to the case $H>\frac12$ a similar statement proved in \cite{lindberg2013error} when $H=\frac{1}{2}$.

\begin{corollary}\label{functionB}
\textit{ 
Fix $H>\frac{1}{2}$, and
let $F:\R^d\to\R^m$ be a $\mathcal{C}^2$-function satisfying the following growth condition:
for some $K_1,K_2>0$ and some $0<\gamma<2$, one has,   for all $x\in\R^d$,
\begin{equation}
\max_{i\in\{1,\ldots,m\}}\max_{j,k\in\{1,\ldots,d\}}\max\left\{|F^i(x)|,\left\lvert\frac{\partial F^i}{\partial x_j}(x)\right\rvert, \left\lvert\frac{\partial^2 F^i}{\partial x_k\partial x_j}\right\rvert\right\}\leq K_1e^{K_2\|x\|_{\R^d}^{\gamma}}.
\label{regCond}
\end{equation}
Let $u_t=F(B_t)$. We have, with $W$ and $Z$ the matrix-valued processes of Section \ref{rosenblatt-and-brownian}:
\begin{itemize}
\item if  $H\leq \frac34$, then, stably in $\mathcal{C}_{\mathbb{R}^{m\times d}}([0,T])$,
\begin{equation*}
\left\{\nu_H(n)\left(M_\cdot^{n,i,j}-\frac{1}{2}\int_0^\cdot\frac{\partial F^i}{\partial x_j}(B_s)ds\right)\right\}_{i,j}\overset{}{\underset{n\rightarrow\infty}\longrightarrow}\left\{\sum_{k=1}^{d}\int_0^.\frac{\partial F^i}{\partial x_k}(B_s)dW^{k,j}_s\right\}_{i,j};
\end{equation*}
\item if $H>\frac{3}{4}$, then, uniformly on $[0,T]$ in probability,
\begin{equation*}
\left\{\nu_H(n)\left(M_\cdot^{n,i,j}-\frac{1}{2}\int_0^\cdot\frac{\partial F^i}{\partial x_j}(B_s)ds\right)\right\}_{i,j}\overset{}{\underset{n\rightarrow\infty}\longrightarrow}\left\{\sum_{k=1}^{d}\int_0^.\frac{\partial F^i}{\partial x_k}(B_s)dZ^{k,j}_s\right\}_{i,j}.
\end{equation*}
\end{itemize}
}\end{corollary}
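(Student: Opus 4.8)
\textbf{Proof plan for Corollary \ref{functionB}.}
The strategy is to verify that $u_t = F(B_t)$ falls under the scope of Proposition \ref{integralProcesses} with an explicitly identified pair of coefficients, and then to read off the conclusion. First I would apply the It\^o/Young change of variable formula (the chain rule from Proposition \ref{integralYoung} when $H>\frac12$) to write
\begin{equation*}
F^i(B_t) = F^i(B_0) + \sum_{j=1}^d \int_0^t \frac{\partial F^i}{\partial x_j}(B_s)\,dB^j_s,
\end{equation*}
so that $u^i$ has exactly the form \eqref{integralProcess} with $a^{(i,j)}_s = \frac{\partial F^i}{\partial x_j}(B_s)$ and $b^i \equiv 0$. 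Since $b^i \equiv 0$, the extra drift term $\frac12\int_0^\cdot b^i_s dB^j_s$ appearing in the $H>\frac34$ case of Proposition \ref{integralProcesses} disappears, which is why both bullets of the corollary have the clean form stated.

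The substance of the argument is then the verification of the two hypotheses of Proposition \ref{integralProcesses} for $a^{(i,j)}_s = \partial_j F^i(B_s)$ (with $b^i\equiv0$ the drift hypothesis is vacuous). For the H\"older regularity: since $F\in\mathcal C^2$, the function $\partial_j F^i$ is locally Lipschitz, and $B$ is a.s. $\kappa$-H\"older continuous for every $\kappa<H$; composing, $s\mapsto \partial_j F^i(B_s)$ is a.s. $\kappa$-H\"older with
\begin{equation*}
\|a^{(i,j)}\|_\kappa \le \Big(\sup_{\|x\|\le \|B\|_\infty} \|\nabla \partial_j F^i(x)\|\Big)\,\|B\|_\kappa,
\end{equation*}
and one may pick $\kappa$ in the required window $\big(\tfrac{2(H\wedge 3/4)}{3}+\tfrac16,\,H\big)$, which is nonempty precisely because $H>\tfrac13+\tfrac16=\tfrac12$. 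For the moment bound, one needs $\mathbb E\big[\|a^{(i,j)}\|_\kappa^{2(2+\delta)}\big]<\infty$ for some $\delta>0$; using the above bound together with the growth condition \eqref{regCond}, this reduces to controlling $\mathbb E\big[e^{c\|B\|_\infty^\gamma}\,\|B\|_\kappa^{2(2+\delta)}\big]$ for a suitable constant $c$. Since $\gamma<2$, the Fernique-type estimate (Lemma \ref{Fernique}, invoked earlier for exactly this purpose) guarantees that $e^{c\|B\|_\infty^\gamma}$ has moments of all orders, and $\|B\|_\kappa$ has Gaussian-type tails; a Cauchy--Schwarz (or H\"older) split of the expectation then closes the bound, with $\delta$ chosen small enough.

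The only genuine subtlety — and the step I expect to require the most care — is the interplay between the exponent $\gamma$ and the integrability: one must make sure the window $\gamma<2$ is exactly what makes $\mathbb{E}[e^{c\|B\|_\infty^\gamma}]<\infty$, and that after multiplying by the polynomial-in-$\|B\|_\kappa$ factor (and raising to the power $2+\delta$) the expectation is still finite; this is where the strict inequality $\gamma<2$ is used, rather than $\gamma\le 2$. Everything else is bookkeeping: once $(u,a)\in\mathbb C_2$ is established via Proposition \ref{integralProcesses} (equivalently Proposition \ref{controlledPaths}), the two displayed limit theorems are immediate specializations, with the Wiener integrals against $W_H$ in the case $H\le\frac34$ and the Young integrals against the Rosenblatt process $Z_H$ in the case $H>\frac34$, and with $P^{(i,j)}=a^{(i,j)}=\partial_j F^i(B_\cdot)$ substituted into the right-hand sides. \qed
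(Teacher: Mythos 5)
Your proposal is correct and follows essentially the same route as the paper: apply the Young chain rule to write $u^i_t=F^i(0)+\sum_j\int_0^t\partial_j F^i(B_s)\,dB^j_s$, identify $a^{(i,j)}_s=\partial_j F^i(B_s)$ (with $b\equiv 0$), bound $\|a^{(i,j)}\|_\kappa$ via the growth condition \eqref{regCond}, invoke Lemma \ref{Fernique} for the moments, and conclude by Proposition \ref{integralProcesses}. The only difference is cosmetic: the paper states the H\"older bound directly as $\|a^{(i,j)}\|_{\alpha}\leq \prod_{j}e^{T^\gamma \|B^{j}\|_{\alpha}^\gamma}\sum_{k}\|B^{k}\|_{\alpha}$ and appeals to Fernique once, whereas you phrase the same estimate through a local-Lipschitz composition and a H\"older split.
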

\begin{proof}The change of variable formula for the Young integral leads to
\begin{equation}u^i_t=F^i(0)+\sum_{j=1}^d\int_0^t\frac{\partial F^i}{\partial x_j}(B_s)dB^{j}_s, \quad 1\leq i\leq m.\notag\end{equation}
Then, $u$ is of the type (\ref{integralProcess}), with $a^{i,j}_\cdot=\frac{\partial F^i}{\partial x_j}(B_\cdot)$ and $b^i\equiv 0$.
The regularity condition \eqref{regCond} implies that $ a^{i,j}$ is $\alpha$-H\"older continous for every $\alpha<H$ and that \begin{equation}\|a^{i,j}\|_{\alpha}\leq K_1\prod_{j=1}^{d}e^{K_2T^\gamma (\|B^{j}\|_{\alpha})^\gamma}\sum_{k=1}^{d}\|B^{k}\|_{\alpha}.\notag\end{equation}
Lemma \ref{Fernique} then guarantees the existence of moments of any order for this random variable, so that the desired conclusion follows from Proposition \ref{integralProcesses}.
\end{proof}

\subsection{Multiple Wiener-It\^o integrals}\label{MWI}
Assume $H>\frac{1}{2}$ and, for simplicity, $d=m=1$.
Let $k\geq 1$ be an integer and let $f_k:[0,T]^{k+1}\rightarrow\mathbb{R}$ be measurable and symmetric in the  first $k$ variables (this latter condition is of course immaterial when $k=1$). 
Assume finally that $f_k(x_1,\ldots,x_k,s)=0$ if $x_l>s$ for at least one $l$. In that setting, Theorems \ref{main1} and \ref{main2} apply.

\begin{proposition}\label{convergenceWienerIto}\textit{ 
Let the previous notation prevail, as well as the notation from Section 2.2.
\begin{enumerate}
\item
Assume that $f_k$ is $\alpha$-H\"older continuous on $$\mathfrak D=\{(x_1,\ldots,x_k,s)\in [0,T]^{k+1}, s\geq \max(x_1,\ldots,x_k)\},$$ for some $\alpha>H$. 
Set $u_s=\delta^k\left(f_k(\cdot,s)\right)$. Then,
uniformly on $[0,T]$ in probability,
\begin{equation}
M_\cdot^{n}\overset{}{\underset{n\rightarrow\infty}{\longrightarrow}}\frac{k}{2}\int_0^\cdot\delta^{k-1}(f_k(.,s,s))ds.\notag\end{equation}
\item Assume $\frac{1}{2}<H\leq \frac{3}{4}$. Assume moreover that the hypothesis of the previous point holds, and that in addition $$f_k(x_1,\ldots,x_k,s)=g_k(x_1,\ldots,x_k)\mathbb{I}_{[0,s]^k}(x_1,\ldots,x_k)$$ with $g_k$ symmetric and $\beta$-H\"older continuous for some $\beta>\frac{1}{2}$. Then,
stably in $\mathcal{C}_{\mathbb{R}}([0,T])$ and with $W$ an independent standard Brownian motion,
\begin{eqnarray}
&&\nu_H(n)\left(M_\cdot^{n}-\frac{k}{2}\int_0^\cdot\delta^{k-1}(f_k(\cdot,s,s))ds\right)\notag\\
&\overset{}{\underset{n\rightarrow\infty}\longrightarrow}&
c_H\sqrt{q_H+r_H}
\int_0^\cdot\delta^{k-1}(f_k(\cdot,s,s))dW_{s}, 
\notag
\end{eqnarray}
where $q_H$ and $r_H$ as defined in Section \ref{rosenblatt-and-brownian}.
\end{enumerate}
}
\end{proposition}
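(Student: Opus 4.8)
\textbf{Proof plan for Proposition \ref{convergenceWienerIto}.} The strategy is to verify that the pair $(u,P)$ with $P_s=D_su_s$ lies in $\mathbb{C}_1$ (part (1)) or $\mathbb{C}_2$ (part (2)), and then to invoke Theorem \ref{main2} (resp.\ Theorem \ref{main1}(A), first bullet). By Proposition \ref{multipleWiener}(3), since $u_s=\delta^k(f_k(\cdot,s))$ with $f_k$ symmetric in the first $k$ variables, we have $D_su_s=k\,\delta^{k-1}(f_k(\cdot,s,s))$; this identifies the limiting process $P^{(1,1)}_s=k\,\delta^{k-1}(f_k(\cdot,s,s))$, explaining the factor $\frac{k}{2}$ in front of the Lebesgue integral and the integrand $\delta^{k-1}(f_k(\cdot,s,s))$ against $W$. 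The bulk of the work is the estimate on
$$
L_{s,t}=\int_s^t\Big\{u_l-u_s-D_su_s\,(B_l-B_s)\Big\}dB_l,
$$
for which I would first use Proposition \ref{traceClass} to convert the Young/forward integral defining $L_{s,t}$ into a Skorohod integral plus a trace term, then expand $u_l-u_s-D_su_s(B_l-B_s)$ using the product formula (Proposition \ref{multipleWiener}(4)) to write everything as a finite sum of multiple Wiener--It\^o integrals of orders between $k-1$ and $k+1$ whose kernels are built from $f_k$ and indicator functions of $[s,l]$.

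Once $L_{s,t}$ is decomposed into a sum $\sum_q \delta^q(h_q(s,t))$ of chaoses, the isometry property (Proposition \ref{multipleWiener}(1)) gives
$$
\mathbb{E}[L_{s,t}L_{x,y}] = \sum_q q!\,\langle h_q(s,t),h_q(x,y)\rangle_{\mathcal{H}^{\otimes q}},
$$
and each inner product is bounded, via the $|\mathcal H|$-norm estimates, by integrals of products of increments $|B_l-B_s|$-type kernels against $|\cdot-\cdot|^{2H-2}$ weights. The key point is to extract from the H\"older continuity of $f_k$ (exponent $\alpha>2H-1$ in part (1), exponent $\beta>\frac12$ in part (2)) the right power of $|t-s|$ and $|x-y|$: the ``algebraic'' terms coming from the raw increments produce exactly $f_1(s,t,x,y)=|t-s|^{2H-1}|x-y|^{2H-1}r_H(s,t,x,y)$, while the extra regularity of the kernel (the difference $f_k(\cdot,l,l)-f_k(\cdot,s,s)$ and similar) contributes additional factors $|t-s|^{\text{(something)}}$, $|x-y|^{\text{(something)}}$ that must beat $\kappa_H(|t-s|)\kappa_H(|x-y|)$ in the $\mathbb{C}_2$ case and merely vanish in the $\mathbb{C}_1$ case. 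In part (2), the hypothesis that $f_k=g_k\mathbb{I}_{[0,s]^k}$ is crucial because it makes $\delta^{k-1}(f_k(\cdot,s,s))=\delta^{k-1}(g_k\mathbb{I}_{[0,s]^{k-1}})$ a genuine ``nice'' controlled-type process (its own remainder behaves well), and it forces $H\le\frac34$ into the regime where the Brownian limit $W$ with variance $q_H+r_H$ appears — here I would match the limiting covariance with the constant $q_H+r_H$ defined in Section \ref{rosenblatt-and-brownian} exactly as in \cite{hu2016rate}.

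I also need the regularity hypotheses required by Theorems \ref{main1}--\ref{main2}: that $u$ is $\alpha'$-H\"older for some $\alpha'>1-H$ and that $P$ is a.s.\ continuous (part (1)) or $\beta'$-H\"older with $\beta'>\frac12$ and with the relevant finite moments (part (2)). For $u_s=\delta^k(f_k(\cdot,s))$, $u_t-u_s=\delta^k(f_k(\cdot,t)-f_k(\cdot,s))$, so by hypercontractivity (Proposition \ref{multipleWiener}(2)) its $L^p$ norm is controlled by $\|f_k(\cdot,t)-f_k(\cdot,s)\|_{\mathcal H^{\otimes k}}$, which the H\"older assumption on $f_k$ bounds by a constant times $|t-s|^{\alpha\wedge H}$ (using the $|\mathcal H|$-embedding of Proposition \ref{boundsSkorokhod}(1) and that the supports are contained in $[0,T]$); a Kolmogorov continuity argument then upgrades this to pathwise H\"older regularity with any exponent below $\alpha\wedge H$, which exceeds $1-H$ under the standing assumption $\alpha>2H-1$. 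The same reasoning applied to $P_s=k\delta^{k-1}(f_k(\cdot,s,s))$ gives its H\"older regularity and moments. The main obstacle I anticipate is the bookkeeping in the chaos decomposition of $L_{s,t}$ together with the sharp extraction of the powers of $|t-s|$, $|x-y|$ from the kernel estimates — in particular showing the ``diagonal'' contraction terms (the traces produced both by Proposition \ref{traceClass} and by the order-$r$ contractions in the product formula) are of the right order; this is where the precise value of the threshold $\alpha>2H-1$, and the dichotomy $\mathbb{C}_1$ versus $\mathbb{C}_2$, actually gets used.
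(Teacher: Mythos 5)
Your plan follows essentially the same route as the paper's proof: take $P_s=D_su_s=k\,\delta^{k-1}(f_k(\cdot,s,s))$, decompose $u_t-u_s-D_su_s(B_t-B_s)$ via the product formula into chaos components, convert the forward integral to a Skorohod integral plus a trace term with Proposition \ref{traceClass}, bound $\mathbb{E}[L_{s,t}L_{x,y}]$ term by term using the isometry, the $|\mathcal H|$-norm estimates and Lemma \ref{simpleInequality}, and obtain the H\"older regularity of $u$ and $P$ from hypercontractivity together with Kolmogorov and Garsia--Rodemich--Rumsey, exactly as in the paper. One minor slip: $\alpha>2H-1$ does not imply $\alpha\wedge H>1-H$ when $H<\frac{2}{3}$, but this only concerns the pathwise (Young) interpretation of the integral, which the paper sidesteps by working with the Skorohod-plus-trace representation.
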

\noindent\textit{Proof.} We only do the proof of point (2), since the proof of point $(1)$ (which requires to show that $(u,P)$ with $P_s:=D_su_s$ verifies the assumptions of Theorem \ref{main1}) is very similar and easier. 
Before going into the details, let us explain the main steps we are going to follow:
\begin{itemize}
\item in the first step, we show that $u$ and $P$ are $\beta'$-H\"older continuous for some $\beta'>\frac{1}{2}>1-H$;
\item in the second step, we provide a suitable decomposition of $L_{s,t}L_{x,y}$.  We recall that $L_{s,t}$ is defined as \begin{equation}\label{*}L_{s,t}=\int_{s}^t\left(u_l-u_s-D_su_s(B_l-B_s)\right)dB_l;\end{equation}
\item finally, in the remaining steps, we analyze each term of the previous decomposition and show that the stuctural condition (\ref{pseudo}) is verified, i.e,
for all $0\leq s\leq t\leq T$ and all $0\leq x\leq y\leq T$,
\begin{equation}\label{remindPseudo}
 \mathbb{E}\left[L_{s,t}L_{x,y}\right]=o_{|s-t|+|x-y|\rightarrow 0}(f_2(s,t,x,y))\quad\mbox{uniformly in $s,t\in[0,T]$}.
\end{equation}
\end{itemize}

\noindent
\textit{{\underline{Step 1: H\"older continuity}}}.
The process $u$ is adapted with respect to $B$ and belongs to $\mathbb{D}^{1,2}(|\mathcal{H}|)$ with $D_su_t=k\delta^{k-1}\left(f_k(.,s,t)\right)\mathbb{I}_{s\leq t}$ by Propostion \ref{multipleWiener}. Using the hypercontractivity and isometry properties (again Proposition \ref{multipleWiener}), we obtain, for $a>1$ and $s\leq t$,
\begin{eqnarray*}
\mathbb{E}[|u_s-u_t|^a]&\leq& C_{k,a}\mathbb{E}[(u_s-u_t)^2]^{\frac{a}{2}}\\
&=& C_{k,a}\| f_k(\cdot,s)-f_k(\cdot,t)\|^a_{\mathcal H^{\otimes k}}\\
&\leq&C_{k,a}\| f_k(\cdot,s)-f_k(\cdot,t)\|^a_{|\mathcal H|^{\otimes k}},
\end{eqnarray*} 
thanks to the continuous embedding $|\mathcal H|^{\otimes k}\subset \mathcal H^{\otimes k}$ in the last line.

Let $\Delta_{s,t} f_k(\cdot)=f_k(\cdot,t)-f_k(\cdot,s)$.
We have
\begin{eqnarray*} 
&&\| f_k(\cdot,s)-f_k(\cdot,t)\|^2_{|\mathcal H|^{\otimes k}} \\
&=& c^k_H\int_{[0,t]^{2k}}|\Delta_{s,t} f_k(x)||\Delta_{s,t} f_k(y)|\prod_{m=1}^{k}|x_m-y_m|^{2H-2}dx_mdy_m \\ 
&=& c^k_H\sum_{i,j=1}^k
\int_{[0,t]^{2k}}
\mathbb{I}_{[s,t]}(x_i)\mathbb{I}_{[s,t]}(y_j)
|g_k(x)||g_k(y)|
\prod_{m=1}^{k}|x_m-y_m|^{2H-2}dx_mdy_m \\
&=& c^k_H\sum_{\substack{i,j=1\\i\neq j}}^k
\int_{[0,t]^{2k}}
\mathbb{I}_{[s,t]}(x_i)\mathbb{I}_{[s,t]}(y_j)
|g_k(x)||g_k(y)|\\
&&\times\left(\prod_{\substack{m=1\\m\neq i,j}}^k|x_m-y_m|^{2H-2}dx_mdy_m\right)(|x_i-y_i||x_j-y_j|)^{2H-2}dx_idy_idx_jdy_j\\
&&+c^{k}_H\sum_{i=1}^k
\int_{[0,t]^{2k}}
\mathbb{I}_{[s,t]}(x_i)\mathbb{I}_{[s,t]}(y_i)
|g_k(x)||g_k(y)|\\
&&\times\left(\prod_{\substack{m=1\\m\neq i}}^k|x_m-y_m|^{2H-2}dx_mdy_m\right)
|x_i-y_i|^{2H-2}dx_idy_i.
\end{eqnarray*}
From Lemma \ref{simpleInequality}), we have 
$$\int_{[0,t]\times[s,t]}|x-y|^{2H-2}dxdy\leq K|t-s|$$ for some constant $K$.
Note that we take the liberty to change the value of $K$ from line to line in the rest of the proof. 
We deduce, for $i\neq j$, that
\begin{eqnarray*}
&&c_H^k\int_{[0,t]^{2k}}
\mathbb{I}_{[s,t]}(x_i)\mathbb{I}_{[s,t]}(y_j)
|g_k(x)||g_k(y)|
\prod_{\substack{m=1\\m\neq i,j}}^k|x_m-y_m|^{2H-2}dx_mdy_m\\
&\leq& \|g_k\|^2_{\infty}|t-s|^2c_H^{k-2}\int_{[0,t]^{2k-2}}
\prod_{\substack{m=1\\m\neq i,j}}^k|x_m-y_m|^{2H-2}dx_mdy_m.
\end{eqnarray*}
As a result,
\begin{equation*}
\| f_k(\cdot,s)-f_k(\cdot,t)\|^2_{|\mathcal H|^{\otimes k}}\leq K\|g_k\|_{\infty}^2\big( (k-1)^2|t-s|^2t^{2H(k-2)}+k|t-s|^{2H}t^{2H(k-1)}\big).
\end{equation*}
Since $|t-s|\leq K|t-s|^{H}$ on $[0,T]^2$, this leads to
\begin{equation*}
\mathbb{E}[|u_s-u_t|^a]\leq K\|g_k\|^a_{\infty}|t-s|^{Ha}.
\end{equation*}
We can show a similar bound for the derivative $Du$:
\begin{eqnarray*}
&&\mathbb{E}[|D_su_s-D_tu_t|^a]\leq\mathbb{E}[|D_su_s-D_su_t|^a]+\mathbb{E}[|D_su_t-D_tu_t|^a]\\&\leq& C_{k-1,a}\left(\| f_k(\cdot,s,s)-f_k(\cdot,s,t)\|^a_{|\mathcal H|^{\otimes k-1}}+\| f_k(\cdot,s,t)-f_k(\cdot,t,t)\|^a_{|\mathcal H|^{\otimes k-1}}\right)
\end{eqnarray*}
and 
\begin{eqnarray*} 
&&\| f_k(\cdot,s,s)-f_k(\cdot,s,t)\|^2_{|\mathcal H|^{\otimes k-1}}+\| f_k(\cdot,s,t)-f_k(\cdot,t,t)\|^2_{|\mathcal H|^{\otimes k-1}}\\
&\leq& K\|g_k\|^2_{\infty}(k|t-s|^{2H}t^{2H(k-1)}+(k-1)^2t^{2H(k-2)}|t-s|^2)\\
&&+t^{2Hk}\|g_k\|^2_{\beta}|t-s|^{2\beta},
\end{eqnarray*}
where $\|g_k\|_{\beta}$ is the H\"older seminorm of $g_k$ over $[0,T]^k$. Then,
\begin{equation*}
\mathbb{E}[|D_su_s-D_tu_t|^a]\leq K(\|g_k\|^a_{\infty}|t-s|^{Ha}+\|g_k\|^a_\beta|t-s|^{\beta a}).
\end{equation*}
 Finally, for all $a>1$ we have
\begin{eqnarray*}
\mathbb{E}[|u_s-u_t|^a+|D_su_s-D_tu_t|^a]&\leq& C\left(|t-s|^{aH}+|t-s|^{a\beta}\right)\\
&=&C\left(|t-s|^{a'+1}+|t-s|^{a''+1}\right),
\end{eqnarray*}
with $a'=aH-1, a''=a\beta-1$  and the constant $C$ depending on $k,a,\|g_k\|_{\infty}$ and $T$.\\
Observe that $\frac{a'}{a}\rightarrow H$ and $\frac{a''}{a}\rightarrow\beta$ when $a\rightarrow\infty$.
 The Kolmogorov-Censov  criterion applies and yields that $u$ and $s\rightarrow D_su_s$ verifies the H\"older seminorm condition in Theorem \ref{main2}, namely:
 $u$ and $P$ are $\beta'$-H\"older continuous for all $\beta'$ such that $\beta\wedge H>\beta'>\frac{1}{2}>1-H$.\\

\noindent\textit{{\underline{Step 2: Decomposition of $L_{s,t}L_{x,y}$}}}
(recall the definition of $L$ from (\ref{*})).
 The product formula \eqref{product} yields, for $s\leq t$,
\begin{eqnarray*}
k\delta^{k-1}\left(f_k(\cdot,s,s)\right)\left(B_t-B_s\right)&=&k\delta^k\left(\widetilde{f_k(\cdot,s,s)\otimes\mathbb{I}_{[s,t]}(\cdot\cdot)}\right)\\
&+&k(k-1)\delta^{k-2}\left(f_k(\cdot,s,s)\otimes_1\mathbb{I}_{[s,t]}\right).
\end{eqnarray*}
Then, $u_t-u_s-D_su_s(B_t-B_s)=A_{s,t}-C_{s,t}$, with
\begin{equation}\label{AandB}\left\{
      \begin{array}{cll}
        A_{s,t}&=&\delta^k\left(f_k(\cdot,t)-f_k(\cdot,s)-k\widetilde{f_k(\cdot,s,s)\otimes\mathbb{I}_{[s,t]}(\cdot\cdot)}\right)\\
        C_{s,t}&=&k(k-1)\delta^{k-2}\left(f_k(\cdot,s,s)\otimes_1\mathbb{I}_{[s,t]}\right).
      \end{array}
    \right.
    \end{equation}

Notice that $C_{s,t}=0$ when $k=1$. We also use the convention that $A_{s,t}=C_{s,t}=0$ if $s>t$.\\
One can see\footnote{Indeed, assuming $g_k=1$, i.e $f_k(x_1,\ldots,x_{k+1})=\mathbb{I}_{[0,x_{k+1}]^{k}}(x_1,\ldots,x_k)$ we can write
$C_{s,t}=\delta^{k-2}\left(\mathbb{I}_{[0,s]^{k-2}}(\cdot)\int_0^s\int_s^t|l-r|^{2H-2}dldr\right)=r^H(0,s,s,t)\delta^{k-2}(\mathbb{I}_{[0,s]^{k-2}}).$
Since $r^H(0,s,s,t)>s|t-s|$ thanks to Lemma \ref{simpleInequality}, we have $\left|\frac{C_{s,t}}{|t-s|^{2\kappa}}\right|\geq \frac{s|\delta^{k-2}(\mathbb{I}_{[0,s]^{k-2}})|}{|t-s|^{2\kappa-1}}$ for any $\kappa>\frac{1}{2}$. We have $\delta^{k-2}(\mathbb{I}_{[0,s]^{k-2}})=H_{k-2}(B_s)$ (with $H_k$ the $k$-th Hermite polynomial). Since $B$ as a Gaussian law, there is a real number $l>0$ and a set $\Omega_0\subset\Omega$ such that $\mathbb{P}(\Omega_0)>0$ and $\forall \omega\in \Omega_0, |\delta^{k-2}(\mathbb{I}_{[0,s]^{k-2}})(\omega)|>l.$
Then $\left|\frac{C_{s,t}(\omega)}{|t-s|^{2\kappa}}\right|\underset{s\rightarrow t}\longrightarrow +\infty$ for all fixed $s>0$ and $\omega\in \Omega_0$.} that $(u,P)$ does not \textit{a priori} belongs to $\mathcal{D}^{2\kappa}$ for some $\kappa>\frac{1}{2}$, and therefore we cannot directly apply the results of  Section \ref{sec:Controlled paths}.

To prove that $(u,P)\in\mathbb{C}_2$ we will proceed as follows.

The hypothesis of Proposition \ref{traceClass} are verified by $A$ and $C$. Indeed, $A_{s,\cdot},C_{s,\cdot}\in\mathbb{D}^{1,2}(|\mathcal H|)$ for all $s\in [0,T]$. Moreover, using the same arguments as in Step 1, one can show that $DA_{s,\cdot}$ and $DC_{s,\cdot}$ have almost continuous paths in $[0,T]^2$, implying in turn that
$$\int_0^T\int_0^T(|D_wA_{s,l}|+|D_wC_{s,l}|)|l-w|^{2H-2}dldw<\infty \quad\mbox{a.s.}$$
for all $s\in [0,T]$.

Formula \eqref{fracMal} allows to write
\begin{equation} \int_s^tC_{s,l}dB_l=\delta\left(C_{s,\cdot}\times\mathbb{I}_{[s,t]}(\cdot)\right)+c_H\int_s^t\int_0^tD_wC_{s,l}|l-w|^{2H-2}dwdl\label{decompC}\end{equation}
as well as
\begin{equation} \int_s^tA_{s,l}dB_l=\delta\left(A_{s,\cdot}\times\mathbb{I}_{[s,t]}(\cdot)\right)+c_H\int_s^t\int_0^tD_wA_{s,l}|l-w|^{2H-2}dwdl\label{decompC}.\end{equation}
For any $0\leq s\leq t\leq T$ and $0\leq x\leq y\leq T$, we can then write 
$L_{s,t}L_{x,y}=\sum_{i,j=1}^4R^{i,j}(s,t,x,y)$, with
\begin{eqnarray*}
R^{1,1}(s,t,x,y)&=&\delta\left(C_{s,\cdot}\mathbb{I}_{[s,t]}(\cdot)\right)\delta\left(C_{x,\cdot}\mathbb{I}_{[x,y]}(\cdot)\right)\\
R^{1,2}(s,t,x,y)
&=&c_H^2\int_s^tdl\int_0^tdw\int_x^ydr\int_0^ydz\\
&&\hskip3cm\times D_wC_{s,l}D_zC_{{x,r}}|w-l|^{2H-2}|z-r|^{2H-2}\\
R^{1,3}(s,t,x,y)&=&R^{1,4}(x,y,s,t)\\
&=&c_H\delta\left(C_{s,\cdot}\mathbb{I}_{[s,t]}(\cdot)\right)\int_x^y\int_0^yD_wC_{x,l}|l-w|^{2H-2}dwdl\\
R^{2,1}(s,t,x,y)&=&\delta\left(A_{s,\cdot}\mathbb{I}_{[s,t]}(\cdot)\right)\delta\left(A_{x,\cdot}\mathbb{I}_{[x,y]}(\cdot)\right)\\
R^{2,2}(s,t,x,y)&=&c_H^2\int_s^tdl\int_0^tdw\int_x^ydr\int_0^ydz\\
&&\hskip3cm\times D_wA_{s,l}D_zA_{{x,r}}|w-l|^{2H-2}|z-r|^{2H-2}\\
R^{2,3}(s,t,x,y)&=&R^{2,4}(x,y,s,t)\\
&=&c_H\delta\left(A_{s,\cdot}\mathbb{I}_{[s,t]}(\cdot)\right)\int_x^y\int_0^yD_wA_{x,l}|l-w|^{2H-2}dwdl\\
R^{3,1}(s,t,x,y)&=&R^{4,1}(x,y,s,t)\\
&=&\delta\left(A_{s,\cdot}\mathbb{I}_{[s,t]}(\cdot)\right)\delta\left(C_{x,\cdot}\mathbb{I}_{[x,y]}(\cdot)\right)\\
R^{3,2}(s,t,x,y)&=&R^{4,2}(x,y,s,t)\\
&=&\delta\left(A_{s,\cdot}\mathbb{I}_{[s,t]}(\cdot)\right)\int_x^y\int_0^yD_wC_{x,l}|l-w|^{2H-2}dwdl\\
R^{3,3}(s,t,x,y)&=&R^{4,3}(x,y,s,t)\\
&=&\delta\left(C_{s,\cdot}\mathbb{I}_{[s,t]}(\cdot)\right)\int_x^y\int_0^yD_wA_{x,l}|l-w|^{2H-2}dwdl\\
R^{3,4}(s,t,x,y)&=&R^{4,4}(x,y,s,t)\\
&=&c_H^2\int_s^tdl\int_0^tdw\int_x^ydr\int_0^ydz\\
&&\hskip3cm\times D_wC_{s,l}D_zA_{{x,r}}|w-l|^{2H-2}|z-r|^{2H-2}.
\end{eqnarray*}
We can easily check that 
\begin{equation*}
\mathbb{E}\left[R^{1,3}\right]=\mathbb{E}\left[R^{2,3}\right]=\mathbb{E}\left[R^{3,1}\right]=\mathbb{E}\left[R^{3,2}\right]=\mathbb{E}\left[R^{3,4}\right]=0.
\end{equation*}
Indeed, these expectations reduce to a sum of expectations of products of two multiple Wiener integrals of different orders, which are orthogonal in $L^2(\Omega)$ by Proposition \ref{multipleWiener}. 
More precisely, Lemma \ref{expectProduct} allows to show that all the  expectations in play vanish. For example,
$$\mathbb{E}[R^{1,3}]=c_H\int_x^y\int_0^y\mathbb{E}[\delta(C_{s,\cdot}\mathbb{I}_{[s,t]}(\cdot))D_wC_{x,l}]|l-w|^{2H-2}dwdl$$
which corresponds exactly to a term of the form (\ref{e1}).

We will now apply Proposition \ref{traceClass}, together with several inequalities, to show that all the remaining terms satisfy the condition (\ref{pseudo}), namely 
$$\mathbb{E}[R^{i,j}(s,t,x,y)]=o_{|t-s|+|x-y|\rightarrow 0}(f_2(s,t,x,y))$$ 
for all $(i,j)\in \{(1,1), (1,2), (2,1), (2,2), (3,3), (4,3)\}$ and uniformly in $[0,T]^2$.
(Starting from now, note that every time we write $o_{|t-s|+|x-y|\rightarrow 0}(f_2(s,t,x,y))$, it is implicitely assumed that it takes place uniformly in $s,t\in [0,T]$.)\\

Whatever the value of $(i,j)\in \{(1,1), (1,2), (2,1), (2,2), (3,3), (4,3)\}$, deriving a bound for $\EE[R^{i,j}]$ requires similar arguments. 
For this reason, in what follows we will fully develop the cases $(i,j)= (1,1)$, $(i,j)= (1,2)$ and $(i,j)=(2,1)$, then we will only explain the differences for the remaining cases.
\smallbreak For notational simplicity, we will also write $R^{i,j}$ instead of $R^{i,j}(s,t,x,y)$.\\

\noindent\textit{\underline{Step 3: Bound on $\EE[R^{1,2}]$}}.
First, we give an upper bound for $\mathbb{E}[(D_wC_{s,l})^2]$: for all $w\in [0,t]$ and $l\in [s,t]$,
\begin{eqnarray}\label{intermediate}
\mathbb{E}\left[\left(D_wC_{s,l}\right)^2\right]&=&k^2(k-1)^2(k-2)^2\,\mathbb{E}\left[\left(\delta^{k-3}\left(f_k(\cdot,w,s,s)\otimes_1\mathbb{I}_{[s,l]}\right)\right)^2\right]\notag\\ 
&=&k!k(k-1)(k-2)\left\|f_k(\cdot,w,s,s)\otimes_1\mathbb{I}_{[s,l]}\right\|^2_{\mathcal H^{\otimes k-3}}\notag\\
&\leq& k!k(k-1)(k-2)\|g_k\|^2_{\infty}\left\|\mathbb{I}_{[0,T]^{k-2}}\otimes_1\mathbb{I}_{[s,l]}\right\|^2_{|\mathcal H|^{\otimes k-3}},\notag
\end{eqnarray}
where, in the last inequality, we have used that $|h_1\otimes_1 h_2|\leq |h_1|\otimes_1|h_2|$ for all $h_1,h_2\in\mathbb{D}^{1,2}(|\mathcal H|)$.
Moreover, according to Lemma \ref{simpleInequality},
\begin{equation*}
\big|\mathbb{I}_{[0,T]^{k-2}}\otimes_1\mathbb{I}_{[s,l]}\big|=
\big|\mathbb{E}[B_T(B_s-B_l)]\mathbb{I}_{[0,T]^{k-3}}\big|
\leq K|s-l|\mathbb{I}_{[0,T]^{k-3}}.\end{equation*} 
Plugging this identity into \eqref{intermediate} leads to
\begin{equation*}\mathbb{E}\left[(D_wC_{s,l})^2\right]\leq K|s-l|^2.
\end{equation*}
As a result, and using the H\"older inequality, we have, for all $s\leq t$ and $x\leq y$,
\begin{eqnarray*}
&&\left|\mathbb{E}\left[
\int_s^t\int_0^tD_wC_{s,l}|w-l|^{2H-2}dwdl\int_x^y\int_0^yD_wC_{x,l}|w-l|^{2H-2}dwdl
\right]\right|\\
&\leq&
\int_s^t\int_0^t\int_x^y\int_0^y\EE\left[\left(D_wC_{s,l}\right)^2\right]^\frac{1}{2}\EE\left[\left(D_zC_{x,r}\right)^2\right]^\frac{1}{2}\\&&\times|w-l|^{2H-2}|z-r|^{2H-2}dzdrdwdl\\
&\leq& K|t-s|^2|x-y|^2=o_{|t-s|+|x-y|\rightarrow 0} f_2(s,t,x,y)
\end{eqnarray*} 
where, in the last identity, we made use of the following two facts: on one hand
$|t-s||x-y|\leq r_H(s,t,x,y)$ according to Lemma \ref{simpleInequality}; on the other hand, and since $H\leq \frac34$,
$$ |t-s||x-y|= o_{|t-s|+|x-y|\rightarrow0}\big(|t-s|^{2H-1}|x-y|^{2H-1}\kappa_H(|x-y|)\kappa_H(|t-s|)\big).$$

\noindent\textit{\underline{Step 4: Bound on $\EE[R^{1,1}]$}}.
This term can be handled similarly, with the help of Proposition \ref{boundsSkorokhod}:
\begin{eqnarray*}
&&\left|\EE\left[\delta(C_{s,\cdot}\mathbb{I}_{[s,t]}(\cdot))\delta(C_{x,\cdot}\mathbb{I}_{[x,y]}(\cdot))\right]\right|\\
&\leq&\int_{[s,t]\times[x,y]}\mathbb{I}_{[s,t]}(l)\mathbb{I}_{[x,y]}(r)|\EE[C_{s,l}C_{x,r}]||l-r|^{2H-2}drdl\\
&&+\int_{[s,t]\times[x,y]}\int_{[0,s]\times[0,t]}|\EE[D_wC_{s,l}D_zC_{x,r}]||l-r|^{2H-2}|z-w|^{2H-2}dzdwdrdl\\
&\leq& K\|g_k\|^2_{\infty}|t-s||x-y|r_H(s,t,x,y),
\end{eqnarray*}
where $\EE[C_{s,l}C_{x,r}]$ and $\EE[D_uC_{s,l}D_vC_{x,r}]$ are computed by means of Proposition \ref{multipleWiener}. Again, $|t-s||x-y|r_H(s,t,x,y)=o_{|t-s|+|x-y|\rightarrow 0}(f_2(s,t,x,y))$.\\

\noindent\textit{\underline{Step 5: Bound on $\EE[R^{2,1}]$}}.
Using Proposition \ref{boundsSkorokhod}, we can write

\begin{eqnarray*}
&&|\mathbb{E}[R^{2,1}]|=\left|\EE\left[\delta(A_{s,\cdot}\mathbb{I}_{[s,t]}(\cdot))\delta(A_{x,\cdot}\mathbb{I}_{[x,y]}(\cdot))\right]\right|\\
&\leq&\int_{[s,t]\times[x,y]}dldr\mathbb{I}_{[s,t]}(l)\mathbb{I}_{[x,y]}(r)\,\big|\EE[A_{s,l}A_{x,r}]\big|\,|l-r|^{2H-2}\\
&+&\int_{[s,t]\times[x,y]}dldr\int_{[0,t]\times[0,y]}dwdz\,\big|\EE[D_wA_{s,l}D_zA_{x,r}]\big|\,|l-r|^{2H-2}|z-w|^{2H-2}.
\end{eqnarray*}
Let us define the following function:
\begin{eqnarray*}
&&h^s_{k}(x_1,\ldots,x_k,l)\\
&:=&\sum_{i=1}^{k}\mathbb{I}_{[0,s]^{k-1}}(x_1,\ldots,x_{i-1},x_{i+1},\ldots,x_k)\mathbb{I}_{[s,l]}(x_i)\\
&&\times\left(g_k(x_1,\ldots,x_k)-g_{k}(x_1,\ldots,x_{i-1},x_{i+1},\ldots,x_{k},s)\right)\\
&&+g_k(x_1,\ldots,x_k)\sum_{i=1}^k\mathbb{I}_{[s,l]}(x_i)\mathbb{I}_{[0,l]^{k-1}\setminus[0,s]^{k-1}}(x_1,\ldots,x_{i-1},x_{i+1},\ldots,x_n).
\end{eqnarray*}
Since $s\leq l$, we have:
\begin{eqnarray*}
&&f_k(x_1,\ldots,x_k,l)-f_k(x_1,\ldots,x_k,s)\\&=&g_k(x_1,\ldots,x_k)\sum_{i=1}^k\mathbb{I}_{[s,l]}(x_i)\mathbb{I}_{[0,s]^{k-1}}(x_1,\ldots,x_{i-1},x_{i+1},\ldots,x_k)\\
&&+g_k(x_1,\ldots,x_k)\sum_{i=1}^k\mathbb{I}_{[s,l]}(x_i)\mathbb{I}_{[0,l]^{k-1}\setminus[0,s]^{k-1}}(x_1,\ldots,x_{i-1},x_{i+1},\ldots,x_k)
\end{eqnarray*}
and
\begin{eqnarray*}
&&k\widetilde{f_k(\cdot,s,s)\otimes\mathbb{I}_{[s,l]}}(x_1,\ldots,x_k)\\
&=&\sum_{i=1}^k\mathbb{I}_{[s,l]}(x_i)\mathbb{I}_{[0,s]^{k-1}}g_k(x_1,\ldots,x_{i-1},x_{i+1},\ldots,x_{k},s).
\end{eqnarray*}
We obtain, for all $x_1,\ldots,x_k\in [0,T]$, that
\begin{eqnarray*}
&&f_k(x_1,\ldots,x_k,l)-f_k(x_1,\ldots,x_k,s)-k\widetilde{f_k(\cdot,s,s)\otimes\mathbb{I}_{[s,l]}}(x_1,\ldots,x_k)\\
&=& h^s_{k}(x_1,\ldots,x_k,l).
\end{eqnarray*}
 Then, $A_{s,l}=\delta^k(h^s_k(\cdot,l)),\,A_{x,r}=\delta^k(h^x_k(\cdot,r))$ and, by Proposition \ref{multipleWiener} (isometry),
\begin{eqnarray*}
&&\int_{[s,t]\times[x,y]}\mathbb{I}_{[s,t]}(l)\mathbb{I}_{[x,y]}(r)\big|\EE[A_{s,l}A_{x,r}]\big|\,|l-r|^{2H-2}drdl\\
&\leq&k!c_H^{k+1}\int_s^tdl\int_x^ydr\int_{[0,t]^{k}\times[0,y]^k}\,|h^s_{k}(x_1,\ldots,x_{k},l)|\,\left|h^x_{k}(y_1,\ldots,y_{k},r)\right| \\
&&\times\prod_{i=1}^{k}|x_i-y_i|^{2H-2}|l-r|^{2H-2}dx_1\ldots dx_{k}dy_1\ldots dy_{k}.\\
\end{eqnarray*}
On the other hand, observe the following facts:
\begin{itemize}
\item $h^s_k(x_1,\ldots, x_k)=0$ if $(x_1,\ldots, x_k)\in [0,s]^k$;
\item if there is a unique index $i$ such that $x_i\in [s,l],$ then 
\begin{eqnarray*}
&&|h^s_k(x_1,\ldots, x_k)|=|g_k(x_1,\ldots,x_k)-g_k(x_1,\ldots, x_{i-1},x_{i+1},\ldots,x_k,s)|\\
&\leq& \|g_k\|_{\beta}|s-l|^{\beta};
\end{eqnarray*}
\item if there is more than one index $i$ such that $x_i\in [s,l]$, then
$$|h^s_{k}(x_1,\ldots,x_k,l)|\leq \|g_k\|_{\infty}\mathbb{I}_{[0,l]}(x_1,\ldots,x_k)\sum_{i\neq j=1}^k\mathbb{I}_{[s,l]^2}(x_i,x_j).$$
\end{itemize}
As a result,
\begin{eqnarray*}
&&|h^s_k(x_1,\ldots,x_k,l)|\\
&\leq&\sum_{i=1}^{k-1}\mathbb{I}_{[0,s]^k}(x_1,\ldots,x_{i-1},x_{i+1},\ldots,x_k)\mathbb{I}_{[s,l]}(x_i)|x_i-s|^{\beta}\|g_k\|_{\beta}\\
&&+\|g_k\|_{\infty}\sum_{i\neq j=1}^kg_k(x_1\ldots,x_k)\mathbb{I}_{[s,l]^2}(x_i,x_j)\mathbb{I}_{[0,l]^k}(x_1\ldots,x_k).
\end{eqnarray*}
We then have
\begin{eqnarray*}
&&c_H^{k+1}\int_s^t\int_x^y\int_{[0,l]^{k}\times[0,v]^k}|h^s_{k}(x_1,\ldots,x_{k},l)|\left|h^x_{k}(y_1,\ldots,y_{k},r)\right| \\
&&\times\prod_{i=1}^{k}|x_i-y_i|^{2H-2}|l-r|^{2H-2}dx_1\ldots dx_{k}dy_1\ldots dy_{k}drdl\\
&\leq& (A+B+C+D)r_H(s,t,x,y),
\end{eqnarray*}
with
\begin{eqnarray*}
&A=&c_H^k\|g_k\|^2_{\beta}|t-s|^\beta|x-y|^\beta\sum_{i,j=1}^k\int_{[0,t]^{k}\times[0,y]^k}\mathbb{I}_{[s,t]}(x_i)\mathbb{I}_{[x,y]}(y_j) \\
&&\times\prod_{m=1}^{k}|x_m-y_m|^{2H-2}dx_1\ldots dx_{k}dy_1\ldots dy_{k}\\
&B=&c_H^k\|g_k\|^2_{\infty}\sum_{i_i\neq i_2,j_1\neq j_2=1}^k\int_{[0,t]^{k}\times[0,y]^k}\mathbb{I}_{[s,t]^2}(x_{i_1},x_{i_2})\mathbb{I}_{[x,y]^2}(y_{j_1},y_{j_2}) \\
&&\times\prod_{m=1}^{k}|x_m-y_m|^{2H-2}dx_1\ldots dx_{k}dy_1\ldots dy_{k}
\end{eqnarray*}
\begin{eqnarray*}
&C=&c_H^k|x-y|^{\beta}\|g_k\|_{\beta}\|g_k\|_{\infty}\sum_{i_i\neq i_2,j=1}^k\int_{[0,t]^{k}\times[0,y]^k}\mathbb{I}_{[s,t]^2}(x_{i_1},x_{i_2})\mathbb{I}_{[x,y]}(y_j) \\
&&\times\prod_{m=1}^{k}|x_m-y_m|^{2H-2}dx_1\ldots dx_{k}dy_1\ldots dy_{k}\\
&D=&c_H^k|t-s|^{\beta}\|g_k\|_{\beta}\|g_k\|_{\infty}\sum_{i,j_1\neq j_2=1}^k\int_{[0,t]^{k}\times[0,y]^k}\mathbb{I}_{[s,t]}(x_i)\mathbb{I}_{[x,y]^2}(y_{j_1},y_{j_2}) \\
&&\times\prod_{m=1}^{k}|x_m-y_m|^{2H-2}dx_1\ldots dx_{k}dy_1\ldots dy_{k}.\\
\end{eqnarray*}
We only write down the details for the upper bound  of $A$, since the technique is similar for the three other terms.

Two cases should then be analyzed to handle the integral $A$:
\begin{itemize}
\item $i\neq j$:
\begin{eqnarray*}
&&c_H^k\int_{[0,t]^{k}\times[0,y]^k}\mathbb{I}_{[s,t]}(x_{i})\mathbb{I}_{[x,y]}(y_{j})\times\prod_{m=1}^{k}|x_m-y_m|^{2H-2}dx_1\ldots dx_{k}dy_1\ldots dy_{k}\\
&=& \mathbb{E}[B_tB_y]^{k-2}\mathbb{E}[B_y(B_t-B_s)]\mathbb{E}[B_t(B_y-B_x)]\leq K^2_TT^{2H(k-2)}|t-s||x-y|,
\end{eqnarray*}
where the last inequality follows from Lemma \ref{simpleInequality}.\\
\item $i= j$:
\begin{eqnarray*}
&&c_H^k\int_{[0,t]^{k}\times[0,y]^k}\mathbb{I}_{[s,t]}(x_{i})\mathbb{I}_{[x,y]}(y_{j})\times\prod_{m=1}^{k}|x_m-y_m|^{2H-2}dx_1\ldots dx_{k}dy_1\ldots dy_{k}\\
&=& \mathbb{E}[B_tB_y]^{k-1}\mathbb{E}[(B_x-B_y)(B_t-B_s)]\\
&\leq& T^{2H(k-1)}r_H(s,t,x,y)\leq T^{2H(k-1)}|t-s|^H|x-y|^H,
\end{eqnarray*}
where the last inequality comes from Lemma \ref{simpleInequality}. We then have 
\begin{equation*}
A\leq K(|t-s||x-y|+|t-s|^H|x-y|^H)|t-s|^\beta|x-y|^\beta.
\end{equation*}
\end{itemize}

Similar arguments for handling the integrals $B,C,D$ lead to
\begin{eqnarray*}
B&\leq&K(|t-s|^{2H}|x-y|^{2H}+|t-s|^H|x-y|^H|t-s||x-y|+|t-s|^2|x-y|^2)\\
C&\leq&K|x-y|^\beta(|x-y|^H|t-s|^{1+H}+|x-y||t-s|^2)\\
D&\leq&K|t-s|^\beta(|t-s|^H|x-y|^{1+H}+|t-s||x-y|^2).
\end{eqnarray*}

Since $\beta, H>\frac{1}{2}$, we have
\begin{eqnarray*}
&&\left|\int_{[s,t]\times[x,y]}\mathbb{I}_{[s,t]}(l)\mathbb{I}_{[x,y]}(r)|\EE[A_{s,l}A_{x,r}]||l-r|^{2H-2}drdl\right|\\
&\leq& r_H(s,t,x,y)(A+B+C+D)=o_{|t-s|+|x-y|\rightarrow 0}(f_2(s,t,x,y)).
\end{eqnarray*}
\bigskip We have $D_wA_{s,l}=\delta^{k-1}(h_k^s(x_1,\ldots,x_{k-1},u,l))$. Similar computations allow to treat the trace term:
\begin{eqnarray*}&&\int_{[s,t]\times[x,y]}\int_{[0,s]\times[0,t]}|\EE[D_wA_{s,l}D_zA_{x,r}]||l-r|^{2H-2}|z-w|^{2H-2}dzdwdrdl\\&=&o_{|t-s|+|x-y|\rightarrow 0}(f_2(s,t,x,y)).\end{eqnarray*}

Putting all these facts together, we obtain $$\EE[R^{2,1}]=o_{|t-s|+|x-y|\rightarrow 0}(f_2(s,t,x,y)).$$

\bigskip

\noindent\textit{\underline{Step 6: Bound on $\EE[R^{2,2}+R^{3,3}+R^{4,3}]$}}.
We use similar arguments here as in Step 5: we can obtain trough easy but tedious computations, and distinguishing again several cases,
$$\mathbb{E}\left[R^{2,2}+R^{3,3}+R^{4,3}\right]=o_{|t-s|,|x-y|\rightarrow 0}f_2(s,t,x,y).$$

\bigskip

\noindent\underline{\textit{Step 7: Conclusion}}.
We have shown that
\begin{equation*}\mathbb{E}\left[L_{s,t}L_{x,y}\right]=o_{|t-s|,|x-y|\rightarrow0}(f_2(s,t,x,y))
\end{equation*}
implying that $(u,P)\in \mathbb{C}_2$.
\qed

\subsection{Examples in the Brownian motion case}\label{sec:brown}

Since this section only concerns the standard Brownian motion case, in the following  $H=\frac12$.
To illustrate the novelty of our approach compared to that followed by Rootz\'en in \cite{rootzen1980limit}, we develop specific examples which cannot be obtained from \cite{rootzen1980limit}.

In Proposition \ref{integralProcesses}, we considered fractional semimartingales of the form (\ref{integralProcess}). Here, we take advantage of the standard Brownian framework, to consider processes of the form (\ref{18}). Note that the integrand $V^{i, j}_{s,t} $ is allowed to depend on $t$ in  (\ref{18}), making useless to consider a drift term as in (\ref{integralProcess}).

Let $\big((u^i_t)_{t\in[0,T]}\big)_{1\leq i\leq m}$ be a collection of square integrable and progressively measurable processes, i.e. $\mathbb{E}\big[(u^i_t)^2\big]<\infty$ for all $i$ and $t$.
According to the representation theorem for square integrable random variables, for all $i$ and $t$ there exists progressively measurable processes
$\big((V^{i,j}_{s,t})_{0\leq s\leq t}\big)_{1\leq j\leq d}$ such that, for all $i$ and $t$: 
\begin{equation}\label{18}
u^i_t=\mathbb{E}[u^i_t]+\sum_{j=1}^d\int_0^tV^{i,j}_{s,t}dB^j_s\quad\mbox{a.s.},
\end{equation}
and $\mathbb{E}\big[\int_0^t (V^{i,j}_{s,t})^2ds\big]<\infty$.
We assume moreover:
\begin{itemize}
\item[$(\mathfrak H_1)$] $(V_{s,t}^{i,j})_{0\leq s\leq t\leq T}$ is measurable for all $i$ and $j$, and $(i)$ $(s,t)\mapsto V_{s,t}$ has a progressively measurable version, $(ii)$
$\mathbb{E}[|V^{i,j}_{s,s}-V^{i,j}_{s,t}|^2]+\mathbb{E}[|V^{i,j}_{s,s}-V^{i,j}_{t,t}|^2]\underset{s\rightarrow t-}\longrightarrow 0$  for all $i,j$ uniformly in $s\leq t\in [0,T]$ and $(iii)$ $(V^{i,j}_{s,s})_{s\in[0,T]}$ is piecewise continuous.
\item[$(\mathfrak H_2)$] For all $i,j$, the family $\left(|V^{i,j}_{s,t}|\right)_{s,t\in[0,T]}$ is bounded by a square integrable random variable $S$ such that $\mathbb{E}[S^{2+\gamma}]<\infty$ for some $\gamma>0$.
\item[$(\mathfrak H_3)$] One has, for all $0\leq s\leq t\leq T$ and all $i\leq m$ and $j\leq d$
\begin{equation*}  
\mathbb{E}\left[\int_0^s(V^{i,j}_{l,s}-V^{i,j}_{l,t})^{2}dl\right]+\big(\mathbb{E}[u^i_{s}-u^i_{t}]\big)^2\leq |s-t|\mu(s,t),
\end{equation*} 
where $\mu$ is a bounded function which is continuous on $[0,T]^2$ and such that $\mu(s,s)=0$ for all $s\in [0,T]$.
\end{itemize}

As an application of Theorem \ref{main2} (with $P_s=V_{s,s}$), we can state the following proposition. 

\begin{proposition}\label{brownianCriterion} \textit{
Assume $(\mathfrak H_1)-(\mathfrak H_3)$ and recall that $H=\frac12$. Then,  stably in $\mathcal C_{\mathbb{R}^{m\times d}}([0,T])$,
\begin{equation}
\{\sqrt{n}M^{n,(i,j)}_\cdot\}_{1\leq i\leq m,1\leq j\leq d} \overset{}{\underset{n\rightarrow\infty}{\longrightarrow}}\left\{
\sum_{k=1}^d\int_0^\cdot V^{i,k}_{s,s}dW^{k,j}(s)
\right\}_{1\leq i\leq m,1\leq j\leq d},\notag
\end{equation}
where $W$ is the independent matrix-valued Brownian motion of Section \ref{rosenblatt-and-brownian}.
}
\end{proposition}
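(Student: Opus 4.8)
The plan is to deduce the statement from Theorem~\ref{main1}(B) --- more precisely, from its $\mathbb{C}_2^{loc}$ version noted just above, recalling that $\nu_{1/2}(n)=\sqrt n$ --- by exhibiting, from the martingale representation $(\ref{18})$, a process $P$ with $(u,P)\in\mathbb{C}_2^{loc}$ for which the regularity hypotheses of Theorem~\ref{main1}(B) hold. Following the Taylor-expansion heuristic of the introduction, the natural candidate is the diagonal kernel $P^{(i,j)}_s:=V^{(i,j)}_{s,s}$. Subtracting $\sum_k V^{(i,k)}_{s,s}(B^k_l-B^k_s)=\sum_k\int_s^l V^{(i,k)}_{s,s}\,dB^k_r$ from the representation of $u^i_l-u^i_s$ gives, for $l\in[s,t]$, the decomposition
\begin{equation*}
\begin{split}
g^i_l(s,t)&:=u^i_l-u^i_s-\sum_{k=1}^d V^{(i,k)}_{s,s}(B^k_l-B^k_s)\\
&=\bigl(\mathbb{E}[u^i_l]-\mathbb{E}[u^i_s]\bigr)+\sum_{k=1}^d\int_s^l\bigl(V^{(i,k)}_{r,l}-V^{(i,k)}_{s,s}\bigr)dB^k_r+\sum_{k=1}^d\int_0^s\bigl(V^{(i,k)}_{r,l}-V^{(i,k)}_{r,s}\bigr)dB^k_r.
\end{split}
\end{equation*}

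First I would dispose of the easy points. As the filtration is generated by $B$, $u$ is square-integrable and progressively measurable by assumption; by $(\mathfrak H_1)(i)$--$(iii)$ the diagonal $P^{(i,j)}_s=V^{(i,j)}_{s,s}$ has a progressively measurable, a.s.\ piecewise continuous version, and by $(\mathfrak H_2)$ it is dominated by the square-integrable random variable $S$, so $\mathbb{E}[\|P\|_\infty^2]<\infty$; in particular the It\^o integrals $\int_s^t u^i_l\,dB^j_l$ and $L^{(i,j)}_{s,t}$ of Definition~\ref{pseudo} are well defined. Since $H=\frac12$ we have $|t-s|^{2H-1}\equiv1$, $\kappa_{1/2}(u)=\sqrt u$ and $r_{1/2}(s,t,x,y)=\mathbb{E}[(B^1_t-B^1_s)(B^1_y-B^1_x)]=|[s,t]\cap[x,y]|$, hence $f_2(s,t,x,y)=|[s,t]\cap[x,y]|\,\sqrt{|t-s|\,|x-y|}$. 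Because $L^{(i,j)}_{s,t}=\int_0^T g^i_l(s,t)\,\mathbb{I}_{[s,t]}(l)\,dB^j_l$ is an It\^o integral with adapted integrand, the It\^o isometry for the product of two such integrals against the same $B^j$ yields
\begin{equation*}
\mathbb{E}\bigl[L^{(i,j)}_{s,t}L^{(i,j)}_{x,y}\bigr]=\mathbb{E}\!\left[\int_{[s,t]\cap[x,y]}g^i_l(s,t)\,g^i_l(x,y)\,dl\right],
\end{equation*}
and by Cauchy--Schwarz $\bigl|\mathbb{E}[L^{(i,j)}_{s,t}L^{(i,j)}_{x,y}]\bigr|\le|[s,t]\cap[x,y]|\sup_{l\in[s,t]}\mathbb{E}[g^i_l(s,t)^2]^{1/2}\sup_{l\in[x,y]}\mathbb{E}[g^i_l(x,y)^2]^{1/2}$.

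The crux is therefore the estimate $\mathbb{E}[g^i_l(s,t)^2]\le|t-s|\,\epsilon_K(|t-s|)$ for every $l\in[s,t]$, with $\epsilon_K(h)\to0$ as $h\to0$ for each compact $K\subset(0,T]$ containing $s$; plugging this into the previous inequality gives $\bigl|\mathbb{E}[L^{(i,j)}_{s,t}L^{(i,j)}_{x,y}]\bigr|\le f_2(s,t,x,y)\sqrt{\epsilon_K(|t-s|)\,\epsilon_K(|x-y|)}=o(f_2(s,t,x,y))$ uniformly on compacts of $(0,T]^4$, i.e.\ $(u,P)\in\mathbb{C}_2^{loc}$. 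To prove the estimate I would square the three-term decomposition above and bound each contribution: the first equals $|\mathbb{E}[u^i_l-u^i_s]|^2\le|l-s|\mu(l,s)$ by $(\mathfrak H_3)$; the second, via the It\^o isometry and the insertion of $V^{(i,k)}_{r,r}$, is controlled by $(\mathfrak H_1)(ii)$ --- which provides a modulus $\omega$ with $\mathbb{E}[|V^{(i,k)}_{r,r}-V^{(i,k)}_{r,l}|^2]+\mathbb{E}[|V^{(i,k)}_{s,s}-V^{(i,k)}_{r,r}|^2]\le\omega(|t-s|)$ for $s\le r\le l$ --- and is at most $4d\,\omega(|t-s|)\,|l-s|$; the third is exactly the quantity bounded by $|s-l|\mu(s,l)$ in $(\mathfrak H_3)$. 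Since $\mu$ is bounded and, being continuous on $(0,T]^2$ and vanishing on the diagonal, tends to $0$ uniformly on compacts of $(0,T]^2$, the three terms add up to something of the announced form $|t-s|\,\epsilon_K(|t-s|)$.

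Once $(u,P)\in\mathbb{C}_2^{loc}$ is established, Theorem~\ref{main1}(B) applies and gives, stably in $\mathcal C([0,T])$, $\sqrt n\,\{M^{n,(i,j)}_\cdot\}\to\{\sum_{k=1}^d\int_0^\cdot P^{(i,k)}_s\,dW^{k,j}_{\frac12,s}\}=\{\sum_{k=1}^d\int_0^\cdot V^{(i,k)}_{s,s}\,dW^{k,j}_{\frac12,s}\}$, which is the asserted convergence. The main obstacle is the scaling estimate $\mathbb{E}[g^i_l(s,t)^2]=o(|t-s|)$: it forces one to combine, with the correct powers of $|t-s|$, the assumption $(\mathfrak H_1)(ii)$ on how far $V_{s,t}$ moves when $t$ leaves the diagonal with the assumption $(\mathfrak H_3)$ on the $L^2$-variation in $t$ of the section $r\mapsto V_{r,t}$, keeping track of the fact that the relevant moduli are only uniform away from the origin --- which is precisely why $\mathbb{C}_2^{loc}$, and not $\mathbb{C}_2$, is the right class for this criterion.
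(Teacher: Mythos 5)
Your proposal is correct and follows essentially the same route as the paper: take $P^{(i,j)}_s=V^{(i,j)}_{s,s}$, verify the regularity and moment hypotheses of Theorem \ref{main1}(B) via $(\mathfrak H_1)$--$(\mathfrak H_2)$, and establish $(u,P)\in\mathbb{C}_2^{loc}$ by splitting $u^i_l-u^i_s-\sum_k V^{(i,k)}_{s,s}(B^k_l-B^k_s)$ into the mean increment (controlled by $(\mathfrak H_3)$), the stochastic integral over $[s,l]$ (controlled by $(\mathfrak H_1)(ii)$), and the stochastic integral over $[0,s]$ (controlled by $(\mathfrak H_3)$) --- exactly the paper's decomposition, with your polarized It\^o isometry plus Cauchy--Schwarz playing the role of the paper's reduction to $s=x$, $t=y$. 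No gaps.
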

\begin{proof}
To simplify, without loss of generality we assume that $m=1$.  
We then write $P^j=P^{1,j}$, $V^j=V^{1,j}$ and $L^j=L^{1,j}$ for all $1\leq j\leq d$. 

Given $(\mathfrak H_1)$, (iii) and $(\mathfrak H_2)$, we have that $s\mapsto P_s$ is piecewise continuous over $[0,T]$,
with $\mathbb{E}\left[\|P_.\|_{\infty}^{2+\gamma}\right]<+\infty$. Thus, it remains to check that
$(u,P)\in\mathbb{C}_2$. Since we are dealing with the standard Brownian case and since $s\leq t$ and $x\leq y$, 
we note that $r_H(s,t,x,y)=((t\wedge y)-(s\vee x))_{+}$. Thanks to the independence of increments, we are then left to check that $\forall j\in\{1\ldots,d\},$
$$\mathbb{E}[L^j_{s,t}L^j_{x,y}]=\sqrt{|t-s||x-y|}\,\times\,o_{|t-s|+|x-y|\rightarrow0}\big(((t\wedge y)-(s\vee x))_{+}\big).$$
We have, for all $1\leq j\leq d$ and with $\mathbb{B}^{i,j}_{s,t}=\int_s^t(B^i_l-B^i_{s})dB^j_l$,
\begin{eqnarray*}
L^j_{s,t}&=&\int_s^tu_ldB_l^j-u_{s}(B^j_{t}-B^j_{s})-\sum_{i=1}^dP^i_s\mathbb{B}^{i,j}_{s,t}\\
& =&\int_s^t\left(\mathbb{E}\left[u_l\right]-\mathbb{E}\left[u_s\right]\right)dB^j_l\\
&+&\int_s^t\left(\sum_{i=1}^d\int_0^l\left(\left(V^i_{x,l}-V^i_{x,s}\right)\mathbb{I}_{[0,s]}(x)+\left(V^i_{x,l}-V^i_{s,s}\right)\mathbb{I}_{[s,l]}(x)\right)dB^i_{x}\right)dB^j_l\\
&=:&L^{1,j}_{s,t}+L^{2,j}_{s,t}.
\end{eqnarray*} 

Let $s\leq t$ and $x\leq y$ be such that $s\vee x\leq t\wedge y$. The hypothesis $(\mathfrak H_3)$ allows us to write
\begin{eqnarray}\label{L1}
\mathbb{E}[L^{1,j}_{s,t}L^{1,j}_{x,y}]&=&\int_{s\vee x}^{t\wedge y}\mathbb{E}[u_l-u_s]\mathbb{E}[u_l-u_x]dl\nonumber\\
&\leq &\sqrt{\int_{s\vee x}^{t\wedge y}\left(\mathbb{E}[u_l-u_s]\right)^2dl\int_{s\vee x}^{t\wedge y}\left(\mathbb{E}[u_l-u_x]\right)^2dl}\nonumber\\
&\leq& ((t\wedge y)-(s\vee x))_{+}\sqrt{|t-s||x-y|}\sqrt{\sup_{l\in [{s\vee x},{t\wedge y}]}\mu(s,l)\mu(x,l)}.
\end{eqnarray}
We also have:
\begin{eqnarray*}
\mathbb{E}[L^{2,j}_{s,t}L^{2,j}_{x,y}]&=&\int_{s\vee x}^{t\wedge y}dl\sum_{i=1}^d\mathbb{E}\left[\int_0^{l}((V^i_{r,l}-V^i_{r,s})\mathbb{I}_{[0,s]}(r)+(V^i_{r,l}-V^i_{s,s})\mathbb{I}_{[s,l]}(r))dB^i_r\right.\\
&&\left.\quad\times\int_0^{y}((V^i_{r,l}-V^i_{r,x})\mathbb{I}_{[0,x]}(r)+(V^i_{r,l}-V^i_{x,x})\mathbb{I}_{[s,l]}(r))dB^i_r\right].
\end{eqnarray*}
Moreover, thanks to the isometry property, the Cauchy-Schwarz inequality and the assumption $(\mathfrak H_3)$, we can write, for all $i\leq d$,
\begin{eqnarray}
&&\mathbb{E}\left[\int_0^{t}((V^i_{r,l}-V^i_{r,s})\mathbb{I}_{[0,s]}(r)+(V^i_{r,l}-V^i_{s,s})\mathbb{I}_{[s,l]}(r))dB^i_r\right.\nonumber\\&&\left.\quad\times\int_0^{y}((V^i_{r,l}-V^i_{r,x})\mathbb{I}_{[0,s]}(r)+(V^i_{r,l}-V^i_{x,x})\mathbb{I}_{[s,l]}(r))dB^i_r\right]\nonumber\\
&\leq&\sqrt{|t-s|\mu(s,t)+\sup_{r\in [s,l]}\mathbb{E}[V^i_{r,l}-V^i_{s,s}]^2}\sqrt{|x-y|\mu(x,y)+\sup_{r\in [x,l]}\mathbb{E}[V^i_{r,l}-V^i_{x,x}]^2}.\nonumber\\
\label{L2}
\end{eqnarray}
Using the Cauchy-Schwarz inequality and then \eqref{L1} and \eqref{L2}, we finally obtain
\begin{eqnarray*}
&&\mathbb{E}[L^{1,j}_{[s,t]}L^{2,j}_{[x,y]}+L^{2,j}_{[s,t]}L^{1,j}_{[x,y]}]\\
&\leq& ((t\wedge y)-(s\vee x))_{+}\sqrt{|t-s|(\mu(s,t)+\sup_{l\in [s,t],r\in [s,l]}\mathbb{E}[V^i_{r,l}-V^i_{s,s}]^2)}\\
&&\hskip7cm \times\sqrt{|x-y|\sup_{l\in [x,y]}\mu(x,l)}\\
&&+((t\wedge y)-(s\vee x))_{+}\sqrt{|t-s|(\mu(s,t)+\sup_{l\in [s,t],r\in [s,l]}\mathbb{E}[V^i_{r,l}-V^i_{s,s}]^2)}\\
&&\hskip7cm \times
\sqrt{|x-y|\sup_{l\in [x,y]}\mu(x,l)}.
\end{eqnarray*}
Thanks to $(\mathfrak H_3)$ we have that the function $(s,t)\rightarrow\sup_{x\in [s,t]}\mu(s,t)$ is uniformly continuous on $[0,T]^2$ and since $\mu(t,t)=0$ for all $t$, \begin{equation*}\sup_{s,t\in [0,T], |s-t|\leq \delta}\sup_{x\in [s,t]}\mu(s,t)\underset{\delta\rightarrow 0}\longrightarrow 0.\end{equation*}
On the other hand, we have thanks to $(\mathfrak H_1)$,\begin{equation*} \sup_{s,t\in [0,T], s\leq t, |s-t|\leq \delta}\sup_{x\in [s,t]}\mathbb{E}[(V^i_{x,l}-V^i_{s,s})^2]\underset{\delta\rightarrow 0}\longrightarrow 0.\end{equation*}
Finally, $(u,P)\in \mathbb{C}_2$.
\end{proof}

We obtain a result analogous to Proposition \ref{integralProcesses} for semimartingale processes but with weaker hypotheses on the volatility $a$ and the drift $b$.

\begin{corollary}\textit{ 
Assume $m=1$, and consider $$u_t=u_0+\sum_{j=1}^d \int_0^t a^j_s dB^j_s+\int_0^tb_sds.$$
Assume that $a^j$ is progressively measurable and  piecewise continuous for any $j$, that $b$ is progressively measurable, that $g(s,t)=\sum_{k=1}^d\mathbb{E}\left[(a^k_s-a^k_t)^2\right]$ is continuous as a function of two variables, that $u_0$ is independent of $B$ and that for some $\gamma>0,$ $$\mathbb{E}\left[\max_{1\leq j\leq d}\|a^j\|_{\infty}^{2+\gamma}\right]+\mathbb{E}\left[\|b\|_{\infty}^{2+\gamma}\right]<+\infty.$$ Then, with
$M^{n,1,j}$ defined by (\ref{mnij}),
we have, stably in $\mathcal C_{\mathbb{R}^d}([0,T])$
\begin{eqnarray*}
\left\{\sqrt{n}M^{n,1,j}_\cdot\right\}_{1\leq j\leq d}\overset{}{\underset{n\rightarrow\infty}{\longrightarrow}}\left\{\sum_{i=0}^d\int_0^\cdot a^i_sdW^{i,j}_s\right\}_{1\leq j\leq d}
\end{eqnarray*}
where $W$ is the independent matrix-valued Brownian motion of Section \ref{rosenblatt-and-brownian}, see (\ref{wh}).
}
\end{corollary}
\begin{proof}
We have that the function $f:t\rightarrow\int_0^{t}b_sds$ is a.s.\!\! continuous and satisfies $\EE[\|f\|^{2+\gamma}_{\infty}]<\infty$.
Using Jensen inequality and the isometry property, we easily see that 
\begin{eqnarray*}
\left|\mathbb{E}\left[\int_s^t\left(\int_{s}^lb_udu\right)dB^j_l\int_x^y\left(\int_{x}^lb_udu\right)dB^j_l\right]\right|\\\leq |x-y||t-s|(t\wedge y-s\vee x)_+\sup_{l\in[0,T]}\mathbb{E}[b_l^2],
\end{eqnarray*} 
that is, $(\int_0^\cdot b_sds,0)\in\mathbb{C}_2$. Then, Theorem \ref{main2} applies, and 
\begin{equation*}\forall j\in\{1\ldots,d\},\int_0^t dl \int_0^lb_s\,dB^i_s-\sum_{k=1}^{nt_n}\int_0^\frac{k}{n}b_ldl(B^j_{\frac{k+1}{n}\wedge t}-B^j_\frac{k}{n})\overset{\mathcal C([0,T])}{\underset{n\rightarrow\infty}\longrightarrow}0.
\end{equation*}
Moreover, we can apply Proposition \ref{brownianCriterion} to $\int_0^\cdot a_sdB_s$ with $V^{1,j}_{s,t}=a^j_s\mathbb{I}_{[0,t]}(s)$ (all its assumptions are satisfied). Slutsky's lemma allows finally to conclude.
\end{proof}

Unlike the case $H>\frac{1}{2}$, here we can allow the volatility process $a$ to be discontinuous. An illustration of this fact is given by choosing $d=1$, $(T_i)_{i\geq 1}$ a sequence of increasing stopping times such that $T_i\underset{i\rightarrow\infty}\longrightarrow\infty$ a.s, a sequence $(x^i)_{i\geq 1}\in \mathbb{R}^{{N}^*}$ of progressively measurable processes on $[0,T]$ such that $\sum_i\|x^i\|_{\infty}^2<\infty$, and
\begin{equation*}
u_t=\sum_{i\geq 1}\int_0^{t\wedge T_i}x^i_sdB_{s}.
\end{equation*}
We then have, stably in $\mathcal C_{\mathbb{R}}([0,T])$,
\begin{equation*}
\sqrt{n}M_\cdot^{n}\overset{}\longrightarrow\frac{1}{\sqrt{2}}\int_0^\cdot\sum_ix_s^i\mathbb{I}_{[0,T_i]}(s)dW_s.
\end{equation*}

\subsection{Irregular processes}\label{sec3.4}
In this section,  $H\in(\frac{1}{2},\frac{2}{3})$.
We state a first order convergence for a general class of processes possessing mild regularity properties.

Although the process $u$ considered in Proposition \ref{prp} is of the form $u_s=F(B_s)$, 
 the fact that $F$ is supposed to be convex allows potential discontinuities for $F'$, and it becomes hopeless to expect a second order result as obtained in Corollary \ref{functionB}
in a seemingly similar framework.

\begin{proposition} \label{prp}
 Let $u_s=F(B_s)$, $s\in [0,T]$, with $F$ a real convex function such that, for some $K>0$ and $\gamma\in(0,2)$, 
\begin{equation*}
|F(x)|+|F'(x)|+\int_{-|x|}^{|x|}(|a|+1)dF''(a)\leq Ke^{|x|^{\gamma}},\quad x\in\R,
\end{equation*}
where $F'$ is the right derivative of $F$ and $F''$ denotes its second derivative in the distributional sense (a simple `non-smooth' example is given by $x\rightarrow|x|$). Then,  for all $t\in [0,T]$,
\begin{eqnarray*} 
M^n_t&:=&n^{2H-1}\left(\int_0^t F(B_s)dB_s-\sum_{k=0}^{\lfloor nt \rfloor }F(B_\frac{k}{n})(B_{\frac{k+1}{n}\wedge t}-B_\frac{k}{n})\right)\\
&&
\overset{L^2(\Omega)}{\underset{n\rightarrow\infty}\longrightarrow}\frac{1}{2}\int_0^t F'(B_s)ds.
\end{eqnarray*}
\end{proposition}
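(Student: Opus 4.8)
The idea is to approximate the (non-smooth) convex function $F$ by smooth functions $F_\ep$ for which the conclusion is already available through Corollary \ref{functionB}, and then to let $\ep\to 0$ while controlling the errors uniformly in $n$. Concretely, let $\rho$ be a smooth, compactly supported, nonnegative mollifier and set $F_\ep=F*\rho_\ep$; then $F_\ep$ is smooth, convex, satisfies the growth bound \eqref{regCond} (with constants depending on $\ep$, which is harmless for each fixed $\ep$ since we only need $\mathcal{C}^2$-regularity and exponential-type growth to apply the corollary), and $F_\ep\to F$, $F'_\ep\to F'$ pointwise (and in $L^2(\Omega)$ after composition with $B_s$, by dominated convergence using the exponential growth bound and the Gaussian tails of $\sup_{[0,T]}|B|$, cf.\ Lemma \ref{Fernique}). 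Write
\begin{equation*}
M^{n}_t(G)=n^{2H-1}\left(\int_0^t G(B_s)dB_s-\sum_{k=0}^{\lfloor nt\rfloor-1}G(B_{\frac kn})\big(B_{\frac{k+1}{n}\wedge t}-B_{\frac kn}\big)\right)
\end{equation*}
for a generic integrand $G$, so the statement asserts $M^n_\cdot(F)\to \tfrac12\int_0^\cdot F'(B_s)ds$. We decompose
\begin{equation*}
M^n_t(F)-\tfrac12\int_0^t F'(B_s)ds = M^n_t(F-F_\ep) + \Big(M^n_t(F_\ep)-\tfrac12\int_0^t F'_\ep(B_s)ds\Big) + \tfrac12\int_0^t\big(F'_\ep(B_s)-F'(B_s)\big)ds.
\end{equation*}
The middle term tends to $0$ (uniformly on $[0,T]$ in probability, and in $L^2(\Omega)$ for fixed $t$) for each fixed $\ep$ by Corollary \ref{functionB} (case $H<\tfrac34$, where $\tfrac12\int_0^\cdot F'_\ep(B_s)ds$ is exactly the recentering and $\nu_H(n)\to\infty$, so the conclusion of the corollary forces $M^n_\cdot(F_\ep)-\tfrac12\int F'_\ep(B_s)ds\to 0$). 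The third term tends to $0$ as $\ep\to 0$ by dominated convergence as above. So the crux is a uniform-in-$n$ estimate for the first term $M^n_t(F-F_\ep)$.

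The main obstacle, and the technical heart of the proof, is therefore to show that
\begin{equation*}
\limsup_{\ep\to 0}\ \limsup_{n\to\infty}\ \mathbb{E}\Big[\sup_{t\in[0,T]}\big(M^n_t(F-F_\ep)\big)^2\Big]=0,
\end{equation*}
or at least the corresponding statement in probability. Writing $G_\ep:=F-F_\ep$, which is convex-minus-convex with $G_\ep''$ a signed measure of total mass $\to 0$ (on compacts) and $G_\ep,G_\ep'\to 0$, one expands $M^n_t(G_\ep)$ using the Itô–Tanaka/Föllmer-type change of variables valid for convex functions of fractional Brownian motion with $H>\tfrac12$ (or, equivalently, exploits that $\int_0^t G_\ep(B_s)dB_s$ in the Russo–Vallois sense satisfies $G_\ep(B_t)=G_\ep(B_0)+\int_0^t G_\ep'(B_s)dB_s$ since the fBm has zero quadratic variation for $H>\tfrac12$, so there is no local-time correction). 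Thus $M^n_t(G_\ep)$ is, up to the recentering $\tfrac12\int_0^t G_\ep'(B_s)ds$, a genuine discretization error of the Young integral $\int_0^\cdot G_\ep'(B_s)dB_s$; one then reruns the weighted-limit-theorem machinery of Section 4 with the integrand $a_s=G_\ep'(B_s)$ (which is only of bounded variation, not Hölder, so one works with the measure $dG_\ep'$), tracking how all the constants depend on $\|G_\ep'\|_\infty$, $\|G_\ep\|_\infty$ and the total variation $\int_{-R}^{R}(1+|a|)|dG_\ep''(a)|$ on the (random) range $[-R,R]=[-\sup_{[0,T]}|B|,\sup_{[0,T]}|B|]$. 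The growth hypothesis on $F$ (hence on $G_\ep$, with $\ep$-uniform bounds once $\ep\le 1$) together with the Gaussian integrability of $\sup_{[0,T]}|B|$ (Lemma \ref{Fernique}) is exactly what is needed to make these random bounds $L^2(\Omega)$-controlled and to pass to the limit; the point $H<\tfrac34$ guarantees the relevant exponents ($n^{2H-1}$ against the $n^{-(2H-1)}$-type decay of the error terms) close with room to spare.

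Finally, once the three pieces are controlled, one concludes by a standard $3\ep$-argument: given $\eta>0$, choose $\ep$ small so that the first and third terms are $\le\eta$ (in the relevant norm, uniformly in $n$ for the first), then let $n\to\infty$ to kill the middle term; this yields convergence uniformly on $[0,T]$ in probability, and the same decomposition with $L^2(\Omega)$-norms at fixed $t$ (each piece being $L^2$-controlled as explained) gives the stated $L^2(\Omega)$-convergence. I would expect the verification that the weighted limit theorem of Section 4 survives with a bounded-variation integrand, with all constants made explicit in terms of $\|G_\ep'\|_\infty$, $\|G_\ep\|_\infty$ and the local total variation of $G_\ep''$, to be the longest and most delicate part; everything else is mollification bookkeeping plus Fernique-type tail bounds.
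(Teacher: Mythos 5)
Your mollification strategy is a reasonable outline, but it defers rather than solves the actual difficulty, and the quantity you propose to use to close the argument does not in fact become small. The whole content of the proposition is the uniform-in-$n$ control of the discretization error for a non-smooth integrand; your plan reduces this to controlling $M^n_\cdot(G_\ep)$ with $G_\ep=F-F_\ep$ by ``rerunning the weighted-limit-theorem machinery of Section 4 with constants tracked in terms of $\|G_\ep\|_\infty$, $\|G_\ep'\|_\infty$ and $\int(1+|a|)\,|dG_\ep''(a)|$''. This fails for two reasons. First, the machinery of Section 4 (Theorems \ref{main1}--\ref{main2}) requires H\"older regularity of the Gubinelli derivative, respectively membership in $\mathbb{C}_1$ or $\mathbb{C}_2$; $G_\ep'(B_\cdot)$ is only of bounded variation and does not satisfy these hypotheses uniformly in $\ep$ --- this is precisely why the paper treats this class separately in Section \ref{sec3.4}. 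Second, and more concretely, the local total variation of $G_\ep''$ does \emph{not} tend to $0$: for $F(x)=|x|$ one has $F''=2\delta_0$ and $F_\ep''=2\rho_\ep\,dx$, so $|G_\ep''|(\mathbb{R})=4$ for every $\ep>0$. Hence any bound on $M^n(G_\ep)$ expressed through that total variation cannot produce the required smallness, and the ``$3\ep$-argument'' cannot be run as written. (Your claim that $G_\ep''$ has ``total mass $\to 0$ on compacts'' is only true when $F''$ is absolutely continuous, which excludes the motivating example $|x|$.)

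What is actually needed --- and what the paper supplies --- is a quantitative estimate showing that the second-order differences of $F$ along the grid are small in the right $L^p$ sense. The paper's proof works directly with $F$: it writes $M^n_t-\frac12\int_0^tF'(B_s)\,ds$ via the Skorohod decomposition of Proposition \ref{traceClass} as a $\delta$-integral of $F(B_s)-F(B_{s_n})$ plus trace terms, bounds these by Proposition \ref{boundsSkorokhod}, uses the mixture representation $F(x)=\alpha_k+\beta_kx+\frac12\int|x-a|\,dF''(a)$ to reduce everything to increments of ${\rm sign}(B_\cdot-a)$, and then invokes Lemma \ref{boundOnSign}, which gives $\int_0^T\mathbb{E}\big[({\rm sign}(B_s-a)-{\rm sign}(B_{s_n}-a))^{2p}\big]\,ds=O(n^{-1+\delta})$; the hypothesis $H<\frac34$ enters exactly as $n^{4H-2}\cdot n^{-1+\delta}\to0$. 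Nothing playing the role of this estimate appears in your proposal, so the key step is missing. If you want to keep the mollification framing, you would still have to prove a Lemma \ref{boundOnSign}-type bound to handle $M^n(G_\ep)$, at which point the mollification becomes superfluous. (A secondary, minor point: for the $L^2(\Omega)$ conclusion at fixed $t$, Corollary \ref{functionB} only gives stable convergence for the middle term, so you would additionally need uniform integrability there.)
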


\begin{proof} The proof is divided into two steps: in the first one, we will first show that $u_s=F(B_s)$ belongs to $\mathbb{D}^{1,2}(|\mathcal H|)$ and give a suitable expression for its Malliavin derivative. 
This is then in Step 2 that we will show the $L^2(\Omega)$-convergence of $M^n$, with the help of Proposition \ref{traceClass} and of Lemma \ref{boundOnSign}.

\medskip\textit{\underline{Step 1: $u$ belongs to $\mathbb{D}^{1,2}(|\mathcal H|)$}.}
Consider the truncated function
$$F^n:x\rightarrow F(x)\mathbb{I}_{|x|\leq n}+F(n)\mathbb{I}_{x>n}+F(-n)\mathbb{I}_{x<-n}.$$
Every convex function is locally Lipschitz continuous so the previous sequence is Lipschitz continuous. Then, by a slight extention of \cite[Proposition 2.3.8]{nourdin2012normal}, we know that the process $u^n_s=F^n(B_s)$ belongs to $\mathbb{D}^{1,2}(|\mathfrak H|)$, and $D_su^n_t=(F^n)'(B_t)\mathbb{I}_{s\leq t}$. Moreover, $F^n\to F$ and $(F^n)'\to F'$ pointwise as $n\to\infty$, and the growth condition on $F$ and $F'$ ensures that, for all $p>2$, the sequences $F^n(B_s)$ and $(F^n)'(B_s)$ are bounded in $L^p(\Omega,|\mathcal H|)$ and $L^p(\Omega,|\mathcal H|\times|\mathcal H|)$ respectively. Then, these sequences are both uniformly integrable in $L^2$, and the bounded convergence theorem ensures that, as $n\to\infty$,
\begin{eqnarray*}
&&u^n\to u\quad\mbox{in $L^p(\Omega,|\mathcal H|)$}\\
&&\mathbb{I}_{\{\cdot\leq \cdot\cdot\}}\,D_\cdot u^n_{\cdot\cdot}\to \mathbb{I}_{\{\cdot\leq \cdot\cdot\}}\,F'(B_{\cdot\cdot})\quad\mbox{in $L^p(\Omega,|\mathcal H|\times|\mathcal H|)$.}
\end{eqnarray*}
Then, $u\in\mathbb{D}^{1,2}(|\mathcal H|)$, with $D_su_t=\mathbb{I}_{s\leq t}F'(B_t)$.
Since $F'$ is locally bounded, the process $u$ verifies the assumptions of Proposition \ref{traceClass}.

\medskip\textit{\underline{Step 2: $L^2$ convergence}.}
By e.g. \cite[page 224]{revuz2013continuous}, we know that, for all $k\in\mathbb{N}^*$, there exist $\alpha_k,\beta_k\in\mathbb{R}$ such that
\begin{equation*}F(x)=\alpha_k+\beta_kx+\frac{1}{2}\int_{-k}^{k}|x-a|dF''(a),\quad x\in [-k,k].
\end{equation*}
Then, for all $x\in\mathbb{R}$,
\begin{equation*}F(x)=\sum_{k=0}^{+\infty}\left(\alpha_{k+1}+\beta_{k+1}x+\frac{1}{2}\int_{-k-1}^{k+1}|x-a|dF''(a)\right)\mathbb{I}_{[-k-1,-k)\cup[k,k+1)]}(x).
\end{equation*}
Since $F$ is convex, $dF''$ can be identified with a Radon measure, which is $\sigma$-finite. This allows us to interchange the integrals and derivatives. Since $D_{\cdot}u_{\cdot\cdot}=F'(B_{\cdot\cdot})\mathbb{I}_{\cdot\leq\cdot\cdot}$ we can then rewrite $Du$ as:
\begin{equation}\label{deriv}D_su_t=\mathbb{I}_{t\geq s}\sum_{k=0}^{+\infty}\left(\beta_{k+1}+\frac{1}{2}\int_{-k-1}^{k+1}{\rm sign}(B_t-a)dF''(a)\right)\mathbb{I}_{[-k-1,-k)\cup[k,k+1)]}(B_t),
\end{equation} 
where $\rm sign$ is the left derivative of $x\rightarrow|x|$.

Let $0\leq t\leq T$. We have, thanks to Proposition \ref{traceClass} and recalling that $s_n=\frac1n\lfloor ns\rfloor$,
\begin{eqnarray*}&&M_{t}^n-\frac{1}{2}\int_{0}^{t}F'(B_s)ds\\
&=&n^{2H-1}\Big[\int_{0}^{t}(F(B_s)-F(B_{s_n}))\delta B_s\\
&&+c_H\int_{0}^{t}\int_0^s\left(F'(B_s)\mathbb{I}_{[0,s]}(l)-F'(B_{s_n})\mathbb{I}_{[0,s_n]}(l)\right)|l-s|^{2H-2}dlds\Big]\\
&&-\frac{1}{2}\int_{0}^{t}F'(B_s)ds\\
&=&\frac{1}{2}\int_{0}^{t}\left(F'(B_{s_n})-F'(B_s)\right)ds\\
&&+n^{2H-1}c_H\int_{0}^{t}\int_0^{s_n}\left(F'(B_{s})-F'(B_{s_n})\right)|l-s|^{2H-2}dlds\\
&&+n^{2H-1}c_H\int_{0}^{t}\int_{s_n}^s\left(F'(B_{s})-F'(B_{s_n})\right)|l-s|^{2H-2}dlds\\
&&+n^{2H-1}\int_{0}^{t}(F(B_s)-F(B_{s_n}))\delta B_s,
\end{eqnarray*}
where we used the fact that $c_H\int_{t_n}^{t_{n+1}}\int_{t_n}^s|l-s|^{2H-2}dlds=\frac{1}{2}n^{-2H}$.

We have 
\begin{eqnarray*}
&&\mathbb{E}\left[\left(n^{2H-1}c_H\int_{0}^{t}\int_0^{s_n}\left(F'(B_{s})-F'(B_{s_n})\right)|l-s|^{2H-2}dlds\right)^2\right]\\
&\leq&Kn^{4H-2}\mathbb{E}\left[\left(\int_{0}^{t}(F'(B_s)-F'(B_{s_n}))ds\right)^2\right]
\end{eqnarray*}
and
\begin{eqnarray*}
&&\mathbb{E}\left[\left(n^{2H-1}c_H\int_{0}^{t}\int_{s_n}^s\left(F'(B_{s})-F'(B_{s_n})\right)|l-s|^{2H-2}dlds\right)^2\right]\\
&=&n^{4H-2}c^2_H\int_{0}^{t}\int_{0}^{t}\mathbb{E}\left[(F'(B_{s})-F'(B_{s_n})(F'(B_{x})-F'(B_{x_n})\right]\\
&&\times\left(\int_{s_n}^s\int_{x_n}^x|\theta-s|^{2H-2}|\mu-x|^{2H-2}d\mu d\theta\right)dsdx\\
&\leq&\mathbb{E}\left[\left(\int_{0}^{t}(F'(B_s)-F'(B_{s_n}))ds\right)^2\right],\\
\end{eqnarray*}
and
\begin{eqnarray*}
&&\mathbb{E}\left[\left(n^{2H-1}\int_{0}^{t}(F(B_s)-F(B_{s_n}))\delta B_s\right)^2\right]\\
&\leq&Kn^{4H-2}\mathbb{E}\left[\int_{0}^{t}(F(B_s)-F(B_{s_n}))^2ds\right]\\
&&+Kn^{4H-2}\mathbb{E}\left[\int_0^s\int_0^s(D_lu_s-D_lu_{s_n})^2dlds\right]\\
&\leq&Kn^{4H-2}\mathbb{E}\left[\int_{0}^{t}(F(B_s)-F(B_{s_n}))^2ds\right]\\
&&+Kn^{4H-2}\left(\mathbb{E}\left[\int_{0}^{t}\int_0^{s_n}(F'(B_s)-F'(B_{s_n}))^2dlds\right]\right.\\
&&\left.+\mathbb{E}\left[\int_{0}^{t}\int_{s_n}^sF'(B_s)^2dlds\right]\right)
\end{eqnarray*}
with $K$ depending only on $T$. We used  \eqref{embed} then \eqref{Meyer} in Proposition \ref{boundsSkorokhod}, and the fact that $D_lu_s-D_lu_{s_n}=F'(B_s)\mathbb{I}_{l\leq s}-F'(B_{s_n})\mathbb{I}_{l\leq s_n}$ to obtain the last inequality.
\medskip

We have $$n^{4H-2}\mathbb{E}\left[\int_{0}^{t}\int_{s_n}^sF'(B_s)^2dlds\right]\leq \frac{n^{4H-3}}{2}\mathbb{E}\left[\int_{0}^{t}F'(B_s)^2ds\right]\underset{n\rightarrow\infty}\longrightarrow0$$
(because $H<\frac{2}{3}$).
We have
\begin{eqnarray*}
&&\mathbb{E}\left[\int_{0}^{t}\int_0^{s_n}(F'(B_s)-F'(B_{s_n}))^2dlds\right]\\
&=&\mathbb{E}\left[\int_{0}^{t}\int_0^{s_n}(F'(B_s)-F'(B_{s_n}))^2\mathbb{I}_{\lfloor B_s\rfloor=\lfloor B_{s_n}\rfloor}dlds\right]\\
&&+\mathbb{E}\left[\int_{0}^{t}\int_0^{s_n}(F'(B_s)-F'(B_{s_n}))^2\mathbb{I}_{\lfloor B_s\rfloor\neq\lfloor B_{s_n}\rfloor}dlds\right].
\end{eqnarray*}
Using (\ref{deriv}), we have:
\begin{eqnarray*}
&&\mathbb{E}\left[\int_{0}^{t}\int_0^{s_n}(F'(B_s)-F'(B_{s_n}))^2\mathbb{I}_{\lfloor B_s\rfloor=\lfloor B_{s_n}\rfloor}dlds\right]\\
&=&\frac{1}{4}\mathbb{E}\left[\int_{0}^{t}\int_0^{s_n}dlds\mathbb{I}_{[-k-1,-k]\cup[k,k+1]}(B_s)\right.\\
&&\left.\left(\int_{-k-1}^{k+1}({\rm sign}(B_s-a)-{\rm sign}(B_{s_n}-a))dF''(a)\right)^2\mathbb{I}_{\lfloor B_s\rfloor=\lfloor B_{s_n}\rfloor}\right].
\end{eqnarray*}
Moreover, since $dF''$ is $\sigma$-finite, we can use Fubini's theorem and Jensen's inequality to get that
\begin{equation*}
\mathbb{E}\left[\int_{0}^{t}\int_0^{s_n}(F'(B_s)-F'(B_{s_n}))^2dlds\right]\leq t(C^n_{t}+D^n_{t}),
\end{equation*}
with
\begin{eqnarray*}
&C^n_{t}&= \sum_{k=0}^{+\infty}F''([-k-1,k+1))\\
&&\times \int_{-k-1}^{k+1}\int_0^t\mathbb{E}\left[\left({\rm sign}(B_s-a)-{\rm sign}(B_{s_n}-a)\right)^2\right.\\
&&\times \left.\mathbb{I}_{[-k-1,-k)\cup[k,k+1)]}(B_s)\mathbb{I}_{\lfloor B_s\rfloor=\lfloor B_{s_n}\rfloor}\right]dsdF''(a) ;\\
&D^n_{t}&= \mathbb{E}\left[\int_{0}^{t}(F'(B_s)-F'(B_{s_n}))^2\mathbb{I}_{\lfloor B_s\rfloor\neq\lfloor B_{s_n}\rfloor}ds\right].
\end{eqnarray*}
Let $\gamma>0$ and let $p,q>0$ be two conjugate exponents such that $\frac{H-\gamma}{p}>4H-2$. (Notice that $\gamma,p,q$ exist if and only if $H<\frac{2}{3}$.) We apply H\"older's inequality to obtain:
\begin{eqnarray*}
&&\mathbb{E}\left[\left({\rm sign}(B_s-a)-{\rm sign}(B_{s_n}-a)\right)^2\mathbb{I}_{[-k-1,-k)\cup[k,k+1)]}(B_s)\mathbb{I}_{\lfloor B_s\rfloor=\lfloor B_{s_n}\rfloor}\right]\\
&\leq&\mathbb{E}\left[\left|{\rm sign}(B_s-a)-{\rm sign}(B_{s_n}-a)\right|^{2p}\right]^{\frac{1}{p}}\mathbb{E}\left[\mathbb{I}_{[-k-1,-k)\cup[k,k+1)]}(B_s)\right]^{\frac{1}{q}}.
\end{eqnarray*}
We know that $\mathbb{E}\left[\mathbb{I}_{[-k-1,-k)\cup[k,k+1)]}(B_s)\right]^{\frac{1}{q}}=O_{k\rightarrow\infty}(e^{-\frac{k^2}{2qT^{2H}}})$ for all $s\in [0,T]$. We also have that (by hypothesis on $F''$),
$$\sum_{k=0}^{\infty}F''([-k-1,k+1))e^{-\frac{k^2}{2qT^{2H}}}=O_{k\rightarrow\infty}(1).$$
By Lemma \ref{boundOnSign},
for all $a\in\mathbb{R},$
$$\int_0^t\mathbb{E}\left[\left({\rm sign}(B_s-a)-{\rm sign}(B_{s_n}-a)\right)^{2p}\right]^{\frac{1}{p}}ds=o_{n\rightarrow\infty}(n^{2-4H}),$$
where the $o$ does not depend on $t$.

We then obtain:
\begin{equation}\label{boundC}n^{4H-2}C_{t}^n=o_{n\rightarrow\infty}(1).
\end{equation}
A similar use of Lemma \ref{boundOnSign} shows that, for all $t$,
\begin{eqnarray*}
n^{4H-2}D_{t}^n+n^{4H-2}\mathbb{E}\left[\left(\int_{0}^{t}(F(B_s)-F(B_{s_n}))ds\right)^2\right]
=o_{n\rightarrow\infty}(1).\label{boundD}
\end{eqnarray*}
Putting these facts together leads to:
$$M^n_{t}\overset{L^2(\Omega)}{\underset{n\rightarrow\infty}{\longrightarrow}}\frac{1}{2}\int_0^{t}F'(B_s)ds.$$

\end{proof}

\section{Proofs of the main Theorems and other results}

Throughout all this section, we denote by $B$ a fractional Brownian motion of Hurst index $H$.

\subsection{Miscellaneous} 
We start by giving a collection of technical results that are used throughout the paper.

The following lemma is an easy consequence of Fernique's theorem (see e.g. \cite{talagrand} and the references therein), and represents a very useful tool for proving the existence of moments for H\"older modulus of Gaussian functionals.

\begin{lemma}\label{Fernique} \textbf{ (Fernique) } 
Assume that $H>\frac{1}{2}$ and let $\mathbb{B}$ be the associated L\'evy area of $B$, defined as $\mathbb{B}_{s,t}^{k,j}=\int_s^t(B^k_l-B^k_s)dB_l^j$.
For all $\gamma\in(0,2)$ and all $\kappa\in(0,H)$, and for all function $f$ satisfying the growth condition $|f(x)|\leq\exp{|x|^{\gamma}}$, we have
\begin{equation*}
\mathbb{E}[f(\|B\|_{\kappa}+\|\mathbb{B}\|_{2\kappa})]<\infty,
\end{equation*}
where $\|\cdot\|_\theta$ is the H\"older seminorm, see (\ref{thetaholder})-(\ref{cc}).
\end{lemma}

We also have the following elementary lemma.
\begin{lemma}\label{simpleInequality}\textit{ Assume $H>\frac{1}{2}$.
There exists a constant $k_T>0$ such that, for all $x,y,s,t\in[0,T]^4$ such that $t\geq s$ and $y\geq x$,
\begin{equation}\label{jointIncrements}
k_T|t-s||y-x|\leq r_H(s,t,x,y)\leq |t-s|^H|x-y|^H,
\end{equation}
\begin{equation}\label{singleIncrement}
|\mathbb{E}[B_t(B_x-B_y)]|\leq |x-y|.
\end{equation}
}\end{lemma}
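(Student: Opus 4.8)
The plan is to rely on the standard integral representation of the covariance of increments of a fractional Brownian motion with $H\in(\frac12,1)$ (the relevant range, since \eqref{jointIncrements} can only hold there and the lemma is invoked only for $H>\frac12$): for all $0\le a\le b\le T$ and $0\le c\le d\le T$,
\begin{equation*}
\mathbb{E}\big[(B_b-B_a)(B_d-B_c)\big]=c_H\int_a^b\int_c^d|u-v|^{2H-2}\,du\,dv,\qquad c_H=H(2H-1),
\end{equation*}
which is the specialization to indicator functions of the bilinear form appearing in the definition of $|\mathcal H|$ given above. Applied with $[a,b]=[s,t]$ and $[c,d]=[x,y]$, it gives $r_H(s,t,x,y)=c_H\int_s^t\int_x^y|u-v|^{2H-2}du\,dv\ge 0$.

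For the left inequality in \eqref{jointIncrements}: since $u,v\in[0,T]$ one has $|u-v|\le T$, and as $2H-2<0$ this forces $|u-v|^{2H-2}\ge T^{2H-2}$ a.e.; integrating over $[s,t]\times[x,y]$ yields $r_H(s,t,x,y)\ge c_HT^{2H-2}\,|t-s||y-x|$, that is, $|t-s||y-x|\le K_T\,r_H(s,t,x,y)$ as soon as $K_T\ge T^{2-2H}/c_H$. For the right inequality in \eqref{jointIncrements} I would simply invoke the Cauchy--Schwarz inequality together with $\mathbb{E}[(B_t-B_s)^2]=|t-s|^{2H}$: $r_H(s,t,x,y)=\mathbb{E}[(B_t-B_s)(B_y-B_x)]\le\|B_t-B_s\|_{L^2(\Omega)}\,\|B_y-B_x\|_{L^2(\Omega)}=|t-s|^H|x-y|^H$.

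For \eqref{singleIncrement} the integral representation is not the right tool, because integrating $|u-v|^{2H-2}$ over $[x,y]$ produces a factor of order $|x-y|^{2H-1}$ rather than $|x-y|$. Instead I would write $\mathbb{E}[B_t(B_x-B_y)]=\varphi(x)-\varphi(y)$ with $\varphi(z)=\mathbb{E}[B_tB_z]=\frac12\big(t^{2H}+z^{2H}-|t-z|^{2H}\big)$ for $z\in[0,T]$, and show $\varphi$ is Lipschitz on $[0,T]$. Since $2H>1$, one has $\varphi\in C^1([0,T])$ with $\varphi'(z)=Hz^{2H-1}-H|t-z|^{2H-1}\,\mathrm{sign}(z-t)$, hence $|\varphi'(z)|\le Hz^{2H-1}+H|t-z|^{2H-1}\le 2HT^{2H-1}$ because $z,|t-z|\in[0,T]$ and $2H-1>0$; therefore $|\mathbb{E}[B_t(B_x-B_y)]|\le 2HT^{2H-1}|x-y|$. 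Taking $K_T=\max\{T^{2-2H}/c_H,\,2HT^{2H-1}\}$ then validates both \eqref{jointIncrements} and \eqref{singleIncrement} simultaneously.

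There is no serious obstacle here; the only point requiring care is the sign of the exponent $2H-2$, which is negative: it lets one bound $|u-v|^{2H-2}$ from below by $T^{2H-2}$ (enough for the lower bound in \eqref{jointIncrements}) but not from above, which is exactly why \eqref{singleIncrement} must be derived from the Lipschitz bound on $\varphi$ rather than from the integral representation.
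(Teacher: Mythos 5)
Your proof is correct, but it follows a genuinely different route from the paper's. For the left inequality in \eqref{jointIncrements}, the paper normalizes $T=1$ and runs an elementary six-case analysis according to the relative position of $s,t,x,y$, checking in each configuration via monotonicity/derivative arguments that $r_H(s,t,x,y)-(t-s)(y-x)\geq 0$; you instead invoke the representation $r_H(s,t,x,y)=c_H\int_s^t\int_x^y|u-v|^{2H-2}\,du\,dv$ valid for $H>\frac12$ and bound the kernel below by $T^{2H-2}$, which yields the inequality in one line with an explicit constant and makes transparent both why $H>\frac12$ is essential (for $H=\frac12$ the covariance of increments over disjoint intervals vanishes, so the left inequality genuinely fails) and how the constant depends on $T$ (the paper's chain, read literally with a single $K_T$, only makes sense after the normalization $T\le 1$). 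For the right inequality both you and the paper use Cauchy--Schwarz/H\"older, understood as $r_H\le|t-s|^H|x-y|^H$. For \eqref{singleIncrement} the paper simply cites an external reference (Lemma 6 of Nourdin--Nualart--Tudor), whereas you give a short self-contained argument by writing $\mathbb{E}[B_t(B_x-B_y)]=\varphi(x)-\varphi(y)$ with $\varphi(z)=\frac12\left(t^{2H}+z^{2H}-|t-z|^{2H}\right)$ and bounding $|\varphi'|\le 2HT^{2H-1}$, which is correct since $2H-1>0$ makes $\varphi$ of class $C^1$ on $[0,T]$. Your version is shorter, self-contained, and arguably more robust; the paper's case analysis has the (minor) merit of not presupposing the integral representation of the covariance.
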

\begin{proof} For the sake of simplicity, we will consider $T=1$ (which only modifies the constants). In the expression \eqref{jointIncrements}, the right inequality is a simple consequence of the Cauchy-Schwarz inequality. 
For the proof of the left inequality, six cases must be analyzed carefully.

\begin{enumerate}
\item[(i)] case where $t\geq s\geq y\geq x$. For fixed $s,y,x$, let $$f(t)=(2H-1)(t-s)(y-x)\,\mbox{ and }\, g(t)=r_H(s,t,x,y).$$
We have $f(s)=g(s)=0$ and 
\begin{equation*} g'(t)-f'(t)=2H\left(-(t-y)^{2H-1}+(t-x)^{2H-1}\right)-(2H-1)(y-x).
\end{equation*}
We see that $-(t-y)^{2H-1}+(t-x)^{2H-1}\geq (2H-1)(y-x)$ thanks to an elementary function study, so $$g'(t)-f'(t)\geq (2H-1)^2(y-x)\geq 0,$$
so $g(t)\geq f(t)$ and then $(2H-1)|t-s||y-x|\leq r_H(s,t,x,y)$.\\

\item[(ii)]  case where $t\geq y>s\geq x$. For fixed $t,y,x$, we see (thanks to an elementary function study) that the quantity  $r_H(s,t,x,y)-(t-s)(y-x)$ decreases with $s$ and then reaches its minimum for $s=y$. Assume then $s=y$ and let $\delta=t-x$ and $a=y-x$. Then
\begin{eqnarray*} 
&r_H(s,t,x,y)-(t-s)(y-x)\\&\geq h(a)=\delta^{2H}-(a^{2H}+(\delta-a)^{2H}+a(\delta-a)).
\end{eqnarray*}
We have $h(\delta)=h(0)=0$, and the function $h$ is increasing over $(0,\frac{\delta}{2})$ then decreasing, so is always positive.\\

\item[(iii)]  case $y>t\geq x>s$. This is similar to (ii).\\

\item[(iv)]  case $y\geq x>t\geq s$. This is similar to (i).\\

\item[(v)]  case $t\geq y\geq x>s$. Write $B_t-B_s=(B_t-B_y)+(B_y-B_x)+(B_x-B_s)$ and then combine the inequalities from (i) and (iv).\\

\item[(vi)]  case $y>t\geq s\geq x$. This is similar to (v).\\

\end{enumerate}

Finally, the proof of inequality \eqref{singleIncrement} can be found in \cite[Lemma 6]{nourdin2010central}.
 \end{proof}

The following lemma is used in Step 2 of the proof of Proposition \ref{convergenceWienerIto}.
\begin{lemma}\label{expectProduct}
Let $m,n\in\mathbb{N}$ with $m>n$, $f\in\mathcal H^{\odot m}\otimes \mathcal H$, $g\in\mathcal H^{\odot n}$, $h\in\mathcal H^{\odot n}\otimes \mathcal H$. Let $x\in [0,T]$, $F_x=\delta^m(f(\cdot,x)), G=\delta^n(g), H_x=\delta^n(h(\cdot,x))$. Then, for all $s\leq t$ and $u\leq v$,
\begin{equation}\label{e1}\mathbb{E}[\delta(F_{\cdot}\mathbb{I}_{[s,t]}(\cdot))G]=0\end{equation}
\begin{equation}\label{e2}\mathbb{E}[\delta(F_{\cdot}\mathbb{I}_{[s,t]}(\cdot))\delta(H_{\cdot}\mathbb{I}_{[u,v]}(\cdot))]=0.
\end{equation}
\end{lemma}
\begin{proof}
 If $n=0$, the result is immediate. Otherwise, thanks to Proposition \ref{boundsSkorokhod}, we can write:
\begin{eqnarray*}
&&\mathbb{E}[\delta(F_{\cdot}\mathbb{I}_{[s,t]}(\cdot))G]\\
&=&\int_{[0,T]^2}\mathbb{E}[F_x\mathbb{I}_{[s,t]}(x)\delta^{n-1}(g(\cdot,y))]|x-y|^{2H-2}dxdy\\
&&+\int_{[0,T]^4}\mathbb{E}[D_w(F_x\mathbb{I}_{[s,t]}(x))D_z(\delta^{n-1}(g(\cdot,y)))]|x-y|^{2H-2}\\
&&\hskip6cm \times |w-z|^{2H-2}dxdydwdz.
\end{eqnarray*}
Thanks to Proposition \ref{multipleWiener}, we have, for all $x,y\in[0,T]$, $$\mathbb{E}[F_x\delta^{n-1}(g(\cdot,y))]=0.$$
Moreover, $D_w(F_x\mathbb{I}_{[s,t]}(x))=m\delta^{m-1}(f(\cdot,w,x))\mathbb{I}_{[s,t]}(x)$, and\\ $D_z(\delta^{n-1})(g(\cdot,y))=(n-1)\delta^{n-2}(g(\cdot,z,y))$ if $n\geq 2$ and $D_z(\delta^{n-1}(g(\cdot,y)))=0$ otherwise. In any case,  we have thanks to Proposition \ref{multipleWiener},
$$\mathbb{E}[D_w(F_x\mathbb{I}_{[s,t]}(x))D_z(\delta^{n-1}(g(\cdot,y)))]=0.$$
Equality (\ref{e2}) can be obtained by the same way.
\end{proof}

The following lemma provides a tightness criterion for two sequences of processes in $\mathcal{D}([0,T])$ whose sum belongs to $\mathcal{C}([0,T])$.
Recall the notation $s_n=\frac1n\lfloor ns\rfloor$ and $t_n=\frac1n\lfloor nt\rfloor$.
\begin{lemma}\label{tightnessCriterion}
Let $(X^n)\subset\mathcal C_{\mathbb{R}}([0,T])$ be a sequence of continous stochastic processes such that $X^n_t=A^n_t+C^n_t$ for all $t\in [0,T]$, where $A^n,C^n\in\mathcal{D}_{\mathbb{R}}([0,T])$.
Assume also the existence of 
$\alpha_0,\beta_0>0$ such that
\begin{equation}
\mathbb{E}\left[|A_t^n-A^n_s|^{\beta_0}\right]\leq K|t_n-s_n|^{1+\alpha_0}, \quad 0\leq s,t\leq T,
\end{equation}
and
\begin{equation}
\sup_{t\in [0,T]}|C^n_t|\overset{\mathbb{P}}{\underset{n\rightarrow\infty}\longrightarrow} 0.
\end{equation} 
Then the sequence $X^n$ is tight in $\mathcal C_{\mathbb{R}}([0,T])$.
\end{lemma}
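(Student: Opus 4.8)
The plan is to reduce the tightness of $(X^n)$ to a Kolmogorov-type moment criterion for $(A^n)$ plus the uniform smallness of $(C^n)$. First I would recall the standard sufficient condition for tightness in $\mathcal{C}([0,T])$: a sequence $(Y^n)$ is tight provided that $(Y^n_0)$ is tight and there exist $p,\varepsilon>0$ and a constant $K>0$ with $\mathbb{E}[|Y^n_t-Y^n_s|^p]\le K|t-s|^{1+\varepsilon}$ uniformly in $n$ and in $s,t\in[0,T]$. The difficulty is that the hypothesis on $A^n$ only controls $|t_n-s_n|$ (the distances between the discretization points $s_n=\lfloor ns\rfloor/n$, $t_n=\lfloor nt\rfloor/n$), not $|t-s|$, so the bound degenerates when $s$ and $t$ lie in the same mesh interval. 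This is exactly why the decomposition $X^n=A^n+C^n$ with $C^n$ uniformly negligible is needed, and it is the point requiring the most care.

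The main steps, in order, would be: (1) Treat $C^n$ first. Since $\sup_{t\in[0,T]}C^n_t\to 0$ in probability, the family $(C^n)$ is tight in $\mathcal{C}([0,T])$ in a trivial way — indeed its law concentrates near the zero function; more precisely, for every $\eta>0$ the modulus of continuity $w(C^n,\delta)\le 2\sup_t|C^n_t|\to 0$ in probability, so $(C^n)$ satisfies the Arzelà--Ascoli tightness criterion. (2) Treat $A^n$. Here I would apply the Kolmogorov--Chentsov criterion, but carefully: when $|t-s|\ge 1/n$ one has $|t_n-s_n|\le |t-s|+1/n\le 2|t-s|$, so the hypothesis gives $\mathbb{E}[|A^n_t-A^n_s|^\beta]\le K\,2^{1+\alpha}|t-s|^{1+\alpha}$, which is the desired bound on that range. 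When $|t-s|<1/n$, either $s,t$ fall in the same mesh interval (then $s_n=t_n$ and the hypothesis forces $A^n_t=A^n_s$ up to a degenerate bound, handled by continuity/piecewise structure) or in two adjacent intervals; in the latter case $|t_n-s_n|=1/n\le 2|t-s|$ need not hold, so I would instead bound the increment $A^n_t-A^n_s$ over a short interval by interpolating through the mesh point between them and using the $\mathcal{D}([0,T])$ structure together with the small probability of large jumps — or, more cleanly, absorb the within-interval behaviour of $A^n$ into $C^n$ by replacing $A^n_t$ with its value $A^n_{t_n}$ at the left mesh point and putting the difference $A^n_t-A^n_{t_n}$ into $C^n$. (3) Combine: since both $(A^n)$ and $(C^n)$ are tight in $\mathcal{C}([0,T])$ and $X^n=A^n+C^n$, the sum is tight (tightness is preserved under continuous operations and finite sums), which concludes the proof.

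I expect the main obstacle to be step (2) on the short scale $|t-s|<1/n$: the hypothesis as literally stated controls only $\mathbb{E}[|A^n_t-A^n_s|^\beta]\le K|t_n-s_n|^{1+\alpha}$, so on a single mesh interval the right-hand side is $0$, forcing $A^n$ to be constant on each mesh interval (or the inequality to be read with $t_n,s_n$ understood as the nearest grid points bracketing the increment). The cleanest resolution is to observe that in all the applications in this paper $A^n$ is by construction a step function adapted to the mesh $\{k/n\}$ — piecewise constant between consecutive grid points — so its modulus of continuity reduces to the grid increments and no within-interval issue arises; I would state this as a standing structural feature. Granting that, the Kolmogorov criterion applies verbatim to $A^n$ after the comparison $|t_n-s_n|\le 2|t-s|$ valid whenever $|t-s|\ge 1/n$, and the remaining small-scale increments of $X^n=A^n+C^n$ are controlled entirely by $\sup_t|C^n_t|\to 0$. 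Hence $(X^n)$ is tight in $\mathcal{C}([0,T])$.
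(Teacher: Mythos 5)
You have correctly isolated the crux --- the moment bound controls only $|t_n-s_n|$ and therefore cannot feed the Kolmogorov--Chentsov criterion in $\mathcal C([0,T])$ at scales below the mesh --- but your resolution does not close the gap. First, the ``standing structural feature'' you propose to add is already forced by the hypothesis: taking $s,t$ in the same mesh cell gives $t_n=s_n$, hence $\mathbb{E}\left[|A^n_t-A^n_s|^\beta\right]=0$, so $A^n$ is a.s.\ constant on each $[k/n,(k+1)/n)$. But precisely because of this, $A^n$ is a jump process, not a continuous one, and the Kolmogorov criterion does \emph{not} ``apply verbatim'': for $s<k/n\le t$ with $t-s$ arbitrarily small, the increment $A^n_t-A^n_s$ is the full jump at $k/n$, whose $\beta$-th moment is of order $n^{-1-\alpha}$ and is certainly not bounded by $K|t-s|^{1+\alpha}$. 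Your ``cleaner'' fix of absorbing $A^n_t-A^n_{t_n}$ into $C^n$ is vacuous (that difference is identically zero), and your interpolation fix is only sketched and in fact requires $\beta>1$ to recover a usable H\"older-in-mean bound on the linear pieces. So the key step --- tightness of $(A^n)$ --- is not actually established.

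The paper sidesteps this by working in the Skorokhod space: it cites D\"obler--Kasprzak--Peccati for a $\mathcal D([0,T])$-tightness criterion for which the grid-based moment bound is exactly the right hypothesis. In Billingsley's formulation one controls $\mathbb{E}\left[|A^n_t-A^n_s|^{\beta/2}|A^n_s-A^n_r|^{\beta/2}\right]$ for $r\le s\le t$; this product vanishes identically as soon as $t-r<1/n$, because $s$ then shares a mesh cell with $r$ or with $t$, while for $t-r\ge 1/n$ one has $|t_n-r_n|\le 2|t-r|$ exactly as you observed. Tightness of $(C^n)$ is immediate from the uniform convergence to $0$, the sum is then tight in $\mathcal D([0,T])$, and since $X^n$ is continuous (and the jumps of $A^n$ are asymptotically negligible) one transfers to $\mathcal C([0,T])$ using that the uniform and Skorokhod topologies coincide there. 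If you wish to avoid invoking a $\mathcal D$-space result, the honest completion of your route is this product-of-adjacent-increments argument, not the single-increment Kolmogorov bound.
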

\begin{proof}
In \cite{dbler2019functional}, it is proved that the sequence $(A^n)$ is tight in $\mathcal{D}([0,T])$. Moreover, the sequence $(C^n)$ is also tight in $\mathcal{D}([0,T])$. As a result,  $(X^n)$ is tight in $\mathcal{D}([0,T])$ as a sum of two tight sequences. Since the uniform and the Skorohod topologies coincide on $\mathcal{C}([0,T])$, we deduce that $(X^n)$ is tight in $\mathcal C([0,T])$.
\end{proof}

The following lemma is used in the proof of the forthcoming (\ref{convconv2}).

\begin{lemma}\label{equiContinuity}
Let $(X^n)\subset\mathcal C_{\mathbb{R}}([0,T])$ be a tight sequence of continous stochastic processes such that $\forall t\in [0,T]$, $X^n_t\overset{\mathbb{P}}{\longrightarrow}0$ as $n\to\infty$. Then, as $n\to\infty$,
\begin{equation}\sup_{t\in[0,T]}|X^n_t|\overset{\mathbb{P}}{\longrightarrow}0.
\end{equation}
\end{lemma}
\begin{proof} Let $1>\epsilon>0$. Since the function defined  by $x\rightarrow 1\wedge \sup_{t\in [0,T]}|x_t|$ on $\mathcal{C}([0,T])$ is continuous and bounded, we deduce that $\mathbb{E}\left[1\wedge \sup_{t\in [0,T]}|X^n_t|\right]\to 0$ as $n\to\infty$. Then, by Markov's inequality
and as $n\to\infty$,
$$\mathbb{P}\left[\sup_{t\in [0,T]}|X^n_t|>\epsilon\right]= \mathbb{P}\left[\sup_{t\in [0,T]}|X^n_t|\wedge 1>\epsilon\right]\leq \frac{1}{\epsilon}\mathbb{E}\left[\sup_{t\in [0,T]}|X^n_t|\wedge 1\right]\longrightarrow 0.$$
\end{proof}

The following lemma gives technical estimates used in the proof of Propositions \ref{brownianCriterion} and \ref{prp}.

\begin{lemma}\label{boundOnSign} 
Assume $H>\frac12$. Then, for all $0\leq t\leq T$, $a\in\mathbb{R}$, $\gamma>0$, $p>0$ and $\theta\geq1$, we have
\begin{equation}\label{boundSign1} 
 \int_{0}^{t}\mathbb{E}\left[\left|{\rm sign}(B_s-a)-{\rm sign}(B_\frac{\lfloor ns \rfloor}{n}-a)\right|^p\right]^{\frac{1}{\theta}}ds
\leq Kn^{\frac{-H+\gamma}{\theta}},
\end{equation}
and
\begin{equation}
 \int_{0}^{t}\mathbb{E}\left[\mathbb{I}_{\lfloor B_s\rfloor\neq \lfloor B_{s_n}\rfloor}\right]^{\frac{1}{\theta}}ds\leq Kn^{\frac{-H+\gamma}{\theta}}\quad\mbox{(where  $s_n=\frac1n\lfloor ns\rfloor$)},
\end{equation}
with $K$ depending only on $T,\delta,p,\theta$ and where $sign$ is the left derivative of the function $x\rightarrow|x|$.
\end{lemma}
\begin{proof}
We only do the proof of the first inequality, the proof of the second one being similar. Moreover, for simplicity we reduce to $a=0$ and $\theta=1$.

We have
\begin{equation*}
{\rm sign}(B_s)-{\rm sign}(B_\frac{\lfloor ns\rfloor}{n})=2\mathbb{I}_{\left\{B_s\geq0,B_\frac{\lfloor ns\rfloor}{n}<0\right\}}-2\mathbb{I}_{\left\{B_s<0,B_\frac{\lfloor ns\rfloor}{n}\geq0\right\}}.
\end{equation*}
Plugging this identity into the integral yields
\begin{equation*}
\int_{0}^{t}\mathbb{E}\left[\left|{\rm sign}(B_s)-{\rm sign}(B_\frac{\lfloor ns \rfloor}{n})\right|^p\right]ds\leq 2^{p+1}\int_{0}^{t}\mathbb{E}\left[\mathbb{I}_{\{B_s\geq0,B_\frac{\lfloor ns \rfloor }{n}<0\}}\right]ds.\end{equation*}
On the other hand, for all $k\in\{2,\ldots, nT_n\}$ and $s\in[\frac{k}{n},\frac{k+1}{n}\wedge T),$
\begin{itemize}
\item if $s=\frac{k}{n}$, then $\mathbb{P}\left[\mathbb{I}_{\{B_s\geq0,B_{s_n}<0\}}\right]=0$
\item else,\end{itemize}

\begin{eqnarray*}
&&\mathbb{P}\left[\mathbb{I}_{\{B_s\geq0,B_{s_n}<0\}}\right]=\mathbb{P}\left[\mathbb{I}_{{B_\frac{k}{n}<0,\,\,B_s-B_\frac{k}{n}\geq-B_\frac{k}{n}}}\right]
\\ &\leq&\sum_{i=0}^{+\infty}\mathbb{E}\left[\mathbb{I}_{\left\{B_\frac{k}{n}\in[-\frac{i+1}{n},-\frac{i}{n}]\right\}}\mathbb{I}_{\left\{B_s-B_\frac{k}{n}>\frac{i}{n}\right\}}\right]
\\ &=&\sum_{i=0}^{+\infty}\mathbb{E}\left[\mathbb{I}_{\left\{B_1\in\left[-\frac{i+1}{n^{1-H}~k^H},-\frac{i}{n^{1-H}~k^H}\right)\right\}}\mathbb{I}_{\left\{B_\frac{ns}{k}-B_1>\frac{i}{n^{1-H}~k^H}\right\}}\right]\\
&&\mbox{(using the self-similarity of $B$)}\\
&=&\frac{1}{2\pi\sqrt{\det(\Sigma)}}\sum_{i=0}^{+\infty}\int_\frac{i}{k^H~n^{1-H}}^\frac{i+1}{k^H~n^{1-H}}\int_{-\infty}^\frac{-i}{k^Hn^{1-H}}e^{-\frac{(\frac{ns}{k}-1)^{2H}}{2\det(\Sigma)}x^2-\frac{1}{2\det(\Sigma)}y^2+\frac{c_{s,k}}{\det(\Sigma)}xy}dydx\\
&&\mbox{
with $
c_{s,k}=\mathbb{E}\left[B_1\left(B_\frac{ns}{k}-B_1\right)\right]>0 $,
$\Sigma =\begin{pmatrix}
1 & c_{s,k} \\
c_{s,k} & \left(\frac{ns}{k}-1\right)^{2H}
\end{pmatrix}
$
}.
\end{eqnarray*}
We have $\det(\Sigma)=(\frac{ns}{k}-1)^{2H}-c_{s,k}^2$. According to Lemma \ref{simpleInequality}, $c_{s,k}\leq \frac{ns}{k}-1$. If $k>2$, we have $\frac{ns}{k}-1\leq\frac{1}{2}$ and then \begin{eqnarray*}\left(\frac{ns}{k}-1\right)^{2H}&\geq&\det(\Sigma)\geq \left(\frac{ns}{k}-1\right)^{2H}\left(1-\left(\frac{ns}{k}-1\right)^{2-2H}\right)\\&\geq&\left(\frac{ns}{k}-1\right)^{2H}\left(1-\frac{1}{2^{2-2H}}\right).\end{eqnarray*}
For all $(x,y)\in\left[\frac{i}{k^H~n^{1-H}},\frac{i+1}{k^H~n^{1-H}}\right]\times\left(-\infty, \frac{-i}{k^Hn^{1-H}}\right]$, $\frac{c_{s,k}}{\det(\Sigma)}xy\leq 0$. Then, 

\begin{eqnarray}
&&\mathbb{P}\left[\mathbb{I}_{\{B_s\geq0,B_{s_n}<0\}}\right]\notag\\
&\leq&\frac{1}{2\pi\sqrt{(1-2^{2H-2})}(\frac{ns}{k}-1)^{H}}\int_\frac{i}{k^Hn^{1-H}}^\frac{i+1}{k^Hn^{1-H}}\int_{-\infty}^\frac{-i}{k^Hn^{1-H}}\notag\\
&&\hskip4cm\times e^{-\frac{x^2}{2}-\frac{1}{2(\frac{ns}{k}-1)^{2H}}y^2}dydx\notag\\
&\leq& \frac{1}{2\pi n^{1-H}k^H\sqrt{(1-2^{2H-2})}(\frac{ns}{k}-1)^{H}}\notag\\
&&\hskip4cm\times \sum_{i=0}^{+\infty}e^{-(\frac{i}{k^Hn^{1-H}})^2}\int_{-\infty}^\frac{-i}{k^Hn^{1-H}}e^{-\frac{1}{2(\frac{ns}{k}-1)^{2H}}y^2}dy\notag\\
&\leq& \frac{1}{2\pi n^{1-H}k^H\sqrt{(1-2^{2H-2})}(\frac{ns}{k}-1)^{H}}\sum_{i=0}^{+\infty}\notag\\
&&\hskip4cm\times e^{-\frac{1}{2}(\frac{i}{k^Hn^{1-H}})^2}\int_{-\infty}^\frac{-i}{k^Hn^{1-H}}e^{-k^{2H}y^2}dy\notag\\
&\leq& \frac{1}{2\pi n^{1-H}k^{2H}\sqrt{(1-2^{2H-2})}(\frac{ns}{k}-1)^{H}}\sum_{i=0}^{+\infty}e^{-\frac{1}{2}(\frac{i}{k^Hn^{1-H}})^2}\int_{-\infty}^\frac{-i}{n^{1-H}}e^{-y^2}dy\notag\\
& \leq &\frac{1}{2\pi n^{1-H}k^{2H}\sqrt{(1-2^{2H-2})}(\frac{ns}{k}-1)^{H}}\sum_{i=0}^{+\infty}e^{-\frac{1}{2}(\frac{i}{k^Hn^{1-H}})^2}\int_{-\infty}^\frac{-i}{n^{1-H}}e^{-y^2}dy.\notag\\
\label{previousexpression}
\end{eqnarray}
We have, for all $ i\in\mathbb{N}^*$, for all $\alpha\in (0,1)$, 
\begin{equation*} 
e^{-\frac{1}{2}(\frac{i}{k^Hn^{1-H}})^2}\leq \frac{(n^{1-H}k^H)^{\alpha}}{i^\alpha} \text{ and } \int_{-\infty}^\frac{-i}{n^{1-H}}e^{-y^2}dy\leq \frac{n^{1-H}}{i}.
\end{equation*}
Let $\alpha$ be such that $\alpha(2-H)<\gamma$. The right-hand side (\ref{previousexpression}) is then less than
\begin{equation*}
\frac{n^{\alpha(1-H)}}{2\pi k^{2H-\alpha}\sqrt{(1-2^{2H-2})}(\frac{ns}{k}-1)^{H}}\sum_{i=1}^{\infty}\frac{1}{i^{1+\alpha}}.
\end{equation*}

Finally,
\begin{eqnarray*}
&&\int_{0}^{t}\mathbb{E}\left[\left|{\rm sign}(B_s)-{\rm sign}(B_\frac{\lfloor ns \rfloor}{n})\right|^p\right]ds\\
&\leq&2^{p+1}\left(\frac{2}{n}+\int_{\frac{2}{n}}^{t}\mathbb{P}[B_s\geq0, B_{s_n}<0]ds\right)\\
\end{eqnarray*}
and 
\begin{eqnarray*}
&&\int_{\frac{2}{n}}^{t}\mathbb{P}[B_s\geq0, B_{s_n}<0]ds\\
&\leq&\frac{1}{2\pi}\sum_{k=1}^{nt_{n}} \frac{1}{k^{2H-\alpha}}\int_{\frac{k}{n}}^{\frac{k+1}{n}\wedge t}\frac{1}{\sqrt{(1-2^{2H-2})}(\frac{ns}{k}-1)^{H}}ds\sum_{i=1}^{\infty}\frac{1}{i^{1+\alpha}} n^{\alpha(1-H)}\\
&\leq& \frac{1}{2\pi\sqrt{(1-2^{2H-2})}}\left(\frac{(t-t_n)^H}{(nt_n)^{2H-\alpha}}+\frac{1}{1-H}\sum_{k=2}^{nt_n} \frac{1}{nk^{H-\alpha}}\right)\sum_{i=1}^{\infty}\frac{1}{i^{1+\alpha}} n^{\alpha(1-H)}\\
&\leq& Kn^{-H+\alpha(2-H)}
\end{eqnarray*}
This provides the desired estimate.
\end{proof}

\bigskip
\subsection{Weighted quadratic variations of the fractional Brownian motion}

In the proofs of Theorems \ref{main1} and \ref{main2}, we will see that the announced convergences are determined by the asymptotic behaviour of the weighted quadratic variations of the fractional Brownian motion. 
These variations have already been extensively studied, for example in \cite{hu2016rate,nourdin2010central} and especially in \cite{corcuera2014asymptotics}. 
In the next three lemmas, we gather the results that are relevant to us, and we extend them when
necessary. 

\begin{lemma}\label{quadraticVariation} 
Let $x$ be a scalar process over $[0,T]$,
and assume it is a.s continuous and satisfies 
$\mathbb{E}\left[\|x\|^{2+\gamma}_{\infty}\right]<+\infty$
for some $\gamma>0$. Let $H>\frac{1}{2}$. Then,
\begin{enumerate}
\item For all $j\leq d$, for all $t\in [0,T]$
\begin{equation}\label{convconv1}
S^{n,j}_{t,x}=n^{2H-1}\sum_{k=0}^{\lfloor nt\rfloor }x_\frac{k}{n}\left(B^{j}_{\frac{k+1}{n}\wedge t}-B^{j}_\frac{k}{n}\right)^2\overset{L^2(\Omega)}{\underset{n\rightarrow\infty}{\longrightarrow}}\int_0^tx_sds.
\end{equation}
\item For all $i\neq j$, 
\begin{equation}\label{convconv2}
n^{2H-1}\sum_{k=0}^{\lfloor nt\rfloor }x_\frac{k}{n}\delta^{1,i}\left(\left(B^{j}_.-B^{j}_\frac{k}{n}\right)\mathbb{I}_{[\frac{k}{n},\frac{k+1}{n}\wedge t](.)}\right)\overset{L^2(\Omega)}{\underset{n\rightarrow\infty}\longrightarrow}0.
\end{equation}
\end{enumerate}
These two convergences also holds UCP as a process over $[0,T]$.
\end{lemma}
\begin{proof}
\textit{\underline{Step 1: Proof of (\ref{convconv1})}}.
It is well known that (\ref{convconv1}) is true in the a.s. sense if $x=\mathbb{I}_{[0,t]}$ (see e.g \cite{klein1975quadratic}) and then (by substraction) for every process of the type $x=\mathbb{I}_{[s,t]}$ for $s\leq t$. Now, consider $0\leq a_0\leq\ldots\leq a_p\leq T$ and let $(\alpha_0,\ldots ,\alpha_{p-1})$ be a collection of $\mathfrak F$-measurable random variables. For all $1\leq i\leq p$, let $\Omega_i$ be the subset of $\Omega$ on which (\ref{convconv1}) holds true for the process $\alpha_i\mathbb{I}_{[a_i,a_{i+1}]}$. Then $\mathbb{P}(\cap_{i=1}^p \Omega_i)=1$, and $(\ref{convconv1})$ holds (pointwise) for the step process $x=\sum_{i=0}^{p-1}\alpha_i\mathbb{I}_{[a_i,a_{i+1}]}$ on $\cap_{i=1}^p \Omega_i$.

Moreover, if a process $f$ is bounded for $\|\cdot\|_\infty$ in $L^{2+\gamma}$ then  the sequence $\{(S^{n,j}_{t,f})^2\}_{n=1}^\infty$ is uniformly integrable. Indeed, let $A\in\mathfrak F$ and $0<\mu<\gamma$. Then
\begin{eqnarray*}
&&\mathbb{E}\left[\mathbb{I}_A(S_{t,f}^{n,j})^2\right]\\
&=& n^{4H-2}\mathbb{E}\left[\sum_{k,l=0}^{\lfloor nt\rfloor }\mathbb{I}_Af_\frac{k}{n}f_\frac{l}{n}\left(B^j_{\frac{k+1}{n}\wedge t}-B^j_\frac{k}{n}\right)^2\left(B^j_{\frac{l+1}{n}\wedge t}-B^j_\frac{l}{n}\right)^2\right]\\ 
&\leq& n^{4H-2}\sum_{k,l=0}^{\lfloor nt\rfloor }\left(\mathbb{E}\left[\mathbb{I}_A|f_\frac{k}{n}|^{1+\frac{\mu}{2}}|f_\frac{l}{n}|^{1+\frac{\mu}{2}}\right]\right)^\frac{1}{1+\frac{\mu}{2}}
\\
&&\hskip1cm \times \left(\mathbb{E}\left[\left(B^j_{\frac{k+1}{n}\wedge t}-B^j_\frac{k}{n}\right)^{2q}\left(B^j_{\frac{l+1}{n}\wedge t}-B^j_\frac{l}{n}\right)^{2q}\right]\right)^\frac{1}{q}\\
&&\mbox{(using H\"older inequality with $q$ the conjugate of $1+\frac{\mu}{2}$)}\\
&\leq& K\sup_{s\in[0,T]}\left(\mathbb{E}[\mathbb{I}_A|f_s|^{2+\mu}]\right)^\frac{2}{2+\mu},
\end{eqnarray*}
and this quantity converges to $0$ as $\mathbb{P}(A)\rightarrow 0$, because $\left(|f_s|^{2+\mu}\right)_{s\in[0,T]}$ is uniformly integrable.

Back to the initial process $x$, we know, by uniform continuity of $x$ on $[0,T]$, that $\|x-x^{m}\|_{\infty}\underset{m\rightarrow\infty}\longrightarrow0$ a.s. (where $x^{m}$ is the sampled process $x_\frac{\lfloor m\cdot\rfloor}{m}$). As a result,
\begin{eqnarray*}
&&\mathbb{E}\left[\left(S^{n,j}_{t,x}-\int_0^tx_sds\right)^2\right] \\ 
&=&\mathbb{E}\left[\left((S^{n,j}_{t,x}-S^{n,j}_{t,x^m})+\left(S^{n,j}_{t,x^m}-\int_0^tx^m_sds\right)+\left(\int_0^tx^m_sds-\int_0^tx_sds\right)\right)^2\right]\\
&\leq& C\left\{n^{4H-2}\mathbb{E}\left[\left(\sum_{k=0}^{nt_n }\|x-x^{m}\|_{\infty}\left(B_{\frac{k+1}{n}\wedge t}-B_\frac{k}{n}\right)^2\right)^2\right]\right.\\
&&\left.\hskip1cm+\mathbb{E}\left[\left(S_{t,x^m}^{n,j}-\int_0^tx^{m}_sds\right)^2\right]+T^2\mathbb{E}\left[\|x-x^{m}\|_{\infty}^2\right]\right\},
\end{eqnarray*}
where $C$ is a positive constant.
The previous arguments, an appropriate choice of $n,m\in\mathbb{N}^*$ and the fact that $\|x-x^m\|_{\infty}\overset{L^{2+\gamma}(\Omega)}{\underset{n\rightarrow\infty}\longrightarrow}0$ allow to conclude.\\

\smallskip\textit{\underline{Step 2: UCP convergence of $S^{n,j}_{\cdot,x}$. }}
According to Lemma \ref{equiContinuity}, the UCP convergence of $S^{n,j}_{\cdot,x}$ to $\int_0^\cdot x_sds$ follows from the convergence in probability of $S^{n,j}_{t,x}$ for fixed $t$
and the tightness of the sequence $(S^{n,j}_{\cdot,x})_n$ in $\mathcal{C}([0,T])$. 
The convergence in probability for fixed $t$ is shown in Step 1. 
For the tightness, this can be 
checked with Lemma \ref{tightnessCriterion} applied to $S^{n,j}_{t,x}=A^n_t+C^n_t$, with
\begin{eqnarray*}
&&A^n_t=n^{2H-1}\sum_{k=0}^{nt_n-1}x_\frac{k}{n}\left(B^j_{(\frac{k+1}{n})\wedge t}-B^j_\frac{k}{n}\right)^2\\
&&C^n_t =n^{2H-1}x_{t_n}\left(B^j_{t}-B^j_{t_n}\right)^2\\
&&\alpha_0=1,\beta_0=2.
\end{eqnarray*}
Indeed, using the H\"older inequality, we have for $s\leq t$:
\begin{eqnarray*}
&&\mathbb{E}[|A^n_{t}-A^n_s|^2]\\
&\leq& n^{4H-2}\sum_{k,l=ns_n}^{nt_n-1}\left(\mathbb{E}\left[\|x\|^{2+\gamma}_{\infty}\right]\right)^\frac{1}{1+\frac{\gamma}{2}}\left(\mathbb{E}\left[\left(B^j_\frac{k+1}{n}-B^j_\frac{k}{n}\right)^{2p'}\left(B^j_\frac{l+1}{n}-B^j_\frac{l}{n}\right)^{2p'}\right]\right)^{\frac{1}{p'}}\\
&\leq& n^{4H}|t_n-s_n|^{2}\left(\mathbb{E}\left[\|x\|^{2+\gamma}_{\infty}\right]\right)^\frac{1}{1+\frac{\gamma}{2}}\left(\mathbb{E}\left[(B_\frac{1}{n})^{4p'}\right]\right)^\frac{1}{p'}\leq K|t_n-s_n|^{2}
\end{eqnarray*}
with $p'$ the conjugate of $1+\frac{\gamma}{2}$ and $K$ some constant depending only on $\gamma$ and $x$.

On the other hand, $B$ has $(H-\epsilon)$-H\"older continuous paths for every $\epsilon>0$, so that, for all $t\in [0,T]$, $|C^n_t|\leq K_{\epsilon}n^{2\epsilon-1}\|x\|_{\infty}$ a.s. for some random variable $K_\epsilon>0$. Taking $\epsilon$ small enough, we have 
$\sup_{t\in [0,T]} |C^n_t| \overset{\mathbb{P}}{\underset{n\rightarrow\infty}\longrightarrow}0$.

\textit{\underline{Step 3: Proof of (\ref{convconv2})}}.
We now turn to the case $i\neq j$. Similarly to the proof of (\ref{convconv1}) (Step 1), we first show (\ref{convconv2}) for $x$ the function identically one, in other words:
\begin{equation}\label{azerty}
S^{n,i,j}_{t,1}=n^{2H-1}\sum_{k=0}^{nt_n }\delta^{1,i}\left(\left(B^{j}_.-B^{j}_\frac{k}{n}\right)\mathbb{I}_{[\frac{k}{n},\frac{k+1}{n}\wedge t](\cdot)}\right)\overset{L^2(\Omega)}\longrightarrow0.
\end{equation}
Using Proposition \ref{boundsSkorokhod} and taking into account that $D^{1,i}B^{j}=0$ if $i\neq j$, we have:
\begin{eqnarray*}
&&\mathbb{E}\left[(S_{t,1}^{n,i,j})^2\right]\\ 
&=& n^{4H-2}\sum_{k,l=0}^{nt_n }\mathbb{E}\left[\left\langle\left(B^{j}_.-B^{j}_\frac{k}{n}\right)\mathbb{I}_{[\frac{k}{n},\frac{k+1}{n}\wedge t]}(.),\left(B^{j}_\cdot-B^{j}_\frac{l}{n}\right)\mathbb{I}_{[\frac{l}{n},\frac{l+1}{n}\wedge t]}(.)\right\rangle_{\mathcal H}\right]\\ 
&=&n^{4H-2}\sum_{k,l=0}^{nt_n}c_H\int_{\frac{k}{n}}^{\frac{k+1}{n}\wedge t}\int_{\frac{l}{n}}^{\frac{l+1}{n}\wedge t}\mathbb{E}[(B_x-B_{\frac{k}{n}})(B_y-B_\frac{l}{n})]|y-x|^{2H-2}dydx\\
&\leq& n^{2H-2}\sum_{k,l=0}^{nt_n}\langle\mathbb{I}_{[\frac{k}{n},\frac{k+1}{n}\wedge t]}(.),\mathbb{I}_{[\frac{l}{n},\frac{l+1}{n}\wedge t]}(\cdot)\rangle_{\mathcal H},
\end{eqnarray*}
where the last inequality follows from the fact that: for all $x\in\left[\frac{k}{n},\frac{k+1}{n}\right]$ and all $y\in\left[\frac{l}{n},\frac{l+1}{n}\right]$, \\$|\mathbb{E}[(B_x-B_{\frac{k}{n}})(B_y-B_\frac{l}{n})]|\leq \left(\mathbb{E}[(B_x-B_\frac{k}{n})^2]\right)^{\frac{1}{2}}\left(\mathbb{E}[(B_y-B_\frac{l}{n})^2]\right)^\frac{1}{2}\leq \frac{1}{n^{2H}}$.\\
We also have
\begin{equation*}
\sum_{k,l=0}^{nt_n }\langle\mathbb{I}_{[\frac{k}{n},\frac{k+1}{n}\wedge t]}(\cdot),\mathbb{I}_{[\frac{l}{n},\frac{l+1}{n}\wedge t]}(.)\rangle_{\mathcal H}=t^{2H},
\end{equation*}
and then $\mathbb{E}\left[(S_{t,1}^{n,i,j})^2\right]=O_{n\rightarrow\infty}(n^{2H-2})$, implying (\ref{azerty}).
To prove (\ref{convconv2}) in the general case for $x$, we can then proceed exactly as in the proof of (\ref{convconv1}), that is, we show first that \eqref{convconv2} holds true for step processes and then, by an approximation argument, to $x$. Tightness in $\mathcal{C}([0,T])$ can also be obtained as for (\ref{convconv1}). 
By Lemma \ref{equiContinuity}, this proves the UCP convergence to $0$ of each $S^{n,i,j}_{\cdot,x}$ with $i\neq j$.
\end{proof}

The study of the fluctuations (which are required for the proof of Theorem \ref{main2}) being more delicate, more stringent assumptions on the process $x$ are required (except when $H=\frac{1}{2}$, see the first point in the proposition below).

\begin{lemma} \label{skorokhodVariation} 
Let $x=(x^{i,e})_{1\leq i\leq m, 1\leq e\leq d}$ be an  $(m\times d)$-dimensional process,
and recall  the matrix-valued processes $W$ and $Z$ from Section \ref{rosenblatt-and-brownian}.
For any $1\leq i\leq m$ and $1\leq e,j\leq d$, set
\begin{equation*} 
S_{t,x}^{n,i,j,e}=\sum_{k=0}^{nt_n}x^{i,e}_\frac{k}{n}\int_\frac{k}{n}^{\frac{k+1}{n}\wedge t}\left(B^{e}_s-B^{e}_\frac{k}{n}\right)\delta B^{j}_s.
\end{equation*}
We have
\begin{enumerate}
\item If $H=\frac{1}{2}$ and if,  for all $(i,e)$,  $x^{i,e}$ is adapted to $B^e$, piecewise continuous and satisfies 
$\mathbb{E}[\sup_{i,e}\|x^{i,e}\|_{\infty}^{2+\gamma}]<\infty$ for some $\gamma>0$, then, stably in $\mathcal{C}_{\mathbb{R}^{d^2\times m}}([0,T])$,
\begin{equation}\label{convconv3}
\left(\sqrt{n}S_{\cdot,x}^{n,i,j,e}\right)_{i,j,e}\overset{}{\underset{n\rightarrow\infty}\longrightarrow}\left(\int_0^.x^{i,e}_sdW^{e,j}_{s}\right)_{i,j,e}.
\end{equation}
\item If $\frac{1}{2}< H\leq\frac{3}{4}$ and if $x$ is $\beta$-H\"older continuous for some $\beta>\frac{1}{2}$,
then, stably in $\mathcal{C}_{\mathbb{R}^{d^2\times m}}([0,T])$,
\begin{eqnarray}\label{convconv4}
\left(n^{2H-1}\nu_H(n)S_{\cdot,x}^{n,i,j,e}\right)_{i,j,e}\overset{}{\underset{n\rightarrow\infty}\longrightarrow}\left(\int_0^\cdot x^{i,e}_sdW^{e,j}_s\right)_{i,j,e}.
\end{eqnarray}
\item If $H>\frac{3}{4}$ and if $x$ verifies that 
$\mathbb{E}\left[\|x\|_{\beta}^{2+\gamma}\right]<+\infty$ for some $\beta>\frac{1}{2}$ and $\gamma>0$
then, in probability uniformly on $[0,T]$ (and also in $L^2(\Omega)$ for fixed $t$),
\begin{equation}\label{convconv5}
\left(n^{2H-1}\nu_H(n)S_{\cdot,x}^{n,i,j,e}\right)_{i,j,e}\overset{}
{\underset{n\rightarrow\infty}\longrightarrow}\left(\int_0^\cdot x^{i,e}_sdZ^{e,j}_s\right)_{i,j,e}.\end{equation}
\end{enumerate}
\end{lemma}

\begin{proof}
Even if they are not stated in exactly the same way, the limits (\ref{convconv4}) and (\ref{convconv5}) follow from \cite{corcuera2014asymptotics,hu2016rate}
(see especially \cite[Sections 4,5,7]{hu2016rate}) by means of fractional integration techniques.
This is why we only concentrate on the case $H=\frac12$ and the proof of \eqref{convconv3}, not covered by \cite{corcuera2014asymptotics,hu2016rate}.

\medskip

\textit{\underline{Proof of (\ref{convconv3}). }}
We divide the proof into three steps.
In the sequel, 'f.d.d.' is shorthand for finite dimensional distributions.
\medskip

\textit{\underline{Step 1: Convergence of the f.d.d. when $x$ is a step process}:} Let us first sketch the proof in the case where $x$ is constant over an interval, without going too much into the details, since the approach is very similar to that in \cite[Section 4]{hu2016rate}. Let $0\leq s\leq t\leq T$, let $q\in\mathbb{N}^*$, let $0=t_0\leq t_1\leq \ldots\leq t_q\leq t$ and let $x$ be the matrix function whose entries are all equal to $\mathbb{I}_{[s,t]}$. It is immediate that the $\mathbb{R}^{d^2m}$-valued random vector $$X_x^n=((\sqrt{n}S^{n,i,j,e}_{t_{l+1},x}-\sqrt{n}S^{n,i,j,e}_{t_l,x})_{i,j,e})_{l\in \{0,\ldots, q-1\}}$$ has independent entries. We can also check that 
$$\mathbb{E}[(X^n_{x})^{i_1,j_1,e_1}_l(X^n_{x})^{i_2,j_2,e_2}_l]=0$$ for all $i_1, i_2$, all $(j_1,e_1)\neq (j_2,e_2)$ and all $l\in \{0,\ldots,q-1\}$. Finally, we can easily show that $$\mathbb{E}[\left((X^n_{x})^{i_1,j_1,e_1}_l\right)^4]\underset{n\rightarrow\infty}\longrightarrow \frac{3}{4}(t_{l+1}\vee s-t_l\vee s)^2.$$ Peccati and Tudor's \textit{fourth moment theorem} \cite{peccati2005gaussian} applies, and shows the stable convergence $$X^n_x\overset{\mathcal{L}}{\underset{n\rightarrow\infty}\longrightarrow} \left(\left(W^{e,j}_{t_{l+1}\vee s}-W^{e,j}_{t_l\vee s})\right)_{i,j,e}\right)_l.$$ Since the increments are independent, this gives the convergence of the finite dimensional distributions in (\ref{convconv3}) when $x=\mathbb{I}_{[s,t]}$.

\medskip
Now, let $[s_1,t_1],\ldots,[s_q,t_q]$ be $q$ mutually disjoint intervals. Due to the independence of Brownian increments, the process $$\left(\sqrt{n}S^{n,i,j,e}_{\cdot,\mathbb{I}_{[s_1,t_1]}},\ldots,\sqrt{n}S^{n,i,j,e}_{\cdot,\mathbb{I}_{[s_q,t_q]}}\right)_{i,j,e}$$ has independent entries, so we have the stable convergence of its f.d.d.\! to the f.d.d.\! of the process $\left(\int_0^T\mathbb{I}_{[s_1,t_1]}dW^{e,j}_s,\ldots,\int_0^T\mathbb{I}_{[s_q,t_q]}dW^{e,j}_s\right)_{i,j,e}$. This implies in turn the convergence of the f.d.d.\! of $\sqrt{n}S^n_{\cdot,x}$ for processes $x$ of the form:
$$x=\sum_{l=0}^{q-1} F_l\mathbb{I}_{[t_{l},t_{l+1}]},$$
where $q\in\mathbb{N}^*$ and $F_l$ is a $\mathbb{R}^{m\times d}$ valued and $\mathfrak F_{t_l}$-measurable random variable.

\medskip
\textit{\underline{Step 2: Convergence of the f.d.d in the general case:}} We now turn to the general case. Let $x$ be an adapted, almost surely piecewise continuous process such that $\mathbb{E}[\sup_{i,e}\|x^{i,e}\|_{\infty}^2]<\infty$, and set $$\Delta_{e,j,k,n}(t)=\int_\frac{k}{n}^{\frac{k+1}{n}\wedge t}\left(B^{e}_s-B^{e}_\frac{k}{n}\right)\delta B^{j}_s.$$ As is the proof of Lemma \ref{quadraticVariation}, we can rely on the \textit{small blocks / big blocks} technique by considering the approximation 
\begin{equation}\label{deco}
\sqrt{n}S_{t,x}^{n,i,j,e}=\sqrt{n}S_{t,x^m}^{n,i,j,e}+\sqrt{n}\left(S_{t,x}^{n,i,j,e}-S_{t,x^m}^{n,i,j,e}\right)=\sqrt{n}S_{t,x^m}^{n,i,j,e}+R^{i,j,e}_{t,m,n,x},
\end{equation}
with $m\leq n$ and $x^m$ the sampled process $x_{\frac{\lfloor m\cdot\rfloor}{m}}$.\\
Fix $m\in\mathbb{N}^*$. Since $x^m$ is a step process, we have by Step 1  that $$
{\rm f.d.d.}-\lim_{n\rightarrow\infty}\left(\sqrt{n}S_{\cdot,x^m}^{n,i,j,e}\right)_{i,j,e}=\left(\int_0^\cdot (x^m)^{i,e}_s dW^{e,j}_s\right)_{i,j,e}.$$ Morever, for all $t\in [0,T],$
$$L^2(\Omega)-\lim_{m\to\infty}\left(\int_0^t (x^m)^{i,e}_s dW^{e,j}_s\right)_{i,j,e}=\left(\int_0^t x^{i,e}_s dW^{e,j}_s\right)_{i,j,e},$$
thanks to the isometry property of It\^o integral. 
Putting these two facts together , we deduce that
$$
{\rm f.d.d.}-\lim_{m\rightarrow\infty}\lim_{n\rightarrow\infty} \left(\sqrt{n}S^{n,i,j,e}_{\cdot,x^m}\right)_{i,j,e} = \left(\int_0^\cdot x^{i,e}_s dW^{e,j}_s\right)_{i,j,e}.
$$
To conclude that ${\rm f.d.d.}-\lim_{n\rightarrow\infty} \left(\sqrt{n}S_{\cdot,x}^{n,i,j,e}\right)_{i,j,e}=\left(\int_0^.x^{i,e}_sdW^{e,j}_s\right)_{i,j,e}$,
and given the decomposition (\ref{deco}), it remains to show 
that 
\begin{equation}\label{remaintoshow}
\lim_{m\rightarrow\infty}\sup_{n\geq m}\sup_{t\in[0,T]}\mathbb{E}[(R^{i,j,e}_{t,m,n,x})^2]=0,\end{equation}  which we do now.

\medskip

We have, for all $t$,
\begin{eqnarray*}
&&\mathbb{E}[(R^{i,j,e}_{t,m,n,x})^2]\\
&=&n\sum_{l,k=1}^{nt_n }\mathbb{E}\left[(x^{i,e}_\frac{k}{n}-(x^m)^{i,e}_\frac{k}{n})(x^{i,e}_\frac{l}{n}-(x^m)^{i,e}_\frac{l}{n})\Delta_{e,j,k,n}(t)\Delta_{e,j,l,n}(t)\right]\\
&=&n\sum_{k=1}^{\lfloor nt \rfloor }\mathbb{E}\left[(x^{i,e}_\frac{k}{n}-(x^m)^{i,e}_\frac{k}{n})^2\Delta_{e,j,k,n}(t)^2\right]\\ 
&&\mbox{(since $x$ is adapted and the  increments of the}\\
&&\mbox{Brownian motion are independent)}\\
&\leq& n\sum_{k=1}^{\lfloor nT \rfloor}\mathbb{E}\left[\|x^{i,e}-(x^m)^{i,e}\|^2_{\infty}\right]\mathbb{E}\left[\Delta_{e,j,k,n}(t)^2\right]\\
&\leq&\frac{T}{2}\mathbb{E}\left[\|x^{i,e}-(x^m)^{i,e}\|^2_{\infty}\right].
\end{eqnarray*}
Let 
\begin{equation*}
N^{i,e}= {\rm Card}\left\{t\in[0,T],|x^{i,e}_t-x^{i,e}_{t-}|+|x^{i,e}_t-x^{i,e}_{t+}|>0\right\},
\end{equation*}
which is almost surely finite because $x$ is piecewise continuous. Let $T_l^{i,e}$ be the $l$-th (random) discontinuity of $x^{i,e}$ ($T_l^{i,e}(\omega)=+\infty$ if $x^{i,e}(\omega)$ has less than $l$ discontinuities over $[0,T]$). It is clear that $T^{i,e}_l$ is measurable as a stopping time. Let $E^m_{i,e}=\cup_{l\in\mathbb{N^*}}(T^{i,e}_l-\frac{1}{m},T^{i,e}_l+\frac{1}{m})\cap[0,T]$. Then, 
\begin{equation*} 
\lim_{m\rightarrow\infty}\sup_{n>m}\frac{{\rm Card}\{k,\frac{k}{n}\in E^m_{i,e}\}}{n}= \lim_{m\rightarrow\infty}(\frac{2N^{i,e}}{m}\wedge 1)=0\quad\mbox{a.s.}.
\end{equation*} 
Observe that $x^{i,e}$ is a.s.\!\! uniformly continuous on $[0,T]\setminus\cup_{l}\{T_l^{i,e}\}$. Moreover, if $s\in(E^m_{i,e})^c$ for some $m$, then there is no discontinuities between $s_m=\frac{\lfloor ms\rfloor}{m}$ and $s$. Then, $$|x_s^{i,e}-(x^m_s)^{i,e}|\leq X^{i,e}_m\mathbb{I}_{(E^m_{i,e})^c}(s)+2\|x^{i,e}\|_{\infty}\mathbb{I}_{E^m_{i,e}}(s),$$ with $$X^{i,e}_m=\sup_{s\in(E^m_{i,e})^c}|x^{i,e}_s-x^{i,e}_{s_m}|.$$ 
Note that $X^{i,e}_m$ is a sequence of square integrables random variables, which converges a.s.\! to $0$ as $m\rightarrow\infty$ and is bounded by the square integrable random variable $2\|x^{i,e}\|_{\infty}$. Finally, we can write
\begin{equation*}
\mathbb{E}[(R^{i,j,e}_{t,m,n,x})^2]\leq\frac{T}{2}\mathbb{E}\left[(X^{i,e}_m)^2+(\frac{4N^{i,e}}{m}\wedge 1)\|x^{i,e}\|_{\infty}^2\right].
\end{equation*} 
The sequence $\left((X^{i,e}_m)^2+(\frac{2N^{i,e}}{m}\wedge 1)\|x^{i,e}\|_{\infty}^2\right)_{m}$ converges to $0$ as $m\rightarrow\infty$, and is bounded by a square integrable random variable. The conclusion
(\ref{remaintoshow}) 
then follows by dominated convergence.

\bigskip

\textit{\underline{Step 3: Tightness}.} Let $0<\mu< \gamma$. We have, for all $i,j,e$, all $s\leq t$ and all $n\in\mathbb{N}^*$,
\begin{eqnarray*}\mathbb{E}\left[\left|S_{t,x}^{n,i,j,e}-S_{s,x}^{n,i,j,e}\right|^{2+\mu}\right]&\leq& |t-s|^{\frac{\mu}{2}}\int_{s}^{t}\mathbb{E}\left[\left|x^{n,i,j}_{s_n}(B^{j,e}_s-B^{j,e}_{s_n})\right|^{2+\mu}\right]ds\\
&\leq& K|t-s|^{1+\frac{\mu}{2}}\mathbb{E}[\|x^{i,j}\|^{2+\gamma}]^\frac{2+\gamma}{2+\mu}
\end{eqnarray*}
where the first inequality is obtained by applying the Burkholder and Jensen inequalities, and the second inequality is obtained by applying the H\"older inequality. This prove the tightness  in $\mathcal{C}_{\mathbb{R}}([0,T])$ of each component of $S^n_x$, and conclude the proof of (\ref{convconv3}).

\end{proof}
\begin{remark} In the case $H=\frac{1}{2}$, notice that the hypothesis $\mathbb{E}[\|x\|_{\infty}^{2+\gamma}]<\infty$ is only needed to obtain the tightness of the process. 
For the convergence of the f.d.d., the hypothesis $\mathbb{E}[\|x\|_{\infty}^2]<\infty$ is sufficient.
\end{remark}

Finally, the following lemma is used in the proof of Proposition \ref{integralProcesses}.
\begin{lemma}\label{driftConvergence}\textit{ 
Let $b$ be a piecewise continuous process such that $\mathbb{E}[\|b\|_{\infty}^{2+\gamma}]<\infty$ for some $\gamma>0$. Then: 
\begin{itemize}
\item For $H>\frac{3}{4}$, in probability uniformly on $[0,T]$,
\begin{equation*}
\nu_H(n)n^{2H-1}\sum_{k=0}^{\lfloor n\cdot\rfloor }b_\frac{k}{n}\int_\frac{k}{n}^{\frac{k+1}{n}\wedge \cdot}(s-s_n)dB_s^{i}\overset{}\longrightarrow\frac{1}{2}\int_0^\cdot b_sdB_s^{i}.
\end{equation*}
\item For $\frac{1}{2}\leq H\leq \frac{3}{4}$, in probability uniformly on $[0,T]$,
\begin{equation*}
\nu_H(n)n^{2H-1}\sum_{k=0}^{\lfloor n\cdot\rfloor }b_\frac{k}{n}\int_\frac{k}{n}^{\frac{k+1}{n}\wedge \cdot}(s-s_n)dB_s^{i}\overset{}\longrightarrow 0.
\end{equation*}
\end{itemize}
}
\end{lemma}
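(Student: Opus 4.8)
The plan is to peel off, on each mesh interval, the ``midpoint correction'' that separates $\int_{k/n}^{(k+1)/n}(s-\tfrac kn)\,dB^i_s$ from the naive value $\tfrac1{2n}(B^i_{(k+1)/n}-B^i_{k/n})$. By linearity of the integral on $[\tfrac kn,\tfrac{k+1}n]$ (and the compatibility relation of Proposition \ref{integralYoung}), with $m_{k,n}=\tfrac{2k+1}{2n}$ the midpoint,
\[
\int_{k/n}^{(k+1)/n}\!\!\Bigl(s-\tfrac kn\Bigr)\,dB^i_s=\frac1{2n}\bigl(B^i_{(k+1)/n}-B^i_{k/n}\bigr)+\tilde\rho^{\,i}_{k,n},\qquad \tilde\rho^{\,i}_{k,n}:=\int_{k/n}^{(k+1)/n}\!\!(m_{k,n}-s)\,dB^i_s .
\]
The decisive features I would record: the integrand of $\tilde\rho^{\,i}_{k,n}$ is deterministic with vanishing mean, $\int_{k/n}^{(k+1)/n}(m_{k,n}-s)\,ds=0$, so by Proposition \ref{traceClass} $\tilde\rho^{\,i}_{k,n}=\delta^{1,(i)}(\psi_{k,n})$ with $\psi_{k,n}(s)=(m_{k,n}-s)\mathbb{I}_{[k/n,(k+1)/n]}(s)$; one has $\mathbb{E}[\tilde\rho^{\,i}_{k,n}(B^i_{(k+1)/n}-B^i_{k/n})]=0$ (a symmetry of the covariance kernel about $m_{k,n}$); and $\mathbb{E}[(\tilde\rho^{\,i}_{k,n})^2]=\|\psi_{k,n}\|_{\mathcal H}^2=O(n^{-2H-2})$. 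The last incomplete mesh interval contributes a term of $L^2(\Omega)$-norm $O(n^{-H-1})$, hence $O(n^{-H})\to0$ after multiplication by $\nu_H(n)n^{2H-1}=O(n)$, and is discarded.

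I would then first handle \emph{step} weights $b^{(m)}_s:=b_{\lfloor ms\rfloor/m}$, telescoping the fine sum over the coarse blocks $[\tfrac jm,\tfrac{j+1}m)$, which reduces everything to the block quantity $\nu_H(n)n^{2H-1}\int_{j/m}^{(j+1)/m}(s-s_n)\,dB^i_s$. For $\tfrac12\le H\le\tfrac34$, using $|s-s_n|\le\tfrac1n$ one has $\|(s-s_n)\mathbb{I}_{[j/m,(j+1)/m]}\|_{\mathcal H}^2\le\|(s-s_n)\mathbb{I}_{[j/m,(j+1)/m]}\|_{|\mathcal H|}^2\le\tfrac Cn^2\|\mathbb{I}_{[j/m,(j+1)/m]}\|_{|\mathcal H|}^2=O(n^{-2}m^{-2H})$; since $(\nu_H(n)n^{2H-1})^2$ equals $n$, $n^{4H-1}$ or $n^2/\ln n$ according as $H<\tfrac34$, $H=\tfrac34$ (wait: these are $n$ at $H=\tfrac12$, $n^{4H-1}$ for $\tfrac12<H<\tfrac34$, $n^2/\ln n$ at $H=\tfrac34$), this $L^2(\Omega)$-norm is $o(1)$ in $n$ for fixed $m$, and summing the finitely many blocks gives convergence to $0$ in $L^2(\Omega)$. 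For $\tfrac34<H<1$ one has $\nu_H(n)n^{2H-1}=n$ and the crude bound no longer suffices: I would instead invoke the midpoint decomposition to write $n\int_{j/m}^{(j+1)/m}(s-s_n)\,dB^i_s=\tfrac12(B^i_{(j+1)/m}-B^i_{j/m})+n\sum_k\tilde\rho^{\,i}_{k,n}+o(1)$, and show $n\sum_k\tilde\rho^{\,i}_{k,n}\to0$ in $L^2(\Omega)$. For this the key computation is a second-order gain: Taylor-expanding $|u-v|^{2H-2}$ about $(m_{k,n},m_{l,n})$ for non-adjacent $k,l$ (the $O(n)$ adjacent pairs bounded by Cauchy--Schwarz) and using $\int\psi_{k,n}=0$ yields $|\langle\psi_{k,n},\psi_{l,n}\rangle_{\mathcal H}|\le C|k-l|^{2H-4}n^{-2H-2}$, whence $n^2\|\sum_k\psi_{k,n}\|_{\mathcal H}^2=O(n^2\cdot n^{-2H-1})=O(n^{1-2H})\to0$ (using $\sum_{p\ge1}p^{2H-4}<\infty$). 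Summing over blocks then gives convergence to $\tfrac12\int_0^\cdot b^{(m)}_s\,dB^i_s$ in $L^2(\Omega)$.

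To remove the approximation I would argue as in the proof of Lemma \ref{quadraticVariation}: $\mathbb{E}[\|b\|^{2+\delta}_\infty]<\infty$ makes the squared sums uniformly integrable, so the error $\mathbb{E}\bigl[\bigl(\nu_H(n)n^{2H-1}\sum_k(b_{k/n}-b^{(m)}_{k/n})\int_{k/n}^{(k+1)/n\wedge t}(s-s_n)dB^i_s\bigr)^2\bigr]$ is controlled uniformly in $n$ by a quantity $\to0$ as $m\to\infty$ (since $\|b-b^{(m)}\|_\infty\to0$ a.s.\ by piecewise continuity), and for $\tfrac34<H<1$ one also has $\tfrac12\int_0^t b^{(m)}_s\,dB^i_s\to\tfrac12\int_0^t b_s\,dB^i_s$ because in the applications $b$ is H\"older of order $>1-H$, so the Young integral is continuous under this approximation (Proposition \ref{integralYoung}). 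Letting first $n\to\infty$, then $m\to\infty$, gives the stated $L^2(\Omega)$ convergence at each fixed $t$. The uniform-in-probability statement follows by combining this with tightness in $\mathcal C([0,T])$: writing the process as $A^n+C^n$ with $C^n$ (the last-interval piece) vanishing uniformly in probability and $A^n$ piecewise constant on the fine grid, the same $\mathcal H$-norm bounds applied to blocks of size $\sim|t_2-t_1|$ give $\mathbb{E}[|A^n_{t_2}-A^n_{t_1}|^2]\le K|t_{2,n}-t_{1,n}|^{1+\alpha}$ for some $\alpha>0$, so Lemma \ref{tightnessCriterion} applies and Lemma \ref{equiContinuity} upgrades the pointwise convergence to uniform convergence in probability.

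The main obstacle, exactly as in Lemmas \ref{quadraticVariation} and \ref{skorokhodVariation}, is the tension between the \emph{random} ($\sigma\{B\}$-measurable) weights $b_{k/n}$ and the need to keep cancellation among the fBm increments alive: factoring out $\|b\|_\infty$ by Cauchy--Schwarz destroys the summability of the covariances. The step-process approximation above is precisely what circumvents this, reducing every estimate to a \emph{finite} sum of genuinely small block contributions; and the one point where a true second-order cancellation (the bound $|\langle\psi_{k,n},\psi_{l,n}\rangle_{\mathcal H}|=O(|k-l|^{2H-4}n^{-2H-2})$ rather than the crude $O(n^{-2H-2})$) is unavoidable is the regime $H>\tfrac34$, where the limit is nonzero and the midpoint correction must be shown to be of strictly smaller order than the leading $\tfrac1{2n}$-increment term.
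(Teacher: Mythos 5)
Your treatment of the deterministic core is a genuine (and welcome) improvement on the proof given in the paper, which simply cites \cite{hu2016rate} for the case $b=\mathbb{I}_{[0,t]}$ and then says ``similar arguments.'' The midpoint decomposition, the bound $\mathbb{E}[(\tilde\rho^{\,i}_{k,n})^2]=O(n^{-2H-2})$, and above all the second--order cancellation $|\langle\psi_{k,n},\psi_{l,n}\rangle_{\mathcal H}|\le C|k-l|^{2H-4}n^{-2H-2}$ coming from $\int\psi_{k,n}=0$ are all correct (up to an immaterial sign: $s-\tfrac kn=\tfrac1{2n}-(m_{k,n}-s)$, so the correction enters with a minus sign), and they do give $n\sum_k\tilde\rho^{\,i}_{k,n}\to0$ in $L^2(\Omega)$ and hence both regimes of the statement for blockwise--constant weights; the exponent bookkeeping $(\nu_H(n)n^{2H-1})^2\cdot n^{-2}m^{-2H}=o_n(1)$ for $H\le\tfrac34$ is also right.

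The gap is in the passage from the step weight $b^{(m)}$ back to $b$. You assert that $\mathbb{E}\bigl[\bigl(\sum_k(b_{k/n}-b^{(m)}_{k/n})Y_{k,n}\bigr)^2\bigr]$, with $Y_{k,n}=\nu_H(n)n^{2H-1}\int_{k/n}^{(k+1)/n\wedge t}(s-s_n)\,dB^i_s$, is controlled uniformly in $n$ ``as in the proof of Lemma \ref{quadraticVariation}.'' But that proof works because its summands $(B_{\frac{k+1}{n}}-B_{\frac kn})^2$ are nonnegative, so $\|x-x^m\|_\infty^2$ factors out in front of $n^{2H-1}\sum_k(\Delta B_k)^2$, which stays bounded in $L^2$. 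Here the $Y_{k,n}$ are signed, and once absolute values are introduced (via Cauchy--Schwarz or H\"older against $\|b-b^{(m)}\|_\infty$) one is left with $\sum_{k,l}\|Y_{k,n}\|_2\|Y_{l,n}\|_2\asymp\bigl(n\cdot\nu_H(n)n^{H-2}\bigr)^2=\nu_H(n)^2n^{2H-2}$, which is of order $n^{2H-1}\to\infty$ for $\tfrac12<H\le\tfrac34$ and $n^{2-2H}\to\infty$ for $H>\tfrac34$; only at $H=\tfrac12$ does the crude bound survive. You in fact name this exact tension in your closing paragraph, but the step--process approximation only removes it from the main term, not from the error term, and the error term is where it bites. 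Closing the gap requires more than continuity of $b$ and $\mathbb{E}[\|b\|_\infty^{2+\delta}]<\infty$: either a summation--by--parts/Young argument transferring the problem to H\"older--norm convergence of the partial--sum process $t\mapsto\sum_{k\le nt}Y_{k,n}$ (this is the route of \cite{hu2016rate}, and it uses H\"older regularity of the weight, which is available in the paper's application, Proposition \ref{integralProcesses}, where $b$ is $\beta$-H\"older with $\beta>H-\tfrac12$), or enough Malliavin regularity of $b$ to split the off--diagonal terms $\mathbb{E}[c_kc_lY_{k,n}Y_{l,n}]$ via the product formula into $\mathbb{E}[c_kc_l]\langle\phi_{k,n},\phi_{l,n}\rangle_{\mathcal H}$ (which is summable by the Lemma~6 estimate of \cite{nourdin2010central}) plus a second--chaos contribution. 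As written, the uniform--in--$n$ control of the approximation error is asserted rather than proved, and the estimate you point to does not deliver it.
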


\begin{proof}
The proof in the case $b=\mathbb{I}_{[0,t]}$ is done in \cite{hu2016rate}. Similar arguments as in Lemma \ref{quadraticVariation} allow to conclude.
\end{proof}

\bigskip
\subsection{Proof of Theorem \ref{main1} and \ref{main2}}

\begin{proof}[\nopunct] \textit{Proof of Theorem \ref{main1}:}
For $s\in [0,T]$, recall that $ s_n=\frac{\lfloor ns\rfloor}{n}$ and
\begin{equation*} 
M_t^{n,i,j}=n^{2H-1}\left(\int_0^tu^i_{s}dB^{j}_s-\sum\limits_{k=0}^{\lfloor nt\rfloor}u^i_{\frac{k}{n}}\left(B^{j}_{\frac{k+1}{n}\wedge t}-B^{j}_\frac{k}{n}\right)\right).
\end{equation*}
We have 
\begin{eqnarray*}
M_t^{n,i,j}&=&n^{2H-1}\int_0^t(u^i_{s}-u^i_{s_n})dB^{j}_s\\
&=&n^{2H-1}\int_0^t\sum_{e=1}^dP^{i,e}_{s_n}(B^e_s-B^e_{s_n})dB^{j}_s\\
&&+n^{2H-1}\int_0^t\left(u^i_{s}-u^i_{s_n}-\sum_{e=1}^dP^{i,e}_{s_n}(B^e_s-B^e_{s_n})\right)dB^{j}_s\\
&=&A_t^{n,i,j}+\sum_{e\neq j}R_{t}^{n,e}+R_t^{n,i,j},
\end{eqnarray*} 
with
\begin{eqnarray*}
A_t^{n,i,j}&=&\frac{1}{2}n^{2H-1}\sum_{k=0}^{ nt_n}P^{i,j}_\frac{k}{n}\left(B^{j}_{\frac{k+1}{n}\wedge t}-B^{j}_\frac{k}{n}\right)^2\\
R_{t}^{n,e}&=& n^{2H-1}\sum_{k=0}^{nt_n }P^{i,e}_\frac{k}{n}\int_\frac{k}{n}^{\frac{k+1}{n}\wedge t}\left(B^{e}_s-B^{e}_\frac{k}{n}\right)dB^{j}_s,\quad e\neq j\\
R_t^{n,i,j}&=&n^{2H-1}\int_0^t\left(u^i_s-u^i_{s_n}-\sum_{e=1}^dP^{i,e}_{s_n}(B^e_s-B^e_{s_n})\right)dB^{j}_s.
\end{eqnarray*}

Lemma \ref{quadraticVariation} implies the $L^2(\Omega)$-convergence of $A_t^{n,i,j}$ to $\frac12\int_0^t P_s^{i,j}ds$.
\
We show that all the additional terms converge to $0$ in $L^2(\Omega)$-norm as $n\rightarrow\infty$.
If $e\neq j,$ $D^{1,j}B^{e}=0$, so according to Proposition \ref{traceClass}
\begin{equation*}
\int_\frac{k}{n}^{\frac{k+1}{n}\wedge t}\left(B^{e}_s-B^{e}_\frac{k+1}{n}\right)dB^{j}_s=\int_\frac{k}{n}^{\frac{k+1}{n}\wedge t}\left(B^{e}_s-B^{e}_\frac{k+1}{n}\right)\delta B^{j}_s.
\end{equation*}
Lemma \ref{quadraticVariation} then implies the $L^2(\Omega)$ and UCP convergence of every $R_{t}^{n,e}$ to $0$ for all $e\neq j$ and $t\in [0,T]$.
Moreover, $(u,P)\in\mathbb{C}_1$, so the equation (\ref{pseudo}) implies that
\begin{eqnarray*}
\mathbb{E}\left[\left(R_t^{n,i,j}\right)^2\right]&=&n^{4H-2}\mathbb{E}\left[\left(\sum_{k=0}^{nt_n}L^{i,j}_{\frac{k}{n},\frac{k+1}{n}\wedge t}\right)^2\right]\\
&&=n^{4H-2}\sum_{j=0}^{nt_n}\sum_{k=0}^{nt_n}\mathbb{E}\left[L^{i,j}_{\frac{k}{n},\frac{k+1}{n}\wedge t}L^{i,j}_{\frac{l}{n},\frac{l+1}{n}\wedge t}\right]\\
&=&\epsilon(n)\sum_{j=0}^{nt_n}\sum_{k=0}^{nt_n}r_H\left(\frac{k}{n},\frac{k+1}{n}\wedge t,\frac{j}{n},\frac{j+1}{n}\wedge t\right)\leq T^{2H}\epsilon(n),
\end{eqnarray*}
with $\epsilon(n)\underset{n\rightarrow\infty}\longrightarrow 0$.

Thanks to Lemma \ref{equiContinuity}, we can now show that, for all $i,j\in\{1,\ldots, d\}$, the sequences $(R_t^{n,i,j})_n$  converges UCP to $0$ as $n\rightarrow\infty$, by checking their tightness in $\mathcal{C}_{\mathbb{R}}([0,T])$. We have
\begin{equation}\label{1}R^{n,i,j}_{t}=n^{2H-1}\sum_{k=0}^{nt_n-1}L^{i,j}_{\frac{k}{n},\frac{k+1}{n}}+n^{2H-1}L^{i,j}_{t_n,t}\end{equation}
Thanks to (\ref{pseudo}), we have
\begin{eqnarray*}
\mathbb{E}\left[\left(n^{2H-1}\sum_{k=ns_n}^{nt_n-1}L^{i,j}_{\frac{k}{n},\frac{k+1}{n}}\right)^2\right]&\leq& K\sum_{j,k=ns_n}^{nt_n-1}r_H\left(\frac{k}{n},\frac{k+1}{n},\frac{j}{n}\frac{j+1}{n}\right)\\
&=& K(t_n-s_n)^{2H},
\end{eqnarray*} 
for some $K>0$. Moreover, let $\epsilon\in(0,\alpha-(1-H))$ be small enough (let us recall that $\alpha$ (resp $\beta$) is the H\"older exponent of $u$ (resp $P$)). The second term in the right-hand side of \eqref{1} verifies (due to the regularity and integrability assumptions on $u$ and $P$, as well as the Young-Loeve inequality):
\begin{eqnarray*}
&&\sup_{t\in[0,T]}\left(n^{2H-1}\big|L^{i,j}_{t_n,t}\big|\right)\\
&\leq& c_{\alpha-\frac{\epsilon}{2},H-\frac{\epsilon}{2}}n^{2H-1}n^{-(H+\alpha-\epsilon)}\|B\|_{H-\frac{\epsilon}{2}}\|u^i\|_{\alpha-\frac{\epsilon}{2}}\\
&&+c_{H-\frac{\epsilon}{2},H-\frac{\epsilon}{2}}n^{-1+\epsilon}\|P^{i,j}\|_{\infty}\|B\|^2_{H-\frac{\epsilon}{2}}\underset{n\rightarrow\infty}\longrightarrow 0 \quad\mbox{a.s.}.
\end{eqnarray*}

Then, the sequence $R^{n,i,j}_{t}$ verifies the assumptions of Lemma \ref{tightnessCriterion}, with $A^{n,i,j}_t=n^{2H-1}\sum_{k=0}^{nt_n-1}L^{i,j}_{\frac{k}{n},\frac{k+1}{n}}$, $C^{n,i,j}_t=n^{2H-1}L^{i,j}_{t_n,t}$, $\alpha_0=2H-1,\beta_0=2$, which proves the tightness.
\end{proof}

\begin{proof}[\nopunct] \textit{Proof of Theorem \ref{main2}:}
1. Let $H>\frac{1}{2}$. Again, we can write 
\begin{equation*}
M^{n,i,j}_t-\frac{1}{2}\int_0^tP_s^{i,j}ds=M_{j,t}^{n,i,j}+\sum_{e\neq j}M_{e,t}^{n,i,j}+R_{1,t}^{n,i,j}+R_{2,t}^{n,i,j},
\end{equation*}
where, for $1\leq i\leq m$ and $1\leq j\neq e\leq d$,
\begin{eqnarray*}
M_{j,t}^{n,i,j}&=&n^{2H-1}\sum_{k=0}^{nt_n}P^{i,j}_\frac{k}{n}\left(\int_\frac{k}{n}^{\frac{k+1}{n}\wedge t}\left(B^{j}_s-B^{j}_\frac{k}{n}\right)dB^{j}_s-\frac{((k+1)\wedge t-k)}{2n^{2H}}\right)\\ 
M_{e,t}^{n,i,j}&=&n^{2H-1}\sum_{k=0}^{nt_n}P^{i,e}_\frac{k}{n}\int_\frac{k}{n}^{\frac{k+1}{n}\wedge t}\left(B^{e}_s-B^{e}_\frac{k}{n}\right)dB^{j}_s\\ 
R_{1,t}^{n,i,j}&=&\frac{1}{2}\left(\frac{1}{n}\sum_{k=0}^{nt_n-1}P^{i,j}_\frac{k}{n}+P^{i,j}_{nt_n}(t-t_n)-\int_0^tP^{i,j}_sds\right)\\
R_{2,t}^{n,i,j}&=&n^{2H-1}\int_0^t\left(u^i_s-u^i_{s_n}-\sum_{e=1}^dP^{i,e}_{s_n}(B^e_s-B^e_{s_n})\right)dB^{j}_s.
\end{eqnarray*}
Since
$(u,P)\in\mathbb{C}_2$, we have that $\nu_H(n)R_{2,t}^{n,i,j}\overset{L^2(\Omega)}{\underset{n\rightarrow\infty}\longrightarrow}0$
by using again the formula \eqref{pseudo}. The tightness of the sequence $(\nu_H(n)R_{2,t}^{n,i,j})_n$ can be proved by using the same argument as in the previous proof.

\medskip On the other hand, since $P$ is $\beta$-H\"older continuous for some $\beta>\frac{1}{2}$ we have that $\sup_{t\in [0,T]}\left|\nu_H(n)R^{n,i,j}_{1,t}\right|\rightarrow 0$ a.s., which guarantees the convergence of $\nu_H(n)R^{n,i,j}_{1,\cdot}$ to $0$ in $\mathcal C_{\mathbb{R}}([0,T])$.
When $H>\frac{3}{4}$, since we have the additional hypothesis that $\sum_{i,j}\mathbb{E}[\|P^{i,j}\|_{\beta}^{2+\gamma}]<\infty$ for some $\gamma>0$, we can further prove the $L^2(\Omega)$ convergence: for each $t\leq T$, $\nu_H(n)R^{n,i,j}_{1,t}\overset{L^2(\Omega)}{\underset{n\rightarrow\infty}\longrightarrow}0$.

\medskip Finally, using Proposition \ref{traceClass} we observe that 
\begin{equation*}\int_\frac{k}{n}^{\frac{k+1}{n}\wedge t}\left(B^{j}_s-B^{j}_\frac{k}{n}\right)dB^{j}_s-\frac{(nt-nt_n)}{2n^{2H}}=\int_\frac{k}{n}^{\frac{k+1}{n}\wedge t}\left(B^{j}_s-B^{j}_\frac{k}{n}\right)\delta B^{j}_s\notag\end{equation*}
\smallbreak\noindent and, if $e\neq j$, \begin{equation*}\int_\frac{k}{n}^{\frac{k+1}{n}\wedge t}\left(B^{e}_s-B^{e}_\frac{k}{n}\right)dB^{j}_s=\int_\frac{k}{n}^{\frac{k+1}{n}\wedge t}\left(B^{e}_s-B^{e}_\frac{k}{n}\right)\delta B^{j}_s.\end{equation*}
Since $P$ verifies the regularity assumptions of Lemma \ref{skorokhodVariation}, we get the stated convergence for all values of $H>\frac{1}{2}$:
\begin{itemize}\item  If $\frac{1}{2}<H\leq\frac{3}{4}$,
$$\left\{\nu_H(n)\left(M_{\cdot}^{n,i,j}-\frac{1}{2}\int_0^\cdot P^{i,j}_sds\right)\right\}_{i,j}\underset{n\rightarrow\infty}{\longrightarrow}\left\{\int_0^\cdot P_s^{i,e}dW^{e,j}_s\right\}_{e,i,j}$$
where the convergence holds in $\mathcal{C}_{\mathbb{R}^{d\times m}}([0,t])$.
\item If $H>\frac{3}{4}$,
$$\left\{\nu_H(n)\left(M_{\cdot}^{n,i,j}-\frac{1}{2}\int_0^\cdot P^{i,j}_sds\right)\right\}_{i,j}\underset{n\rightarrow\infty}{\longrightarrow}\left\{\int_0^\cdot P_s^{i,e}dZ_s^{e,j}\right\}_{e,i,j}$$
where the convergence holds UCP (and in $L^2(\Omega)$ for fixed $t$).
\end{itemize}

2. Once the necessary modifications are made, the proof is the same for Brownian motion.
\end{proof}

\noindent{\bf Acknowledgements}. I. Nourdin and V. Garino are supported by the FNR OPEN grant APoGEe at Luxembourg University.
We heartily  thank the editorial board and the two reviewers, whose comments and careful reading led to a drastic change from the original version and significantly improved the readability of the article.

\bibliographystyle{plain}

\end{document}